\pgfplotsset{compat=1.15}
 \newtheorem{theorem}{Theorem}[section]
 \newtheorem{corollary}[theorem]{Corollary}
 \newtheorem{lemma}[theorem]{Lemma}
 \newtheorem{question}[theorem]{Question}
   \theoremstyle{definition}
 \newcommand{\CR}{\textrm{\footnotesize CR}}
\title{A note on the width of sparse random graphs}
\author{Tuan Anh Do, Joshua Erde, and Mihyun Kang}
\address{Institute of Discrete Mathematics\\
Graz University of Technology\\
Steyrergasse 30\\
8010 Graz\\
Austria.\\}
\email{\{do,erde,kang\}@math.tugraz.at.}
\keywords{random graph, tree-width, rank-width, graph expansion}
\begin{document}
\begin{abstract}
In this note, we consider the width of a supercritical random graph according to some commonly studied width measures. We give short, direct proofs of results of Lee, Lee and Oum, and of Perarnau and Serra, on the rank- and tree-width of the random graph $G(n,p)$ when $p= \frac{1+\epsilon}{n}$ for $\epsilon > 0$ constant. Our proofs avoid the use of black box results on the expansion properties of the giant component in this regime, and so as a further benefit we obtain explicit bounds on the dependence of these results on $\epsilon$. Finally, we also consider the width of the random graph in the \emph{weakly supercritical regime}, where $\epsilon = o(1)$ and $\epsilon^3n \to \infty$. In this regime, we determine, up to a constant multiplicative factor, the rank- and tree-width of $G(n,p)$ as a function of $n$ and $\epsilon$.
\end{abstract}
\maketitle
\section{Introduction}
\subsection{Background and motivation}
The \emph{tree-width} of a graph $G$, which we denote by $\textrm{tw}(G)$, is a parameter which broadly measures how similar the global structure of $G$ is to that of a tree, via the existence of a \emph{tree-decomposition}, which displays the structure of $G$ in a tree-like fashion. Tree-decompositions were originally studied by Halin \cite{Halin}, and later independently by Robertson and Seymour as an integral tool in their proof of Wagner's conjecture \cite{RobertsonXX}, and have since then been the subject of much study.

Beyond their usefulness as a tool in structural graph theory, tree-decompositions have also turned out to be important in the field of computational complexity. Many problems which are computationally hard on general graphs can be solved in polynomial time on graphs of bounded tree-width (see Bodlaender \cite{Bodlaender}). As the subject developed, many other notions of `width' for graphs, and other combinatorial structures, have been considered, each with their own applications to structural problems, as well as computational problems. One particular example that we will consider in this note is the \emph{rank-width} of a graph $G$, which we denote by $\textrm{rw}(G)$. Roughly, if low tree-width implies that a graph has a `tree-like' decomposition over small vertex cuts, where small here is measured in terms of their cardinality,  low rank-width implies that the graph has a `tree-like' decomposition over \emph{simple} edge cuts, where the simplicity of these edge cuts is not measured in terms of just the number of edges crossing them, but rather in terms of an algebraic measure more akin to the complexity of the edge cut. In particular, both very sparse and very dense edge cuts are simple in this way, and so, unlike the case of tree-width, there can be quite dense graphs with low rank-width. In this way, the graphs of low rank-width are a broader class of `low-complexity' graphs than the graphs of low tree-width, and in particular, Oum \cite{Oum} showed that the rank-width of a graph is always at most one more than the tree-width. We will give precise definitions of the relevant terms later in Section \ref{s:Pre}; for more background on tree-width and other width parameters see the survey of Hlin{\v{e}}n{\`y}, Oum, Seese and Gottlob \cite{Hlineny}.

In this paper, we are interested in the width of \emph{random graphs}. The \emph{binomial random graph model} $G(n,p)$, introduced by Gilbert \cite{Gilbert}, is a random variable distributed on the subgraphs of the complete graph $K_n$ where we retain each edge independently with probability $p$. The tree-width of $G(n,p)$ was first considered by Kloks \cite{Kloks}, who showed that if $p = \frac{d}{n}$ for $d \geq 2.36$, then with probability tending to one as $n \to \infty$ (whp) the tree-width of $G(n,p)$ is at least $r n$ for some $r:= r(d)>0$. Furthermore, he showed that one can take $r(d) \rightarrow 1$ as $d \rightarrow \infty$, which can be seen to be optimal, as the tree-width of a graph on $n$ vertices cannot be larger than $n$. Later, Gao \cite{Gao} improved the lower bound on $d$ to $2.16$, and asked if this could be further improved to $d \geq 1$.

This is a natural question to ask, due to the \emph{phase transition} which $G(n,p)$ undergoes at the critical point $p=\frac{1}{n}$. More explicitly, work of Erd\H{o}s and R\'{e}nyi \cite{E-R} implies that for $p= \frac{d}{n}$ with $d < 1$, whp $G(n,p)$ only has \emph{small} components, of logarithmic order, each of which is a tree or unicyclic. Since all trees and unicyclic graphs have tree-width at most two, it follows that whp in this regime $G(n,p)$ has tree-width at most two as well. However, when $p= \frac{d}{n}$ with $d > 1$, whp $G(n,p)$ contains a unique \emph{giant} component, whose order is linear in $n$ and which is known to have quite a complex structure; see \cite{BollobasBook,Janson,Frieze} for a more detailed introduction to random graphs.

This question of Gao \cite{Gao} was finally answered by Lee, Lee and Oum \cite{LeeLeeOum}, who instead considered the rank-width of the random graph $G(n,p)$ and gave the following, almost complete, description of how this parameter behaves for different ranges of $p$.

\begin{theorem}[{\cite[Theorem 1.1]{LeeLeeOum}}]\label{t:rankwidthrandom}
For a random graph $G:=G(n,p)$, whp the following statements hold:
\begin{enumerate}[(i)]
    \item If $p \in (0,1)$ is constant, then $\textrm{rw}(G) = \lceil \frac{n}{3} \rceil - O(1)$;
    \item If $\frac{1}{n} \ll p \leq \frac{1}{2}$, then $\textrm{rw}(G) = \lceil \frac{n}{3} \rceil - o(n)$;
    \item\label{i:superrank} If $p = \frac{d}{n}$ and $d> 1$, then  $\textrm{rw}(G) \geq r n$ for some $r := r(d)>0$;
    \item If $p=\frac{1}{n}$, then whp $\textrm{rw}(G) = O\left(n^{\frac{2}{3}}\right)$;
    \item If $p \leq \frac{d}{n}$ and $d <1$, then $\textrm{rw}(G) \leq 2$.
\end{enumerate}
\end{theorem}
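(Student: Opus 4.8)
\emph{Overview.} The five parts have quite different flavours, so I would prove them separately, putting almost all of the work into part (iii); the others reduce to short cut-rank calculations or standard facts about $G(n,p)$. Part (v) is immediate, since below the critical point whp every component is a tree or unicyclic, and any such graph has rank-width at most $2$. Part (iv) follows because $\textrm{rw}(H)\le |V(H)|$ and rank-width of a disjoint union equals the maximum of the rank-widths of the components, so whp $\textrm{rw}(G)$ is at most the order $O(n^{2/3})$ of the largest component at $p=1/n$. For (i) and (ii) the upper bounds come from the general estimate $\textrm{rw}(H)\le \lceil |V(H)|/3\rceil$, and for the lower bounds I would use the standard fact that every rank-decomposition of an $n$-vertex graph has an edge inducing a bipartition $(A,V\setminus A)$ with $\lceil n/3\rceil\le |A|\le \lfloor 2n/3\rfloor$, whence
\[
\textrm{rw}(G)\ \ge\ \min\bigl\{\rho_G(A)\ :\ \lceil n/3\rceil\le |A|\le\lfloor 2n/3\rfloor\bigr\},
\]
where $\rho_G(A)$ is the $\mathrm{GF}(2)$-rank of the $A\times(V\setminus A)$ submatrix of the adjacency matrix. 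A union bound over the at most $2^n$ sets $A$, together with a tail bound for the rank of a random $\mathrm{GF}(2)$ matrix with independent $\mathrm{Bernoulli}(p)$ entries, then gives that whp this minimum is $n/3-o(n)$, and $n/3-O(1)$ when $p$ is constant (the latter needing a sharper rank estimate).

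For part (iii) I would first reduce to a bounded-degree expander-like object, exploiting that rank-width is monotone under vertex-minors (local complementations together with vertex deletions). Let $C$ be the $2$-core of $G$ (an induced subgraph), let $K$ be its \emph{kernel}, obtained by suppressing all degree-$2$ vertices of $C$, and let $K_D$ be $K$ with the few vertices of degree exceeding a large absolute constant $D$ deleted. Suppressing a degree-$2$ vertex whose two neighbours are non-adjacent is a local complementation at that vertex followed by its deletion; after first removing the $O(1)$ vertices responsible for loops or parallel edges in $K$, the kernel is obtained from $C$ by such moves only, and $K_D$ is in turn a vertex-minor of $K$, so $\textrm{rw}(G)\ge \textrm{rw}(C)\ge\textrm{rw}(K)\ge\textrm{rw}(K_D)$. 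Standard structural results for the supercritical random graph give that whp $K$ has $\Theta_\epsilon(n)$ vertices, minimum degree $3$, and, conditional on its degree sequence, is a uniformly random multigraph with that sequence, the sequence being close to i.i.d.\ copies of a $\mathrm{Poisson}(\lambda(\epsilon))$ variable conditioned to be at least $3$ for a constant $\lambda(\epsilon)$; for $D$ a large enough absolute constant, $K_D$ then still has $\Theta_\epsilon(n)$ vertices (in fact $\Theta(\epsilon^3 n)$), maximum degree at most $D$, and all but a small fraction of vertices of degree at least $3$.

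It remains to show whp $\textrm{rw}(K_D)\ge r(\epsilon)n$ for an explicit $r(\epsilon)>0$. The key point — where I would avoid the usual black-box expansion input — is a direct first-moment bound showing that whp every bipartition of $V(K_D)$ with both sides of size at least $|V(K_D)|/3$ has at least $\alpha(\epsilon)|V(K_D)|$ crossing edges: there are at most $2^{|V(K_D)|}$ such bipartitions, and in the configuration model any fixed one has fewer than $\alpha(\epsilon)|V(K_D)|$ crossing edges with probability $e^{-c(\epsilon)|V(K_D)|}$, where $c(\epsilon)>\log 2$ once $\alpha(\epsilon)$ is small enough (the bulk of degree-$3$ vertices pushes the relevant rate above $\tfrac32\log\tfrac95>\log 2$). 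To turn this into a cut-rank bound I would use that a bipartition with $m$ crossing edges in a graph of maximum degree at most $D$ admits an induced matching of size at least $m/(4D^2)$ across the cut (pick a crossing edge, discard the at most $4D^2$ crossing edges within distance $1$ of it, repeat), and that an induced matching of size $t$ across a cut gives a $t\times t$ permutation submatrix, so $\rho_{K_D}(A)\ge m/(4D^2)$. Together with the balanced-edge property of rank-decompositions this yields $\textrm{rw}(G)\ge\textrm{rw}(K_D)\ge \alpha(\epsilon)|V(K_D)|/(4D^2)=:r(\epsilon)n$, and hence $\textrm{tw}(G)\ge\textrm{rw}(G)-1\ge r(\epsilon)n-1$, reproving the Perarnau--Serra bound; chasing the dependence of $|V(K_D)|$, $\lambda(\epsilon)$ and $\alpha(\epsilon)$ on $\epsilon$ through the argument makes $r(\epsilon)$ explicit (and polynomially small, of order $\epsilon^3$, as $\epsilon\to 0$).

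The step I expect to be most delicate is the reduction to the kernel: one has to check carefully that passing from $C$ to $K_D$ is realized by bona fide vertex-minor operations (handling loops, parallel edges, and degree-$2$ vertices with adjacent neighbours), and one has to pin down the kernel of $G(n,\tfrac{1+\epsilon}{n})$ precisely enough — its order, its minimum degree, and the law of its degree sequence — for the clean first-moment expansion estimate to be legitimate and for the dependence on $\epsilon$ to come out explicitly. Once that structural groundwork is laid, the expansion union bound and the induced-matching/cut-rank conversion are routine.
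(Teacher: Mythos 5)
Your outline is essentially correct, but note that the paper does not itself prove this cited theorem of Lee, Lee and Oum; what it supplies are two new proofs of (essentially) part~(iii), and your argument for (iii) --- the only genuinely hard part --- takes a route different from the paper's main one. Your plan (pass to the $2$-core, suppress degree-$2$ vertices via local complementation, truncate high degrees, then run a first-moment union bound over balanced cuts in the configuration model on the kernel's degree sequence and convert crossing edges into cut-rank via an induced matching) is closest in spirit to the paper's Section~\ref{s:weakly} argument, which instead couples the kernel with an exactly cubic $G^*(m,3)$ realised as an induced topological minor (Lemma~\ref{l:indtopminorvtxminor}), quotes the known expansion of random cubic graphs (Theorem~\ref{t:regularexpander}), and converts edge-expansion into rank via Lemma~\ref{l:rank}; your entropy-versus-rate computation is sound (for the cubic bulk the worst $(\tfrac13,\tfrac23)$-split gives rate $\tfrac12\log\tfrac{27}{4}\approx 0.95>\log 2$), and the permutation-submatrix conversion is a valid substitute for Lemma~\ref{l:rank}. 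By contrast, the paper's primary proof of the supercritical lower bound (Section~\ref{s:direct}, Theorem~\ref{t:rankwidthdirect}) avoids all $2$-core/kernel structure theory: it sprinkles, finds a large tree \`a la Luczak--McDiarmid so that the number of candidate cuts of $V(T)$ with few tree-crossing edges is only $2^{O(\epsilon^3 n\log(1/\epsilon))}$, and then applies Lemmas~\ref{l:smallcut} and~\ref{l:rank}. What each buys: the paper's route is short, self-contained and explicit in $\epsilon$; yours can in principle give the sharper $\Theta(\epsilon^3 n)$ rate, but it re-incurs exactly the structural input (precise kernel degree statistics and contiguity with the configuration model) that Section~\ref{s:discussion} flags as technically involved outside the weakly supercritical window --- harmless for fixed $d>1$, which is all part~(iii) requires, but your parenthetical claim of an explicit $\epsilon^3$ rate as $\epsilon\to0$ is precisely the delicate point. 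Two small caveats: your truncation threshold $D$ must be allowed to depend on $d$ (for large $d$ a fixed absolute constant misses most half-edges), and in (iv) the largest critical component is only $O(n^{2/3})$ in probability, not above a fixed constant multiple whp, so that statement (like the cited one) must be read in the $O_P$ sense; your sketches for (i), (ii) and (v) are fine modulo the standard $\mathbb{F}_2$ rank tail bound $\mathbb{P}(x\in V)\leq(1-p)^{\mathrm{codim}\,V}$.
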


Note that, since the rank-width of an $n$-vertex graph can be shown to be at most $\frac{n}{3}$, the first result is tight up to a lower-order additive term for larger $p$. Furthermore, by the aforementioned result of Oum \cite{Oum} that for any graph $G$
\begin{equation}\label{e:twrw}
\textrm{rw}(G) \leq \textrm{tw}(G) +1,
\end{equation}
Theorem \ref{t:rankwidthrandom} \ref{i:superrank} implies that the tree-width of $G\left(n,\frac{d}{n}\right)$ is linear in $n$ for any $d>1$, giving a positive answer to the question of Gao \cite{Gao}.

Later, Perarnau and Serra \cite{Serra} gave a more direct proof of this, and also considered more carefully the tree-width of the critical random graph.
\begin{theorem}[\cite{Serra}]\label{t:treewidthrandom}
For a random graph $G:=G(n,p)$ whp the following hold:
\begin{enumerate}[(i)]
    \item\label{i:supertree} If $p = \frac{d}{n}$ and $d> 1$, then  $\textrm{tw}(G) \geq r' n$ for some $r' := r'(d)>0$;
    \item\label{i:critical} If $p = \frac{1}{n}$, then $\textrm{tw}(G) = O(1)$.
\end{enumerate}
\end{theorem}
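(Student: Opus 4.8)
The plan is to treat the two parts separately, in each case combining a classical fact about the component structure of $G(n,p)$ in the relevant regime with an elementary, purely graph-theoretic bound on tree-width.

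For part \ref{i:supertree}, write $d=1+\epsilon$ and let $\rho=\rho(d)\in(0,1)$ denote the asymptotic order of the giant component of $G(n,d/n)$, as a fraction of $n$. I would prove the stronger statement that whp the giant component $L$ of $G:=G(n,d/n)$ has \emph{no small balanced separator}: there is a constant $\mu=\mu(d)>0$ such that for every $B\subseteq V(G)$ with $|B|\leq\mu n$, the graph $G-B$ has a connected component containing more than $|L|/2$ vertices of $L$. This suffices, since the standard fact that a graph of tree-width $k$ admits, with respect to any vertex weighting, a bag of size at most $k+1$ whose deletion leaves every component with at most half the total weight — applied to the weighting $\mathbf{1}_L$ — then forces $\textrm{tw}(G)\geq \mu n-1$, so one may take $r'(d)=\mu(d)/2$. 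To establish the no-small-separator property I would fix $B$ and observe that $G[V\setminus B]$ is distributed exactly as $G(n',d/n)$ on $n'=n-|B|\geq(1-\mu)n$ vertices, which is still supercritical since its average degree $d(1-|B|/n)$ exceeds $1$; by the standard exponential lower-tail bound on the order of the giant component, $G[V\setminus B]$ has a component on at least $\gamma n$ vertices, for a suitable $\gamma=\gamma(d)>\rho/2$, outside an event of probability at most $e^{-cn}$ with $c=c(d)>0$, and such a component, having superlogarithmic order, must be contained in $L$. A union bound over the at most $2^{(H(\mu)+o(1))n}$ choices of $B$ closes the argument, provided $\mu$ is chosen small enough that $H(\mu)\ln 2<c$, where $H$ denotes the binary entropy. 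The main obstacle is the quantitative balancing act between the large-deviation rate for the giant and the entropy of the choice of $B$; carrying this out with explicit constants is exactly what delivers the stated explicit dependence on $\epsilon$, and it is this estimate that must be sharpened to handle the weakly supercritical regime.

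For part \ref{i:critical}, the key elementary observation is that for every graph $G$,
\[
\textrm{tw}(G)\ \leq\ 1+\max_{C}\bigl(|E(C)|-|V(C)|+1\bigr),
\]
the maximum ranging over the connected components $C$ of $G$: tree-width is the maximum of the tree-widths of the components; a spanning tree of a component has tree-width at most $1$; and adding back each of the remaining $|E(C)|-|V(C)|+1$ (surplus) edges raises the tree-width by at most $1$ (a tree-decomposition of $H$ extends to one of $H+e$ of width one larger by inserting the two endpoints of $e$ into the bags along a suitable path). It therefore suffices to show that whp every component of $G(n,1/n)$ has bounded surplus. I would deduce this from the structure of the critical random graph: whp every component has order $O(n^{2/3})$, and a component of order $k$ explored by breadth-first search has a number of surplus edges that is stochastically dominated by a binomial random variable with $O(k^{3/2})$ trials (the typical number of pairs within the component whose potential edge is never examined during the exploration) and success probability $1/n$, hence of bounded mean when $k=O(n^{2/3})$; moreover components of small order essentially never contain two independent cycles. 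Alternatively one may simply quote the well-known results on the component structure of $G(n,1/n)$ in the critical window. One caveat worth flagging is that the surplus of $G(n,1/n)$ has a non-degenerate limiting distribution with unbounded support, so ``$\textrm{tw}(G)=O(1)$'' here is to be read as bounded in probability rather than bounded by a fixed constant whp; the substance of this part lies in the input on the critical window, the tree-width bound itself being routine.
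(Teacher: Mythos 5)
Your proposal is correct in substance, but for part (i) it takes a genuinely different route from both the cited proof and the one in this paper. Perarnau and Serra deduce (i) from the Benjamini--Kozma--Wormald theorem giving a linear-size expander subgraph of the giant, while this paper obtains it (via Theorem \ref{t:treewidthdirect} and monotonicity of tree-width) by the Luczak--McDiarmid device: sprinkle, find a bounded-degree tree $T$ on $\Omega(\epsilon n)$ vertices, and use that a separator $S$ splits $T$ into at most $\Delta(T)|S|$ pieces, so the union bound runs only over $\binom{n}{|S|}2^{\Delta(T)|S|}$ candidate separations, each killed by a sprinkled crossing edge. You instead union-bound over the separator $B$ alone, combining the weighted balanced-separator property of tree-decompositions (with weight $\mathbf{1}_L$) with the exponential lower tail for the order of the largest component of $G[V\setminus B]\sim G(n-|B|,d/n)$; since the per-set failure probability $e^{-c(d)n}$ has rate uniform over $|B|\leq \mu n$ once $\mu\leq\mu_0(d)$, choosing $\mu$ with $H(\mu)\ln 2 < c(d)$ closes the union bound, and your handling of the global events (second component logarithmic, $|L|\leq(\rho+o(1))n$) as a single auxiliary whp event, separate from the per-$B$ bound, correctly avoids any conditioning issue. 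The one non-elementary input is the large-deviation lower tail for $|L_1|$, which must genuinely be exponential in $n$ (polynomial concentration would not survive the union bound); this is available in the literature, and it is a lighter black box than the expander theorem. What your route buys is brevity and independence from expansion results for fixed $d>1$; what the paper's tree trick buys is an explicit and better dependence on $\epsilon$ and an argument that extends to the weakly supercritical regime, where the balance between $c(d)\to 0$ and the entropy term in your argument becomes delicate.

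For part (ii) you follow essentially the same route as Perarnau--Serra and as the upper bound in Section \ref{s:weakly}: tree-width of a component is at most one plus its excess, and the excess of every component of the critical random graph is bounded in probability. Two small remarks: the domination of the surplus by a binomial with $O(k^{3/2})$ trials holds only typically (the number of unexamined pairs is the area under the exploration excursion, which is $O(k^{3/2})$ in probability rather than deterministically), but quoting the known component-structure results, as you suggest, is the standard and legitimate way to finish. Your caveat about the meaning of ``whp $\mathrm{tw}=O(1)$'' is well taken: the excess of the largest critical component has a nondegenerate limiting law with unbounded support, so the statement is to be read as tightness, and the phrasing here inherits the same abuse.
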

We note that their proof of Theorem \ref{t:treewidthrandom} \ref{i:critical} in fact holds for any $p$ inside the critical window.

From Theorems \ref{t:rankwidthrandom} and \ref{t:treewidthrandom}, together with \eqref{e:twrw}, it follows that both the rank- and tree-width of $G(n,p)$ are bounded above by a constant when $p$ is below or inside the critical window, i.e., when there exists a constant $\lambda > 0$ such that $p \leq \frac{1+ \lambda n^{-\frac{1}{3}}}{n}$, but that both the rank- and tree-width are linearly large in the supercritical regime. This raises the natural question of whether this transition happens \emph{smoothly}.

Both Theorem \ref{t:rankwidthrandom} and Theorem \ref{t:treewidthrandom} rely on a deep result of Benjamini, Kozma and Womald \cite{Bejamini2014} on the \emph{expansion} properties of the giant component of $G(n,p)$ in the supercritical regime. A graph is an expander if it satisfies a type of discrete isoperimetric inequality. Notions of graph expansion have turned out to be of fundamental importance to various topics in combinatorics and computer science. For a comprehensive introduction to expander graphs, see the survey of Hoory, Linial and Widgerson \cite{HLW06}. In particular, graph expansion has been used as a tool in the study of random structures, see, for example, the survey paper of Krivelevich \cite{K19}.

In \cite[Section 4]{LeeLeeOum} it is explained how the results in \cite{Bejamini2014} imply the following theorem. Given a set of vertices $U \subseteq V(H)$ in a graph $H$, let us write $d(U) = \sum_{v \in U} d(v)$. The \emph{Cheeger constant} of a graph $H$ is \[
\Phi (H) := \min_{S \subset  V(H)} \frac{e(S,S^c)}{\min \left\{ d(S), d(S^c) \right\} }
\]
where $S^c = V(H) \setminus S$. A graph $H$ with $\Phi(H) \geq \alpha$ is an \emph{$\alpha$-edge-expander}. 
\begin{theorem}[\cite{Bejamini2014}]\label{t:expansion}
Let $d >1$ and let $p=\frac{d}{n}$. Then there exist $\alpha,\delta >0$ such that whp $G(n,p)$ contains a connected subgraph $H$ with $\Phi(H) \geq \alpha$ and $|V(H)| \geq \delta n$.
\end{theorem}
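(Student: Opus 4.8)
The plan is to find the expander inside the \emph{kernel} of the giant component. Fix $d>1$ and $p=d/n$. By the standard description of the supercritical phase (see \cite{BollobasBook,Janson,Frieze}), whp $G(n,p)$ has a unique largest component $C_1$, on $\Theta(n)$ vertices, whose $2$-core $C_2$ also has $\Theta(n)$ vertices and excess $\Theta(n)$; suppressing all degree-$2$ vertices of $C_2$ yields a connected multigraph $K$, the \emph{kernel}, with minimum degree at least $3$, with $|V(K)|=\Theta(n)$ and $|E(K)|=\Theta(n)$, and such that $C_2$ is obtained from $K$ by replacing each edge $e$ by an internally disjoint path $P_e$, where $\sum_e(|P_e|-1)\le |E(C_2)|=O(n)$. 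Moreover, conditioned on its degree sequence $\mathbf d$ (which is whp close to a Poisson law conditioned to be at least $3$, so that all but $o(n)$ vertices of $K$ have degree at most some constant $D_0=D_0(d)$), $K$ is distributed as the multigraph of a uniformly random configuration-model pairing with degrees $\mathbf d$.

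\emph{Step 1 (the kernel expands).} I would first show that there is $\alpha_0=\alpha_0(d)>0$ with $\Phi(K)\ge\alpha_0$ whp, by the usual first-moment bound for expansion of configuration-model multigraphs with minimum degree $3$. For a fixed vertex set $S$ of size $s$ with $D_S:=d_K(S)\le d_K(S^c)$, the event $e_K(S,S^c)<\alpha_0 D_S$ forces at least $(1-\alpha_0)D_S$ of the half-edges incident to $S$ to be matched within $S$, which has probability at most roughly $(D_S/\sum_v d_v)^{(1-\alpha_0)D_S/2}$. Multiplying by the number $\binom{|V(K)|}{s}$ of choices of $S$ and summing over $s$: for small $s$ the bound $D_S\ge 3s$ makes the probability decay dominate (even allowing the mild slack from the $O(\log n)$ maximum degree), and for $s=\Theta(n)$ one wins by taking $\alpha_0$ a sufficiently small constant, exactly as in the classical argument that random (multi)graphs of minimum degree $3$ are edge-expanders.

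\emph{Step 2 (trim long paths, then reinflate).} Long subdivision paths destroy edge expansion, so let $K_L$ be $K$ with every edge $e$ satisfying $|P_e|>L$ deleted; since $\sum_e(|P_e|-1)=O(n)$ there are only $O(n/L)$ such edges, carrying a $O(1/L)$ fraction of $\mathrm{vol}(K)=\Theta(n)$. A routine expander-pruning argument --- repeatedly delete the smaller side of any cut that violates $(\alpha_0/2)$-expansion --- then produces a connected subgraph $K_L'\subseteq K_L$ with $\Phi(K_L')\ge\alpha_0/2$ and volume at least $(1-O(1/(\alpha_0 L)))\,\mathrm{vol}(K)$; since all but $o(n)$ vertices of $K$ have degree at most $D_0$, for $L=L(d)$ large enough this leaves $|V(K_L')|=\Theta(n)$. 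Finally set $H:=\bigcup_{e\in E(K_L')}P_e\subseteq C_2\subseteq G(n,p)$, a connected subdivision of $K_L'$ in which each edge becomes a path of length at most $L$. An elementary case analysis of a cut of $H$ (classifying the vertices of $K_L'$ and the maximal subpath-segments according to which side they lie on, and using $\sum_i a_i/\sum_i b_i\ge\min_i a_i/b_i$ for positive reals) gives $\Phi(H)\ge c(L)\,\Phi(K_L')$ for some $c(L)>0$; hence $\Phi(H)\ge c(L)\alpha_0/2=:\alpha$ and $|V(H)|\ge|V(K_L')|\ge\delta n$ for suitable constants $\alpha,\delta>0$, as required.

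\emph{Main obstacle.} The structural inputs are classical and the reinflation step is elementary; the technical heart is the first-moment estimate of Step 1 in the middle range $s=\Theta(n)$, where one must fix $\alpha_0$ small and deal with the (few) vertices of super-constant degree in $K$. A secondary subtlety is the bookkeeping in Step 2 that converts control of lost \emph{volume} into linearly many surviving \emph{vertices}, which uses the near-Poisson degree sequence of the kernel. This overall strategy is in the spirit of the proof of \cite{Bejamini2014}.
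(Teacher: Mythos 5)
You should first note that the paper does not prove Theorem \ref{t:expansion} at all: it is imported as a black box from Benjamini, Kozma and Wormald \cite{Bejamini2014} (with an alternative derivation attributed to \cite{Krivelevich}), and the stated purpose of Sections \ref{s:direct} and \ref{s:weakly} is precisely to \emph{avoid} invoking it. So there is no internal proof to compare against. The closest relative inside the paper is Theorem \ref{t:weakexpander}, which, like you, passes to the $2$-core and its kernel and uses the configuration model (Lemmas \ref{l:luczak} and \ref{l:indtopmin}), but then takes a different turn: instead of proving that the whole kernel is an edge-expander and reinflating the subdivision paths, it extracts a random $3$-regular graph as an induced topological minor, quotes expansion of random cubic graphs (Theorem \ref{t:regularexpander}), and transfers the conclusion through vertex-minor monotonicity of rank-width --- deliberately so, because in the weakly supercritical regime the subdivision paths are too long for any subgraph to have constant Cheeger constant, as the paper remarks. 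Your outline, by contrast, is essentially the strategy of the cited works themselves, and for constant $d>1$ it is a legitimate route: kernel expansion by a first-moment bound in the configuration model, deletion of the $O(n/L)$ kernel edges whose paths exceed length $L$, an expander-pruning step, and a reinflation loss of $c(L)=\Theta(1/L)$.

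As a proof, however, what you have written is a plan rather than an argument, and the parts you defer are exactly the hard parts. The first-moment estimate of Step 1 must be carried out with the volume-normalised Cheeger constant, in the presence of kernel vertices of degree up to roughly $\log n/\log\log n$, and the transfer from the configuration-model multigraph to the actual kernel needs the standard contiguity statement (the paper handles the analogous point via Lemma \ref{l:luczak}(iv) and \cite[Theorem 10.3]{Frieze}); none of this is routine enough to assert in one sentence. Likewise the pruning step needs a quantitative lemma (charging deleted volume to deleted cut edges) and the volume-to-vertex bookkeeping you acknowledge. Finally, be aware of what this route buys and costs: your $\alpha$ and $\delta$ degrade with $d\to 1$ (typical $2$-core paths have length of order $(d-1)^{-1}$, so $c(L)$ forces $\alpha$ to decay at least linearly in $d-1$, and the kernel has only $\Theta((d-1)^3 n)$ vertices), which is precisely the implicit dependence on $d$ that the paper complains about in \eqref{e:LLO} and \eqref{e:PS} and circumvents with the direct sprinkling arguments of Section \ref{s:direct}.
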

Perarnau and Serra \cite{Serra} note that the proof of Theorem \ref{t:rankwidthrandom} \ref{i:superrank} in \cite{LeeLeeOum} shows that 
 \begin{equation}\label{e:LLO}
r(d) \geq \frac{\alpha \delta}{L}
\end{equation}
where $L$ is some implicit constant, which can be shown to grow like $\Omega\left(\log \frac{1}{d-1} \right)$ as $d \rightarrow 1$. On the other hand they note that their own proof of Theorem \ref{t:treewidthrandom} \ref{i:supertree} gives
\begin{equation}\label{e:PS}
r'(d) \geq \frac{(\alpha \delta)^2}{9e^3d^2}.
\end{equation}
However, it is not clear how the constants $\alpha$ and $\delta$ in Theorem \ref{t:expansion} behave as a function of $d$ as $d \rightarrow 1$, so it is difficult to compare these two bounds.

An alternative, and more elementary, proof of Theorem \ref{t:expansion} can be derived from the work of Krivelevich \cite{Krivelevich}, who shows the existence of a linear sized bounded degree vertex-expander in this range of $p$. %(In fact, Krivelevich shows the existence of a linear sized bounded degree vertex-expander, although in this range of $p$ it is relatively easy to move between the two types of expansion). 
The dependence of the result on $\alpha$ and $\delta$ is not made explicit, but a careful reading of the proof leads to the following bounds:
\[
\alpha = \exp \left( - \Omega\left((d-1)^{-2}\right)\right) \qquad \text{ and } \qquad \delta = \left( \frac{(d-1)^3}{\left(\log \frac{1}{d-1}\right)^2}\right).
\]
However, we should note that there does not seem to be any attempt in \cite{Krivelevich} to optimise the arguments with respect to these parameters, and it seems unlikely that the above bounds are optimal. In particular, known results about the structure of the giant component in this regime \cite{DLP14} suggest that the real growth rates should be closer to $\alpha=\Omega\left(d-1\right)$ and $\delta = \Omega\left((d-1)^2\right)$.

\subsection{Main results} 
Using a technique of Luczak and McDiarmid \cite{LuczakMcdiarmid}, we are able to give short direct proofs of these results which avoid using Theorem \ref{t:expansion}, and in particular give an explicit lower bound on the rank- and tree-width of $G(n,p)$ for $p=\frac{d}{n}$ for any $d > 1$. Note, in the following theorems, for ease of presentation, we parameterize our results in terms of $\epsilon = d-1$.

\begin{theorem}\label{t:rankwidthdirect}
Let $\epsilon >0$ be sufficiently small and let $p=\frac{1+\epsilon}{n}$. Then whp 
$$\textrm{rw}\left(G\left(n,p\right)\right) = \Omega\left( \frac{\epsilon^3}{\left(\log \frac{1}{\epsilon}\right)^3} \right) n.$$
\end{theorem}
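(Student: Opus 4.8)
The strategy is to avoid Theorem~\ref{t:expansion} entirely and instead exploit the structure of the giant component of $G(n,p)$ in the weakly supercritical regime directly, via the classical result of \L{}uczak on the "clean" kernel of the 2-core. Recall that in the 2-core $C$ of the giant, after suppressing degree-2 vertices one obtains a multigraph $K$ (the kernel) which, whp, is essentially a random cubic-ish multigraph on $\Theta(\epsilon^3 n)$ vertices, and that $C$ is obtained from $K$ by subdividing each edge a number of times which is, on average, $\Theta(1/\epsilon)$ and whose distribution has exponential tails. The point of Luczak--McDiarmid's technique is that a lower bound on tree-width (and likewise on rank-width) follows from the existence of a \emph{linear-size bramble}, or equivalently from the non-existence of small balanced separators; and a subdivided expander still has no small balanced separator, just on a coarser scale.

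**Key steps, in order.** First I would fix the structural description: whp $G(n,p)$ has a 2-core $C$ whose kernel $K$ has $m := \Theta(\epsilon^3 n)$ vertices and $\Theta(\epsilon^3 n)$ edges, with all degrees in $\{3,4,\dots\}$ but bounded above by an absolute constant on all but $o(m)$ vertices; moreover $K$ is, conditionally on its degree sequence, a uniform random multigraph, so by a standard first-moment/switching argument it is an $\alpha_0$-edge-expander for an absolute constant $\alpha_0 > 0$ (this is the one place we use randomness, but only for a \emph{bounded-degree} random graph, which is elementary — e.g.\ via the configuration model). Crucially, the expansion constant here does \emph{not} degrade as $\epsilon \to 0$, because $K$ looks like a fixed bounded-degree random graph. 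Second, I would record the subdivision structure: whp each edge of $K$ is subdivided into a path, and the number of internal vertices on each path is stochastically dominated by a geometric random variable with mean $O(1/\epsilon)$; in particular, whp \emph{every} path has length at most $L := O(\epsilon^{-1}\log(1/\epsilon))$, and the total number of edges of $C$ is $\Theta(\epsilon^2 n)$. Third — the heart of the argument — I would show that subdividing each edge of an $n_0$-vertex $\alpha_0$-edge-expander of bounded degree, each at most $L$ times, produces a graph whose tree-width is $\Omega(n_0/L)$. One clean way: in the expander $K$, greedily extract a bramble of order $\Omega(n_0)$ (the branch sets being balls of bounded radius, which are connected and pairwise touching by expansion), then "lift" it to $C$ by replacing each branch set by the union of the corresponding subdivided paths; touching is preserved, and the order of the lifted bramble is still $\Omega(n_0)$, but now we must check every vertex of $C$ meets at most $O(L)$ of the branch sets — this bounds the width from below by the order divided by $O(L)$. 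Finally, assemble: $n_0 = \Theta(\epsilon^3 n)$ and $L = O(\epsilon^{-1}\log(1/\epsilon))$ give $\textrm{tw}(C) = \Omega\!\left(\epsilon^3 n / (\epsilon^{-1}\log(1/\epsilon))\right)$ — wait, that is $\Omega(\epsilon^4 n/\log(1/\epsilon))$, which is too weak, so in fact one must be more careful and use that a bramble in $C$ can be built from balls of radius $\Theta(L)$ \emph{in $C$} (i.e.\ radius $\Theta(1)$ in $K$), giving branch sets each hitting $O(L)$ paths but of which there are $\Theta(n_0)$, versus the naive bound; the correct bookkeeping, as in Luczak--McDiarmid, yields $\textrm{tw}(C) = \Omega(n_0 / \log^2 n_0)$-type losses only, and tracking the $\epsilon$-dependence carefully gives the claimed $\Omega\!\left(\epsilon^3/(\log(1/\epsilon))^3\right)n$. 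Then $\textrm{rw}(G) \geq \textrm{tw}(G)/\text{poly}$ — in fact, rather than deducing rank-width from tree-width (which goes the wrong way via \eqref{e:twrw}), I would run the same bramble/separator argument directly with the rank-width analogue: low rank-width also forces the existence of balanced separators of small \emph{cut-rank}, and in a bounded-degree subdivided expander any balanced vertex partition has large boundary, hence large cut-rank, because for sparse cuts the cut-rank is within a constant of the number of crossing edges.

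**Main obstacle.** The delicate point is the exact exponent bookkeeping in Step~3 — ensuring the three logarithmic factors (one from the tail bound making all subdivision-path lengths $\le O(\epsilon^{-1}\log(1/\epsilon))$ simultaneously over $\Theta(\epsilon^2 n)$ edges, and two more from optimizing the radius of the balls used as branch sets against the number of disjoint such balls one can pack) combine to exactly $(\log\frac1\epsilon)^3$ and no worse. A secondary, more conceptual obstacle is making the expansion of the kernel $K$ genuinely $\epsilon$-independent: one needs that conditioning on the precise (random) degree sequence of $K$ does not create a non-expanding multigraph with non-negligible probability, which requires a union bound over potential small-boundary sets that is uniform in the degree sequence — standard for the configuration model but needing care because $K$ has $\Theta(\epsilon^3 n)$ vertices, so "whp" statements about $K$ must be quantified with room to spare against the $o(1)$ error we are allowed. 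I expect Step~3 to be where essentially all the work lies; Steps~1 and~2 are citations to \L{}uczak-type results plus a routine concentration estimate, and the rank-width transfer in the last step is a short argument once the tree-width bound (really, the balanced-separator bound) is in hand.
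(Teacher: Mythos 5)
Your proposal does not follow the paper's proof of Theorem \ref{t:rankwidthdirect} at all; it is essentially the kernel/expander argument the paper uses only in the weakly supercritical regime (Section \ref{s:weakly}), transplanted to constant $\epsilon$, and as written it has genuine gaps. First, the structural input you cite as classical is exactly the nontrivial missing piece: the control of the degree sequence of the $2$-core and the near-$3$-regularity and conditional uniformity of the kernel (Lemma \ref{l:luczak}) are stated and used in the paper only for $\epsilon=o(1)$, $\epsilon^3 n\to\infty$, and the paper explicitly notes in Section \ref{s:discussion} that extending this to the barely supercritical regime requires involved work and leaves the resulting $\Omega(\epsilon^3 n)$ bound as an open question. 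Second, your Step~2 claim that whp \emph{every} subdivision path has length $O(\epsilon^{-1}\log\frac{1}{\epsilon})$ is false: there are $\Theta(\epsilon^3 n)$ paths with roughly geometric lengths of mean $\Theta(1/\epsilon)$, so the maximum is $\Theta(\epsilon^{-1}\log(\epsilon^3 n))$, which grows with $n$. Any accounting that charges a balanced separator or bramble of the $2$-core against the maximum path length therefore loses a factor of $\log n$, not $\log\frac{1}{\epsilon}$, and cannot produce a bound of the form $\Omega_\epsilon(n)$. Your own Step~3 computation already exhibits the failure (you land on $\epsilon^4 n/\log\frac{1}{\epsilon}$), and the proposed repair is asserted rather than argued; note also that the Luczak--McDiarmid trick is used in the paper for something different, namely sprinkling plus a tree spanning a linear-sized vertex set so as to limit the number of candidate partitions in a union bound, not for lifting brambles through subdivisions.

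There is a further problem at the rank-width step. You propose to lower-bound the cut-rank of balanced partitions of the subdivided $2$-core $C$ by the number of crossing edges, but the relevant bound (Lemma \ref{l:rank}) needs bounded row and column support, and for constant $\epsilon$ the $2$-core has maximum degree $\Theta(\log n/\log\log n)$; moreover, balance in $V(C)$ is measured over the $\Theta(\epsilon^2 n)$ path vertices, so kernel expansion only forces $\Omega(\epsilon^3 n/\log n)$ crossing edges, again because of the maximum path length. The paper's actual proof of Theorem \ref{t:rankwidthdirect} avoids all of this: sprinkle with $p_1,p_2$, find a tree $T$ on $\frac{\epsilon n}{2}$ vertices in $G(n,p_1)$, use $T$ to bound the number of partitions of $V(T)$ with at most $c\epsilon^3 n$ crossing tree-edges by $2^{O(\alpha\epsilon^3 n)}$ via an entropy estimate, show by a Chernoff bound and a union bound over these partitions that whp every balanced partition of $V(T)$ has at least $c\epsilon^3 n$ crossing edges in $G(n,p)$, then delete the few high-degree vertices (Lemma \ref{l:highdegree}) so that the remaining bipartite adjacency matrix has row sums $M=O(\log\frac{1}{\epsilon})$, and apply Lemmas \ref{l:rank} and \ref{l:smallcut} to get $\textrm{rw} = \Omega\bigl(\epsilon^3/(\log\frac{1}{\epsilon})^3\bigr)n$. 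In the weakly supercritical regime the paper likewise never cuts the subdivided graph: it extracts a $3$-regular expander $H$ as an \emph{induced topological minor} and uses that rank-width is monotone under vertex-minors (Lemma \ref{l:indtopminorvtxminor}), so no path-length loss ever enters. To rescue your route for constant $\epsilon$ you would need both the kernel degree-sequence input and an argument carried out on the kernel itself rather than on the subdivision.
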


From this it is possible to give a corresponding lower bound on the tree-width of $G(n,p)$, however with a more direct argument we can remove some of the polylogarithmic factors from the result.

\begin{theorem}\label{t:treewidthdirect}
Let $\epsilon>0$ be sufficiently small and let $p=\frac{1+\epsilon}{n}$. Then whp 
$$\textrm{tw}\left(G\left(n,p\right)\right) = \Omega\left( \frac{\epsilon^3}{\log \frac{1}{\epsilon}} \right) n.$$
\end{theorem}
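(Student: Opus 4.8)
The plan is to use the technique of Luczak and McDiarmid \cite{LuczakMcdiarmid}, which bounds tree-width from below via the existence of a well-linked set, or equivalently via a "highly connected" set of vertices that cannot be separated by small balanced cuts. Concretely, recall that if a graph $G$ contains a set $W$ of vertices such that for every partition $W = W_1 \cup W_2$ there is no vertex separator of size less than $k$ separating $W_1$ from $W_2$ in a balanced way, then $\textrm{tw}(G) = \Omega(k)$. The first step will therefore be to pass to the giant component of $G(n,p)$ and, inside it, identify a large vertex set which is robustly connected in this sense. The natural candidate is a set of vertices each of which is an endpoint of many internally disjoint paths within the 2-core, exploiting the known structure of the weakly supercritical giant: its 2-core has order $\Theta(\epsilon^2 n)$ and, after contracting the pendant trees hanging off it, looks like a sparse expander-ish graph on $\Theta(\epsilon^2 n)$ vertices with $\Theta(\epsilon^3 n)$ excess edges.

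The second step is to quantify the linkedness directly, avoiding the black-box expansion result of Theorem \ref{t:expansion}. Here I would argue at the level of a breadth-first-search / branching-process exploration of $G(n,p)$ from a typical vertex of the giant: since $p = \frac{1+\epsilon}{n}$, the exploration behaves like a Galton–Watson process with offspring mean $1+\epsilon$, and standard estimates show that the ball of radius $\Theta\left(\frac{1}{\epsilon}\log\frac{1}{\epsilon}\right)$ around a giant vertex contains $\Theta\left(\frac{1}{\epsilon}\right)$ vertices of the 2-core whp, while two such balls around a positive fraction of pairs of giant vertices are disjoint. Covering the 2-core by such balls and applying a union bound / first-moment argument over all "small balanced separators" of size $s := c\,\frac{\epsilon^3}{\log\frac{1}{\epsilon}}\,n$ shows that no such separator can split a linear-sized chunk of the 2-core, because removing $s$ vertices kills only $O\!\left(s\cdot\frac{1}{\epsilon}\log\frac{1}{\epsilon}\right) = O(\epsilon^2 n)$ of the ball structure, which is too few to disconnect a constant fraction when the constant $c$ is chosen small. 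Turning this into a lower bound on tree-width via the Luczak–McDiarmid criterion then yields $\textrm{tw}(G) = \Omega\!\left(\frac{\epsilon^3}{\log\frac{1}{\epsilon}}n\right)$.

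The reason we can shave a $\log\frac{1}{\epsilon}$ factor relative to Theorem \ref{t:rankwidthdirect} is that for tree-width we only need to control \emph{vertex} separators and may work directly with the path structure of the 2-core, rather than going through a cut-rank / rank-width argument which loses an extra logarithmic factor in the translation; so the more direct argument alluded to in the paragraph preceding the statement is exactly this vertex-connectivity version of the same exploration.

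The main obstacle, I expect, will be making the "two balls are disjoint for a positive fraction of pairs" claim precise with the right dependence on $\epsilon$: one needs that balls of radius $\Theta\!\left(\frac{1}{\epsilon}\log\frac{1}{\epsilon}\right)$ have size only $\mathrm{poly}\!\left(\frac{1}{\epsilon}\right)$ rather than, say, exponential in the radius, which requires carefully tracking that the exploration process, once it has survived, grows only at rate $1+\epsilon$ and not faster — i.e. controlling the conditioned branching process and the depletion of unexplored vertices simultaneously. Handling the pendant trees (so that separating the 2-core vertices really does require separating them inside the whole graph, and subtrees don't provide cheap detours — which they cannot, being trees, but one must say so) and verifying the balanced-separator hypothesis of the Luczak–McDiarmid lemma for the specific set $W$ chosen are the remaining technical points, but these should be routine once the ball-size estimate is in hand.
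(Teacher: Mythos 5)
Your proposal does not close the step on which the whole theorem turns: the union bound over candidate balanced separators. To rule out all $\left(c\frac{\epsilon^3}{\log\frac{1}{\epsilon}}n,\frac{1}{2}\right)$-balanced partitions you must control not only the $\binom{n}{|S|}$ choices of separator $S$ but also the number of ways the remaining vertices can split into the two sides $A$ and $B$; naively this is $2^{\Theta(\epsilon n)}$ and no first-moment calculation survives it. The Luczak--McDiarmid trick (which is what the paper actually does) is precisely designed to fix this: one first sprinkles with $p_1=\frac{1+\epsilon/2}{n}$ to find a tree $T$ on $\Theta(\epsilon n)$ vertices with $\Delta(T)=O\bigl(\log\frac{1}{\epsilon}\bigr)$ (this is Theorem \ref{t:largetree}, and the degree bound is where the single $\log\frac{1}{\epsilon}$ loss comes from), observes that any separator $S$ splits $T$ into at most $\Delta(T)|S|$ components so that at most $2^{\Delta(T)|S|}$ partitions have separator $S$, and then sprinkles the remaining $p_2\geq\frac{\epsilon}{2n}$ edges to kill each fixed balanced partition with probability $1-\exp\bigl(-\Omega(\epsilon^3 n)\bigr)$, finishing with Lemma \ref{l:twsep}. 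Your sketch never supplies any substitute for this counting step.

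Moreover, the quantitative core of your ball-covering argument is vacuous as stated: with $s=c\frac{\epsilon^3}{\log\frac{1}{\epsilon}}n$ and ball radius $\Theta\bigl(\frac{1}{\epsilon}\log\frac{1}{\epsilon}\bigr)$, the quantity $s\cdot\frac{1}{\epsilon}\log\frac{1}{\epsilon}=c\,\epsilon^2 n$ is of the same order as the entire $2$-core (which has $(2+o(1))\epsilon^2 n$ vertices), so ``too few to disconnect a constant fraction'' does not follow from it. What would actually be needed is that no small vertex set produces a balanced split of the $2$-core, and since the $2$-core consists of long bare paths (typical length $\Theta(1/\epsilon)$) joined at a kernel of order $\Theta(\epsilon^3 n)$, that statement is genuinely an expansion property of the kernel --- exactly the kind of input the theorem is meant to avoid, and which in this paper is only established later (Section \ref{s:weakly}, via the configuration-model coupling to a random cubic graph), where it yields the stronger $\Omega(\epsilon^3 n)$ bound of Theorem \ref{t:weaklysup} rather than the bound claimed here. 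So either you import that machinery (and then you are proving a different, stronger theorem by a longer route), or you need the spanning-tree counting argument; as written, the proposal proves neither.
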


Since we are interested in the behaviour of these parameters as functions of $\epsilon$ as $\epsilon \to 0$, we have stated the above results only for the \emph{barely supercritical regime}. However, we note that, since tree-width is an increasing graph parameter, Theorem \ref{t:treewidthdirect} clearly implies Theorem \ref{t:treewidthrandom} \ref{i:supertree}. In the case of Theorem \ref{t:rankwidthdirect} it is not immediate, but it is a simple exercise to adapt the proof to cover the range of $p$ in Theorem \ref{t:rankwidthrandom} \ref{i:superrank}. In fact, it can be seen that both of these proofs are even effective all the way to the critical window, holding in the \emph{weakly supercritical regime} where $\epsilon^3 n \to \infty$.

However, neither Theorem \ref{t:rankwidthdirect} nor \ref{t:treewidthdirect} is optimal in terms of their dependence on $\epsilon$ when $\epsilon=o(1)$. Nevertheless, using a different method, also based on the expansion properties of $G(n,p)$, we can remove the extra polylogarithmic terms in Theorems \ref{t:rankwidthdirect} and \ref{t:treewidthdirect}, and obtain an asymptotically optimal bound on the rank- and tree-width of $G(n,p)$ in this regime.

\begin{theorem}\label{t:weaklysup}
Let $\epsilon=\epsilon(n) >0$ be such that $\epsilon=o(1)$ and $\epsilon^3 n \to \infty$, and let $p = \frac{1+\epsilon}{n}$. Then whp
\[
\textrm{rw}\left(G\left(n,p\right)\right) = \Theta\left(\epsilon^3 n \right)  \qquad \text{ and } \qquad \textrm{tw}\left(G\left(n,p\right)\right) = \Theta\left(\epsilon^3 n \right) .\]
\end{theorem}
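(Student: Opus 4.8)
The plan is to establish the matching upper and lower bounds on the two parameters separately, using \eqref{e:twrw} to transfer each bound between them. For the upper bound it suffices to bound $\textrm{tw}(G)$ from above, since $\textrm{rw}(G) \leq \textrm{tw}(G)+1$; for the lower bound it suffices to bound $\textrm{rw}(G)$ from below, since then $\textrm{tw}(G) \geq \textrm{rw}(G)-1$. So the goal reduces to showing that whp $\textrm{tw}(G) = O(\epsilon^3 n)$ and $\textrm{rw}(G) = \Omega(\epsilon^3 n)$.

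\textbf{Upper bound.} This should follow from the coarse global structure of the giant component $C_1$. I would use the standard facts that in this regime whp every component of $G(n,p)$ other than $C_1$ is a tree or is unicyclic, and that the giant has small excess, $e(C_1)-|C_1| = O(\epsilon^3 n)$ (see \cite{DLP14}), and combine these with the elementary bound $\textrm{tw}(H) \leq e(H)-|V(H)|+2$ valid for any connected graph $H$: fix a spanning tree $T$, pick one endpoint of each of the $e(H)-|V(H)|+1$ non-tree edges to form a set $B$, observe that every cycle of $H$ contains a non-tree edge and hence meets $B$, so that $B$ is a feedback vertex set, and finally recall that deleting a feedback vertex set of size $k$ leaves a forest and so $\textrm{tw}(H)\leq k+1$. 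Applied to $C_1$ this gives $\textrm{tw}(C_1) = O(\epsilon^3 n)$, while the remaining components have tree-width at most $2$; hence $\textrm{tw}(G) = O(\epsilon^3 n)$, and then $\textrm{rw}(G) = O(\epsilon^3 n)$ by \eqref{e:twrw}.

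\textbf{Lower bound.} Here the object to work with is the \emph{kernel} $K$ of $G$, obtained from the $2$-core of $C_1$ by suppressing every vertex of degree $2$. In the weakly supercritical regime it is known \cite{DLP14} that whp $K$ is a connected multigraph on $\Theta(\epsilon^3 n)$ vertices, each of degree at least $3$ and of bounded average degree, and moreover that $K$ is distributed, up to contiguity, as a uniform random multigraph with its degree sequence. A routine first-moment estimate in the configuration model then shows that, after discarding the (whp negligible) set of atypically high-degree vertices together with the few vertices incident to loops or parallel edges, what remains is whp a simple graph $K'$ on $\Theta(\epsilon^3 n)$ vertices, of bounded maximum degree, that is an $\alpha$-edge-expander for some constant $\alpha>0$ \emph{independent of $\epsilon$}. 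Now the $2$-core is an induced subgraph of $G$, and suppressing a degree-$2$ vertex can be realised by a local complementation followed by a vertex deletion, so $K'$ is a vertex-minor of $G$ (and $K$ is a minor of $G$); hence $\textrm{rw}(G) \geq \textrm{rw}(K')$ and $\textrm{tw}(G) \geq \textrm{tw}(K')$. Finally, bounded-degree $\alpha$-edge-expanders on $m$ vertices have tree-width $\Omega(m)$ (a classical fact) and, by the argument behind Theorem \ref{t:rankwidthrandom} \ref{i:superrank} in \cite{LeeLeeOum}, also rank-width $\Omega(m)$; taking $m = |V(K')| = \Theta(\epsilon^3 n)$ gives $\textrm{tw}(G), \textrm{rw}(G) = \Omega(\epsilon^3 n)$ whp.

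\textbf{Main obstacle.} The delicate part is the lower bound, and within it the kernel estimates: one must invoke the structure of the young giant \emph{uniformly} over the whole range $\epsilon = o(1)$, $\epsilon^3 n \to \infty$, so that $|V(K)|$ concentrates on $\Theta(\epsilon^3 n)$, the degree sequence is under control, and — this is the crucial point that removes the polylogarithmic losses of Theorems \ref{t:rankwidthdirect} and \ref{t:treewidthdirect} — the edge-expansion constant of $K$ is bounded below independently of $\epsilon$. A secondary technical issue is to carry out the reduction from the multigraph $K$ to a simple bounded-degree expander through vertex-minor operations, which needs some care because suppressing degree-$2$ vertices can create parallel edges; since whp the number of such parallel edges and loops is of lower order than $|V(K)|$, this only affects the implied constants.
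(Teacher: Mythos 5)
Your upper bound is fine and is essentially the paper's argument (excess of the giant component plus \eqref{e:twrw}), and your lower-bound blueprint — pass to the $2$-core/kernel, use the configuration model, get a bounded-degree expanding substructure, transfer to $\textrm{rw}(G)$ by vertex-minor monotonicity and the Lee--Lee--Oum lemmas — is also the paper's. But the execution of the lower bound has two genuine gaps, and they sit exactly at the step that removes the polylogarithmic losses. First, the claim that after discarding the atypically high-degree vertices and the vertices incident to loops or parallel edges, ``what remains is whp a simple graph $K'$ \ldots that is an $\alpha$-edge-expander'' is not a routine first-moment statement and no argument is offered for it: edge-expansion is not preserved under deleting a (random, graph-dependent) vertex set — the remaining induced subgraph can have isolated vertices, small disconnected pieces, and badly expanding sets near the deleted vertices. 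This is precisely the difficulty the paper's Lemma \ref{l:indtopmin} is designed to avoid: there the high-degree cells are removed, the structure is re-cored and the bare paths are contracted \emph{inside the configuration model, before exposing the remaining matching}, so that by deferred decisions the surviving object is distributed exactly as a uniform cubic multigraph $G^*(m,3)$ with $m\geq D_3-\sum_{i\geq 4}iD_i=\Theta(\epsilon^3 n)$, whose expansion is a known black box (Theorem \ref{t:regularexpander}). If you insist on your route, note you do not actually need $K'$ to be an expander — it suffices that every balanced bipartition of the kernel induces $\Omega(\epsilon^3 n)$ edges between bounded-degree vertices, which would follow from expansion of the whole kernel plus $\sum_{i\geq4}iD_i=o(\epsilon^3 n)$ (Lemma \ref{l:luczak}), in the spirit of the proof of Theorem \ref{t:rankwidthdirect}; but then you still owe a proof that a uniform multigraph with the kernel's irregular degree sequence is an expander with $\alpha$ independent of $\epsilon$, which is again not something you can just cite.

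Second, the vertex-minor step is not correct as stated. Suppressing a degree-$2$ vertex $x$ with neighbours $u,v$ is realised by $G*x-x$ only when $uv\notin E(G)$ (this is the content of Lemma \ref{l:indtopminorvtxminor}); if $u$ and $v$ are already adjacent — exactly the situation in which the kernel acquires a parallel edge, e.g.\ two bare paths of the $2$-core joining the same pair of branch vertices — the local complementation \emph{deletes} the edge $uv$, so the resulting graph is not the (simple version of the) kernel. Hence the kernel need not be a vertex-minor of $G$, and your remark that the parallel edges ``only affect the implied constants'' does not address this: the problem is not their number but that the simulation of suppression fails on them, corrupting adjacencies. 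The standard fix is either to retain one subdivision vertex on every offending edge, i.e.\ to work with an induced topological minor of $G$ rather than the kernel itself — which is exactly the formulation the paper adopts in Theorem \ref{t:weakexpander} — or to delete the vertices involved, which feeds back into the first gap about preserving the cut/expansion properties after deletion. (For the tree-width lower bound alone, minor-monotonicity would save you, but since you derive $\textrm{tw}$ from $\textrm{rw}$ via \eqref{e:twrw}, the rank-width argument must be watertight.) So the overall strategy is sound and parallel to the paper's, but as written these two steps do not constitute a proof.
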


We note that the upper bounds in Theorem \ref{t:weaklysup} follow quite easily from relatively standard facts about $G(n,p)$. Indeed, it is well-known that in this regime of $p$ whp the largest component $L_1$ of $G(n,p)$ has $\Theta(\epsilon n)$ vertices and $\Theta(\epsilon^3 n)$ \emph{excess} edges, and all other components are trees or unicyclic. In particular, the tree-width of all components but the largest is at most one, and so the tree-width of $G(n,p)$ is at most $\max\{ 1, \textrm{tw}(L_1) \}$. The bound then follows from the fact that tree-width of a graph is at most the number of excess edges. 

\subsection{Techniques and outline of the paper}
A well-known result in structural graph theory says that the tree-width of a graph $G$ can be bounded from below by the size of the smallest \emph{balanced separation}, a partition of $V(G)$ into three pieces $(A,S,B)$ with $\min |A|,|B| \geq \frac{|V(G)\setminus S|}{3}$ and $e(A,B)=0$, i.e., there are no edges between $A$ and $B$. The \emph{size} of such a separation is the order $|S|$ of the separator. The idea of the proofs of Kloks \cite{Kloks} and of Gao \cite{Gao} is to bound the tree-width by showing the likely non-existence of small separators using a union bound, which turns out to only be effective when $\epsilon = np-1$ is sufficiently large, due to the large number of possible separators.

A useful trick that can reduce the possible number of separators to consider, following a technique of Luczak and McDiarmid \cite{LuczakMcdiarmid}, is to first find a spanning tree of the graph, and since tree-width is decreasing under taking subgraphs, it suffices to first find a large tree in the random graph. Now, classical results on the phase transition already imply that whp a supercritical random graph contains a tree $T$ on $\Omega(\epsilon n)$ vertices, and with a bit of care it is possible to control the maximum degree of this tree (as a function of $\epsilon$).

Restricting our attention to the vertex set $V(T)$, we can show with a sprinkling argument that it is very unlikely that any fixed partition of $V(T)$ forms a balanced separation. Unfortunately, since each subset $S \subseteq V(T)$ could potentially be the separator in many balanced separations, we cannot naively complete the argument with a union bound.

However, since $T$ spans $V(T)$, any subset $S\subseteq V(T)$ splits $T$ into at most $\Delta(T) |S|$ many components, and hence there are at most $2^{\Delta(T) |S|}$ separations whose separator is $S$. This rather simple observation turns out to be powerful enough to use a union bound to show the likely non-existence of small balanced separators.

We note that, in a similar manner, the existence of such a tree reduces the possible number of small balanced edge cuts of $V(T)$, an idea already used by Spencer and T\'{o}th \cite{SpencerToth} in their work on the crossing number of random graphs. This in turn, in a similar manner as for tree-width, can be used to give a lower bound for the rank-width of $G(n,p)[V(T)]$, allowing us to use this technique to also prove Theorem \ref{t:rankwidthdirect}. 

For the proof of Theorem \ref{t:weaklysup}, we follow a similar approach to that of Lee, Lee and Oum \cite{LeeLeeOum}, and of Perarnau and Serra \cite{Serra}, and consider the expansion properties of the giant component of $G(n,p)$. However, unlike in the supercritical regime, it is not the case that the giant component typically contains a large expanding subgraph in this regime of $p$. Instead, using standard estimates on the degree distribution of the kernel of $G(n,p)$, we show that whp the kernel contains as an \emph{induced topological minor} a large subgraph $H$ which is distributed as a random $3$-regular graph, which is known to whp have good expansion properties. This implies that $H$ does not contain any small balanced edge cuts, which, together with the fact that $\Delta(H) =3$, is sufficient to bound the rank-width of $H$ from below. Finally, since rank-width can be shown to be decreasing under taking induced topological minors, we can conclude that the rank-width of $G(n,p)$ is at least the rank-width of $H$.

The paper is structured as follows. In Section \ref{s:Pre} we give preliminary definitions and results. We give our proofs of Theorems \ref{t:rankwidthdirect} and \ref{t:treewidthdirect} in Section \ref{s:direct} and then prove Theorem \ref{t:weaklysup} in Section \ref{s:weakly}. Some final discussion of open and related problems is given in Section \ref{s:discussion}.

\subsection*{Notation}
Unless otherwise specified, all logarithms will be the natural logarithm. Throughout the paper we will suppress floor and ceiling signs for ease of presentation.

\section{Preliminaries}\label{s:Pre}
A \textit{tree-decomposition} of a graph $G$ is a pair $(T, \mathcal{V})$ where $T$ is a tree and $\mathcal{V} = \{ V_x \subseteq V(G) \colon x \in V(T) \}$ is a family of subsets of $V(G)$,
    %, which we call \emph{bags}
such that the following conditions hold:
\begin{itemize}
    \item For all $v\in V(G)$, the set $\left\{x\in V(T): v\in V_x\right\}$ induces a non-empty subtree of $T$;
    \item For all $uv\in E(G) $, there is some bag $V_x$ containing both $u$ and $v$.
\end{itemize}
The \textit{width} of $(T, \mathcal{V})$ is $\max \{ |V_x| - 1 \colon x \in V(T) \}$. The \textit{tree-width} $\text{tw}(G)$ of $G$ is the minimum width over all tree-decompositions of $G$. 

We say that a set $S\subseteq V(G)$ is a \emph{$(k,\alpha)$-separator} if $|S| = k$ and every component of $G \setminus S$ has size at most $\alpha|V(G) \setminus S|$. The \emph{$\alpha$-separation number} $\text{sep}_{\alpha}(G)$ of $G$ is the smallest size $k$ of a $(k,\alpha)$-separation in $G$. A classic result of Robertson and Seymour \cite{Robertson} bounds the tree-width of a graph from below by the size of the smallest $\left(k,\frac{1}{2}\right)$-separation, although it is phrased in different terms. 
\begin{lemma}[See \cite{Harvey}]\label{l:twsep}
For any graph $G$, $\textrm{tw}(G) \geq \text{sep}_{\frac{1}{2}}(G) -1$.
\end{lemma}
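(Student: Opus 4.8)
The inequality is equivalent to $\text{sep}_{\frac{1}{2}}(G) \le \textrm{tw}(G)+1$, so the plan is to produce, from an optimal tree-decomposition of $G$, a $(k,\frac{1}{2})$-separator with $k \le \textrm{tw}(G)+1$. I would first pass to a \emph{smooth} tree-decomposition $(T,\mathcal{V})$ of $G$ of width $w := \textrm{tw}(G)$, that is, one in which every bag has size exactly $w+1$ and every adhesion set $V_x \cap V_y$ (for $xy \in E(T)$) has size exactly $w$; it is well known that such a decomposition exists. Set $m := |V(G)|-w$, so that for each edge $xy$ of $T$ the set $V(G) \setminus (V_x \cap V_y)$ has exactly $m$ elements. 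Throughout I would use the standard dictionary between $T$ and separations of $G$: for an edge $e=xy$ of $T$, writing $W_x$ and $W_y$ for the unions of the bags on the two sides of $e$, one has $W_x \cup W_y = V(G)$, $W_x \cap W_y = V_x \cap V_y$, and $G$ has no edge between $W_x \setminus V_y$ and $W_y \setminus V_x$; moreover, at a node $t$ with neighbours $z_1,\dots,z_d$, the sets $W_{z_i} \setminus V_t$ are pairwise disjoint, partition $V(G)\setminus V_t$, and each component of $G-V_t$ lies inside one of them.

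The core of the argument is a tree-orientation trick. Orient each edge $e=xy$ of $T$ towards whichever of $W_x\setminus V_y$ and $W_y\setminus V_x$ has the most vertices (breaking ties arbitrarily); since these two sets partition the $m$-element set $V(G)\setminus(V_x\cap V_y)$, the smaller of them — the one the edge points away from — has at most $m/2$ vertices. Because $T$ is a finite tree, a directed walk that always follows an out-edge cannot revisit a vertex (that would force some edge of $T$ to be traversed in both directions), so it must terminate at a sink $t$, a node all of whose incident edges are oriented towards it. It follows that every branch $W_{z_i}\setminus V_t$ at $t$, and hence every component of $G-V_t$, has at most $m/2$ vertices.

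What remains — and what I expect to be the only delicate point — is to turn this into a separator satisfying the definition exactly, where every component of $G-S$ must have at most $\frac{1}{2}|V(G)\setminus S|$ vertices. If some component $C_0$ of $G-V_t$ has exactly $m/2$ vertices (which forces $m$ to be even), then, since every branch at $t$ has at most $m/2$ vertices, $C_0$ must be the whole of the unique branch $W_z\setminus V_t$ of that size; I would then take $S := V_t\cap V_z$, of order $w$, and check that $W_z\setminus V_t$ is exactly one component of $G-S$, of order $m/2 = \frac{1}{2}|V(G)\setminus S|$, while the complementary side $W_t\setminus V_z$ also has only $m/2$ vertices in total, so each of its components is within the bound. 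Otherwise every component of $G-V_t$ has at most $\lfloor \frac{m-1}{2}\rfloor \le \frac{1}{2}|V(G)\setminus V_t|$ vertices, and $S:=V_t$ works directly. In either case $S$ is a $(k,\frac{1}{2})$-separator with $k\le w+1$, giving $\text{sep}_{\frac{1}{2}}(G)\le \textrm{tw}(G)+1$. The orientation argument and the separation properties quoted above are entirely classical; this recovers the bound of Robertson and Seymour, and a detailed treatment can be found in \cite{Harvey}.
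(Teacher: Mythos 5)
Your proof is correct, and in fact the paper does not prove this lemma at all: it is quoted as a classical result of Robertson and Seymour with a pointer to the Harvey--Wood survey, so there is no internal argument to compare against. What you give is a complete, self-contained proof along the standard lines: pass to a smooth tree-decomposition, orient each edge of $T$ towards the heavier of the two sides $W_x\setminus V_y$, $W_y\setminus V_x$, follow out-edges to a sink $t$ (your observation that a revisit would force some tree edge to be traversed in both directions is sound, since the first edge of any closed walk in a tree is a cut edge and must be recrossed), and conclude that every component of $G-V_t$ has at most $m/2$ vertices. You also correctly identify and resolve the genuinely delicate point, which many informal statements gloss over: with $S=V_t$ one only gets the bound $m/2$ against the target $\tfrac12|V(G)\setminus V_t|=\tfrac{m-1}{2}$, and your fix in the exceptional case (a component of size exactly $m/2$, necessarily equal to a full branch $W_z\setminus V_t$, after which $S=V_t\cap V_z$ of size $w$ works because $W_z\setminus S$ and $W_t\setminus S$ each have exactly $m/2$ vertices and no edges between them) is valid; here smoothness is exactly what guarantees $|V(G)\setminus(V_x\cap V_y)|=m$ for every tree edge and $|V_t\cap V_z|=w$. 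Two minor remarks: the claim that $W_z\setminus V_t$ is \emph{exactly one} component of $G-S$ is true (no edges leave $W_z\setminus S$) but not needed, since it suffices that every component of $G-S$ lies inside one side of the separation; and the uniqueness of the branch of size $m/2$ follows because two such branches would exceed $|V(G)\setminus V_t|=m-1$. Neither affects correctness, and your conclusion $\text{sep}_{\frac12}(G)\leq \textrm{tw}(G)+1$ is exactly the cited bound.
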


For ease of presentation we will want to work with a slightly different notion of a balanced separation. We call a partition $(A,S,B)$ of $V(G)$ a \emph{ $\left(k,\frac{1}{2}\right)$-balanced partition} if $|S| \leq k$, $e(A,B)=0$, and $\frac{1}{3} |V(G) \setminus S| \leq |A|,|B| \leq \frac{2}{3} |V(G) \setminus S|$.

\begin{lemma}\label{l:balancedsep}
Let $G=(V,E)$ be a graph and let $S$ be a $\left(k,\frac{1}{2}\right)$-separator of $G$. Then there exist $A,B \subseteq V$ (not necessarily unique) such that $(A,S,B)$ is a $\left(k,\frac{1}{2}\right)$-balanced partition.
\end{lemma}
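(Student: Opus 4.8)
The plan is to build $A$ and $B$ as unions of connected components of $G \setminus S$, so that the condition $e(A,B)=0$ holds automatically: every edge of $G$ not incident to $S$ lies inside a single component of $G\setminus S$, hence inside $A$ or inside $B$. Writing $n' := |V\setminus S|$ and letting $C_1,\dots,C_m$ be the components of $G\setminus S$, ordered so that $|C_1|\ge|C_2|\ge\dots\ge|C_m|$, the problem reduces to the one-dimensional task of distributing the sizes $|C_i|$ into two groups each of total size in $\left[\tfrac13 n',\tfrac23 n'\right]$. (If $n'=0$ we may take $A=B=\emptyset$, so assume $n'>0$; note this also forces $m\ge 2$, since each $|C_i|\le\tfrac12 n'<n'$.)

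To do this I would use a greedy argument. Let $t$ be the least index with $\sum_{i\le t}|C_i|\ge\tfrac13 n'$ — such $t$ exists since the full sum is $n'$ — and set $A := C_1\cup\dots\cup C_t$ and $B := C_{t+1}\cup\dots\cup C_m$. By the choice of $t$ we have $|A|\ge\tfrac13 n'$, and it remains only to check that $|A|\le\tfrac23 n'$; since $|B|=n'-|A|$, this simultaneously gives $\tfrac13 n'\le|B|\le\tfrac23 n'$, and then $(A,S,B)$ is the desired $\left(k,\tfrac12\right)$-balanced partition, using also that $|S|=k\le k$.

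The only point requiring care — the main obstacle, though a minor one — is the upper bound $|A|\le\tfrac23 n'$, where one must distinguish whether the largest component is itself already large. If $t=1$, then $|A|=|C_1|\le\tfrac12 n'\le\tfrac23 n'$ by the defining property of a $\left(k,\tfrac12\right)$-separator. If $t\ge 2$, then minimality of $t$ forces $|C_1|<\tfrac13 n'$, hence $|C_i|\le|C_1|<\tfrac13 n'$ for every $i$; combining $\sum_{i<t}|C_i|<\tfrac13 n'$ (again by minimality of $t$) with $|C_t|<\tfrac13 n'$ yields $|A|<\tfrac23 n'$. In both cases the argument closes, completing the proof.
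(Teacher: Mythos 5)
Your proposal is correct and follows essentially the same argument as the paper: order the components of $G\setminus S$ by non-increasing size, greedily take the minimal prefix whose total size reaches $\frac{1}{3}|V\setminus S|$, and split into the cases where that prefix is a single component (using the $\frac{1}{2}$-separator bound) or several (using minimality to bound each component by $\frac{1}{3}|V\setminus S|$). The only differences are cosmetic, such as your explicit treatment of the trivial case $V\setminus S=\emptyset$.
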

\begin{proof}
Let $G'=G[V \setminus S]$ and let  $A_1,A_2, \ldots, A_s$ be the vertex sets of the components of $G'$, in non-increasing order of size. Let $j$ be minimal such that $\left| \bigcup_{i=1}^j A_i \right| \geq \frac{1}{3}|G'|$. Since $S$ is a $\left(k,\frac{1}{2}\right)$-separator, $|A_1| \leq \frac{1}{2} |G'| \leq \frac{2}{3} |G'|$, and if $j \neq 1$ then $|A_j| \leq |A_1| \leq \frac{1}{3} |G'|$ and so $\left| \bigcup_{i=1}^j A_i \right| \leq \frac{2}{3} |G'|$. In either case it follows that $A =  \bigcup_{i=1}^j A_i$ and $B =  \bigcup_{i=j+1}^s A_i$ is the desired partition.
\end{proof}

Given a graph $G$ and two subsets $V_1,V_2 \subseteq V(G)$, we let $N_{V_1,V_2}$ be the adjacency matrix whose rows are labelled by $V_1$ and whose columns are labelled by $V_2$, so that the entry $\left(N_{V_1,V_2}\right)_{v_1,v_2} = 1$ if and only if $v \in V_1$ and $v_2 \in V_2$ are adjacent, and otherwise it is $0$. The \emph{cutrank} of $V_1$ and $V_2$, which we denote by $\rho_G(V_1,V_2)$, is the rank of $N_{V_1,V_2}$ over $\mathbb{F}_2$, which we write as $\textrm{rank}(N_{V_1,V_2})$. A tree is \emph{subcubic} if every vertex has degree one or three. A \emph{rank-decomposition} of a graph $G$ is a pair $(T,f)$ where $T$ is a subcubic tree and $f$ is a bijection from $V(G)$ to the set of leaves of $T$.

Deleting an edge $e=uv$ of $T$ splits $T$ into two components $C_u$ and $C_v$ containing $u$ and $v$, respectively. If we let $A_{uv} = f^{-1}(C_u)$ and $B_{uv} = f^{-1}(C_v)$, then we can define the \emph{rank-width} of $G$ to be
\[
\textrm{rw}(G) = \min_{(T,f)} \max_{uv \in E(T)} \rho_G(A_{uv},B_{uv}),
\]
where we take the minimum over all rank-decompositions $(T,f)$ of $G$.
We will use the following lemmas of Lee, Lee and Oum \cite{LeeLeeOum}. The first is a relatively standard statement bounding a width parameter from below by the non-existence of a balanced separation of low order.

\begin{lemma}[{\cite[Lemma 2.1]{LeeLeeOum}}]\label{l:smallcut}
Let $G$ be a graph with at least two vertices. If the rank-width of $G$ is at most $k$, then there exists a partition $V(G) = A \cup B$ such that $|A|,|B| \geq  \frac{|G|}{3}$ and $\rho_G(V_1,V_2) \leq k$.
\end{lemma}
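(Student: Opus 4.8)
The plan is to run a standard centroid-edge argument on an optimal rank-decomposition. Since $\textrm{rw}(G) \leq k$, I would fix a rank-decomposition $(T,f)$ of $G$ with $\max_{uv \in E(T)} \rho_G(A_{uv},B_{uv}) \leq k$; recall that $T$ is then a subcubic tree whose leaves are in bijection via $f$ with $V(G)$, and that for \emph{every} edge $e=uv$ of $T$ the partition $(A_{uv},B_{uv})$ of $V(G)$ induced by the two components of $T-e$ satisfies $\rho_G(A_{uv},B_{uv}) \leq k$. So it suffices to find a single edge $e$ of $T$ both of whose sides contain at least $|G|/3$ leaves of $T$; taking $A,B$ to be the two parts of the partition induced by that edge then completes the argument.

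Writing $n=|G|$, the case $n=2$ is trivial, since a subcubic tree has no vertex of degree $2$ and hence $T$ is a single edge, inducing a partition into two parts of size $1 \geq n/3$. For $n \geq 3$ every internal (non-leaf) vertex of $T$ has degree exactly $3$, and I would locate the required edge as follows. Orient each edge $e=uv$ of $T$ towards whichever endpoint lies in the component of $T-e$ with more leaves, breaking ties arbitrarily. Since a finite tree admits no directed cycle under any orientation, some vertex $w$ has out-degree $0$. If $w$ were a leaf then its unique incident edge would point towards $w$, forcing $1 \geq n-1$, impossible for $n \geq 3$; so $w$ is internal, with three incident edges $e_1,e_2,e_3$ all directed towards $w$. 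Let $n_i$ be the number of leaves of $T$ in the component of $T-e_i$ avoiding $w$, so that $n_1+n_2+n_3=n$, and since $e_i$ points towards $w$ we get $n_i \leq n-n_i$, i.e.\ $n_i \leq n/2$. Picking the index $i$ with $n_i$ maximal then gives both $n_i \geq n/3$ (as $3n_i \geq n_1+n_2+n_3$) and $n-n_i \geq n/2 \geq n/3$, so $e_i$ is the desired balanced edge.

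I do not expect a genuine obstacle here: the argument is elementary and self-contained, and uses nothing about the cutrank beyond the width bound supplied by the decomposition. The only points that need a little care are verifying that the out-degree-zero vertex $w$ is forced to be internal rather than a leaf, and checking that the arbitrary tie-breaking in the orientation is harmless, since $n_i > n/2$ would make $e_i$ point away from $w$ and therefore cannot occur.
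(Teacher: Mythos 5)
Your proof is correct: the centroid-edge argument on an optimal rank-decomposition (orient edges toward the leaf-heavier side, take a sink, which for $|G|\geq 3$ must be an internal degree-three vertex, and pick its incident edge with the largest far side) is exactly the standard proof, and your handling of the $|G|=2$ case and of why the sink cannot be a leaf is sound. Note that the paper itself gives no proof here—it quotes the lemma from Lee, Lee and Oum—so there is nothing to diverge from; your argument matches the one in that source, which proves the partition bound $\rho_G(A,B)\leq k$ (the ``$\rho_G(V_1,V_2)$'' in the quoted statement is a typo for $\rho_G(A,B)$, which is what you establish).
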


The second is a useful technical lemma which bounds the rank of a matrix in terms of the size of its support.

\begin{lemma}[{\cite[Lemma 4.3]{LeeLeeOum}}]\label{l:rank}
Let $A$ be a matrix over $\mathbb{F}_2$ with at least $n$ non-zero entries. If each row and column contains at most $M$ non-zero entries then $\textrm{rank}(A) \geq \frac{n}{M^2}$.
\end{lemma}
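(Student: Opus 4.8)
The plan is to prove the bound by a rank-counting argument that confines all nonzero entries of $A$ to a small set of rows. Set $r = \textrm{rank}(A)$ (over $\mathbb{F}_2$); the goal is to show $n \le r M^2$, which is equivalent to the claimed inequality. Since the column space of $A$ has dimension $r$, I would first fix a set $J$ of $r$ column indices such that the corresponding columns $\{A_{\cdot j} : j \in J\}$ form a basis of the column space of $A$.

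The key observation is that every row of $A$ that contains a nonzero entry must already contain a nonzero entry in one of the columns indexed by $J$. Indeed, for an arbitrary column index $j_0$ the column $A_{\cdot j_0}$ is an $\mathbb{F}_2$-linear combination $\sum_{j \in J'} A_{\cdot j}$ for some subset $J' \subseteq J$; restricting this identity to a row $i$ gives $A_{i j_0} = \sum_{j \in J'} A_{ij}$, so if $A_{i j_0} = 1$ then $A_{ij} = 1$ for at least one $j \in J' \subseteq J$. Hence a row whose support is disjoint from $J$ has empty support.

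A double count then finishes the proof. Each of the $r$ columns indexed by $J$ has at most $M$ nonzero entries, so there are at most $rM$ nonzero entries in total among these columns; consequently at most $rM$ rows of $A$ meet $J$ in a nonzero entry, and by the previous paragraph these are the only rows with any nonzero entry at all. Each such row has at most $M$ nonzero entries, so $A$ has at most $rM \cdot M = rM^2$ nonzero entries. Since $A$ has at least $n$ nonzero entries by hypothesis, $n \le rM^2$, i.e.\ $\textrm{rank}(A) \ge n/M^2$.

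I do not expect a genuine obstacle here: the only step requiring a moment's thought is the reduction, namely that passing to a column basis localises the support of $A$ to few rows, and the rest is elementary counting. One could run the symmetric version using a row basis (by transposing), and it is precisely the combined use of the row-degree bound and the column-degree bound that produces the quadratic factor $M^2$.
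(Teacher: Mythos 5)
Your proof is correct: fixing a column basis, observing that every nonzero row must meet one of the basis columns, and double counting gives $n \le \textrm{rank}(A)\,M^2$ exactly as claimed. Note that the paper itself does not prove this lemma but quotes it as Lemma~4.3 of Lee, Lee and Oum \cite{LeeLeeOum}; your argument is essentially the standard one from that source (up to transposing, i.e.\ working with a row basis and confining the support to at most $\textrm{rank}(A)\cdot M$ columns), so there is nothing further to reconcile.
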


The \emph{bisection width} $b(G)$ of a graph $G$ is the minimum of $e(A,B)$ over all partitions $(A,B)$ of $V(G)$ such that $\frac{1}{3}|G|\leq |A|, |B|\leq \frac{2}{3}|G|$. We note that, Lemmas \ref{l:smallcut} and \ref{l:rank} allow us to bound the rank-width of a graph in terms of its bisection width and maximum degree.

Also, the following bound relating the rank- and tree-width of a graph will be useful.

\begin{theorem}[{\cite[Theorem 3]{Oum}}]\label{t:rwtw}
For every graph $G$
\[
\textrm{rw}(G) \leq \textrm{tw}(G) +1.
\]

\end{theorem}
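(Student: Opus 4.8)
The plan is to turn an optimal tree-decomposition of $G$ into a rank-decomposition of width at most one larger. Fix a tree-decomposition $(T,\mathcal V)$ of $G$ with $|V_t|\le k+1$ for all $t$, where $k:=\textrm{tw}(G)$ (the cases $|V(G)|\le 1$ being trivial). First I would put $(T,\mathcal V)$ into a convenient shape: for each $v\in V(G)$ pick a node $s(v)$ with $v\in V_{s(v)}$, attach to $s(v)$ a fresh pendant leaf $\ell_v$ with bag $\{v\}$, and then repeatedly delete every leaf of the current tree other than the $\ell_v$'s. A routine check (the only new bags have size $1$, and the node holding the unique bag that contains some $v$ carries the pendant $\ell_v$ and so is never a deletable leaf) shows this is still a tree-decomposition of width $k$, in which now the leaves are exactly $\{\ell_v:v\in V(G)\}$ and $v\in V_{\ell_v}$. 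Next, replace each node of degree larger than $3$ by a small tree of degree-$3$ nodes all carrying the same bag; this keeps the decomposition valid with the same width and does not change the leaves, so we may assume it has maximum degree at most $3$. Finally, suppressing the remaining degree-$2$ nodes produces a subcubic tree which, with $\phi(v):=\ell_v$, is a rank-decomposition of $G$; note that each suppression merges two tree edges into a single edge inducing the same partition of $V(G)$, so it does not enlarge the family of vertex-partitions arising from edges of the tree.

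The core of the argument is the following bound, applied to every edge of the valid (max-degree-$3$) tree-decomposition from the previous step. Let $e=t_1t_2$ be such an edge, let $C_1,C_2$ be the components of $T-e$ with $t_i\in C_i$, and let $(A,B)$ be the induced partition of $V(G)$, so $v\in A$ iff $\phi(v)\in C_1$. Put $S_e:=V_{t_1}\cap V_{t_2}$, noting $|S_e|\le k+1$. I claim that every edge $xy$ of $G$ with $x\in A$, $y\in B$ has an endpoint in $S_e$: some bag $V_t$ contains both $x$ and $y$; if $t\in C_1$ then the subtree $\{t':y\in V_{t'}\}$ contains $t\in C_1$ and $\phi(y)=\ell_y\in C_2$, hence contains the edge $e$, so $y\in V_{t_1}\cap V_{t_2}=S_e$; and symmetrically if $t\in C_2$ then $x\in S_e$. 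In other words $S_e$ separates $A\setminus S_e$ from $B\setminus S_e$ in $G$.

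Given the claim, the bound on the cut-rank is elementary. Every nonzero entry of the $\mathbb F_2$-matrix $N_{A,B}$ lies either in a row indexed by $A\cap S_e$ or in a column indexed by $B\cap S_e$, so $N_{A,B}=N'+N''$ with $N'$ supported on the rows in $A\cap S_e$ and $N''$ on the columns in $B\cap S_e$, whence
\[
\rho_G(A,B)=\textrm{rank}(N_{A,B})\le\textrm{rank}(N')+\textrm{rank}(N'')\le|A\cap S_e|+|B\cap S_e|=|S_e|\le k+1 .
\]
Since this holds for every edge of the tree, and the final suppression of degree-$2$ nodes does not introduce any new partition of $V(G)$, the resulting rank-decomposition has width at most $k+1$, giving $\textrm{rw}(G)\le\textrm{tw}(G)+1$. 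The only places I expect to need care are in the first paragraph — checking that the pruning and the passage to a subcubic tree genuinely preserve the width of the decomposition and the vertex–leaf correspondence — and the short case distinction in the separation claim; the concluding linear algebra is routine, being essentially the mechanism behind Lemma \ref{l:rank}.
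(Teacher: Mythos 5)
Your argument is correct, but note that the paper does not prove Theorem \ref{t:rwtw} at all: it is imported as a black box from Oum, whose original route factors through branch-width (first $\textrm{rw}(G)\le \textrm{bw}(G)$ for graphs with an edge, then the Robertson--Seymour bound $\textrm{bw}(G)\le \textrm{tw}(G)+1$). What you give instead is the direct, self-contained construction: attach a pendant bag $\{v\}$ for each vertex, prune, make the tree subcubic, suppress degree-$2$ nodes, and bound each cut-rank by the adhesion set $S_e=V_{t_1}\cap V_{t_2}$ via the observation that every crossing edge has an endpoint in $S_e$, so $\rho_G(A,B)\le |A\cap S_e|+|B\cap S_e|\le k+1$. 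All the steps you flag as ``routine'' really are: in the pruning step, a deletable leaf $t$ (one carrying no pendant) has every vertex of $V_t$ also in the bag of its unique neighbour (each such vertex $x$ has $x\in V_{\ell_x}$ with $\ell_x\ne t$, and the connected subtree for $x$ must pass through the neighbour), so both vertex and edge coverage survive deletion, and since the pendants are never removed the process stops with a tree whose leaves are exactly the $\ell_v$; the degenerate cases $|V(G)|\le 2$ cause no trouble. The rank bound by splitting $N_{A,B}$ into a part supported on the rows $A\cap S_e$ and a part supported on the columns $B\cap S_e$ is sound over $\mathbb{F}_2$ by subadditivity of rank. So your proof is a valid, more elementary alternative to the cited one: it buys independence from branch-width machinery at the cost of the bookkeeping in the first paragraph, whereas the paper's citation buys brevity.
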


If $G$ is a graph and $v \in V(G)$, then the \emph{local complementation of $G$ at $v$}, denoted by $G*v$, is the graph whose vertex set is $V(G)$ and whose edge set is the same as $E(G)$, except adjacency and non-adjacency are reversed in $N(v)$, see Figure \ref{f:complementation}. Two graphs are \emph{locally equivalent} if one can be obtained from the other by a sequence of local complementations. A graph $H$ is a \emph{vertex-minor} of $G$ if $H$ is an induced subgraph of a graph which is locally equivalent to $G$.

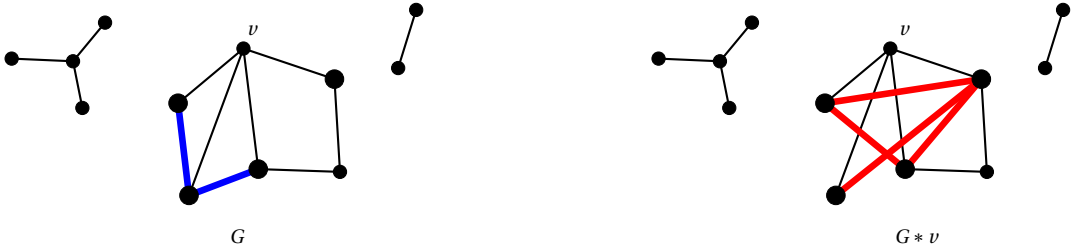
\begin{figure}[!ht]
    \centering
\definecolor{ududff}{rgb}{0.30196078431372547,0.30196078431372547,1.}
\definecolor{ffqqqq}{rgb}{1.,0.,0.}
\definecolor{qqqqff}{rgb}{0.,0.,1.}
\begin{tikzpicture}[line cap=round,line join=round,>=triangle 45,x=1.0cm,y=1.0cm]
\clip(3.,0.6) rectangle (18.,4.2);
\draw [line width=.8pt] (4.144345898004438,3.1845676274944568)-- (4.570066518847009,3.6989800443458978);
\draw [line width=.8pt] (4.144345898004438,3.1845676274944568)-- (3.3283813747228415,3.220044345898004);
\draw [line width=.8pt] (4.144345898004438,3.1845676274944568)-- (4.268514412416854,2.5637250554323727);
\draw [line width=.8pt] (6.4099778270510015,3.3539689578713974)-- (5.540798226164083,2.62669623059867);
\draw [line width=.8pt] (6.4099778270510015,3.3539689578713974)-- (6.606760681217132,1.7499862910821482);
\draw [line width=2.5pt,color=qqqqff] (5.540798226164083,2.62669623059867)-- (5.685959575664167,1.4000818709720222);
\draw [line width=2.5pt,color=qqqqff] (5.685959575664167,1.4000818709720222)-- (6.606760681217132,1.7499862910821482);
\draw [line width=.8pt] (8.706187201877892,3.867828833853963)-- (8.466778914434121,3.0943559051894742);
\draw [line width=.8pt] (6.4099778270510015,3.3539689578713974)-- (5.685959575664167,1.4000818709720222);
\draw [line width=.8pt] (7.69330598576963,1.7131542468600296)-- (6.606760681217132,1.7499862910821482);
\draw [line width=.8pt] (7.619641897325393,2.947027728301)-- (7.69330598576963,1.7131542468600296);
\draw [line width=.8pt] (6.4099778270510015,3.3539689578713974)-- (7.619641897325393,2.947027728301);
\draw [line width=.8pt] (12.744345898004408,3.1845676274944568)-- (13.170066518846978,3.6989800443458978);
\draw [line width=.8pt] (12.744345898004408,3.1845676274944568)-- (11.928381374722814,3.220044345898004);
\draw [line width=.8pt] (12.744345898004408,3.1845676274944568)-- (12.868514412416824,2.5637250554323727);
\draw [line width=.8pt] (15.009977827050971,3.3539689578713974)-- (14.140798226164053,2.62669623059867);
\draw [line width=.8pt] (15.009977827050971,3.3539689578713974)-- (15.206760681217101,1.7499862910821482);
\draw [line width=.8pt] (17.306187201877886,3.867828833853963)-- (17.06677891443411,3.0943559051894742);
\draw [line width=.8pt] (15.009977827050971,3.3539689578713974)-- (14.285959575664137,1.4000818709720222);
\draw [line width=.8pt] (16.293305985769603,1.7131542468600296)-- (15.206760681217101,1.7499862910821482);
\draw [line width=.8pt] (16.219641897325367,2.947027728301)-- (16.293305985769603,1.7131542468600296);
\draw [line width=.8pt] (15.009977827050971,3.3539689578713974)-- (16.219641897325367,2.947027728301);
\draw [line width=2.5pt,color=ffqqqq] (14.140798226164053,2.62669623059867)-- (16.219641897325367,2.947027728301);
\draw [line width=2.5pt,color=ffqqqq] (15.206760681217101,1.7499862910821482)-- (16.219641897325367,2.947027728301);
\draw [line width=2.5pt,color=ffqqqq] (14.285959575664137,1.4000818709720222)-- (16.219641897325367,2.947027728301);
\draw [line width=2.5pt,color=ffqqqq] (15.206760681217101,1.7499862910821482)-- (14.140798226164053,2.62669623059867);
\begin{scriptsize}
\draw [fill=black] (4.144345898004438,3.1845676274944568) circle (2.5pt);
\draw [fill=black] (4.570066518847009,3.6989800443458978) circle (2.5pt);
\draw [fill=black] (3.3283813747228415,3.220044345898004) circle (2.5pt);
\draw [fill=black] (4.268514412416854,2.5637250554323727) circle (2.5pt);
\draw [fill=black] (6.4099778270510015,3.3539689578713974) circle (2.5pt);
\draw [fill=black] (5.540798226164083,2.62669623059867) circle (3.5pt);
\draw [fill=black] (6.606760681217132,1.7499862910821482) circle (3.5pt);
\draw [fill=black] (5.685959575664167,1.4000818709720222) circle (3.5pt);
\draw [fill=black] (8.706187201877892,3.867828833853963) circle (2.5pt);
\draw [fill=black] (8.466778914434121,3.0943559051894742) circle (2.5pt);
\draw [fill=black] (7.619641897325393,2.947027728301) circle (3.5pt);
\draw [fill=black] (7.69330598576963,1.7131542468600296) circle (2.5pt);
\draw [fill=black] (12.744345898004408,3.1845676274944568) circle (2.5pt);
\draw [fill=black] (13.170066518846978,3.6989800443458978) circle (2.5pt);
\draw [fill=black] (11.928381374722814,3.220044345898004) circle (2.5pt);
\draw [fill=black] (12.868514412416824,2.5637250554323727) circle (2.5pt);
\draw [fill=black] (15.009977827050971,3.3539689578713974) circle (2.5pt);
\draw [fill=black] (14.140798226164053,2.62669623059867) circle (3.5pt);
\draw [fill=black] (15.206760681217101,1.7499862910821482) circle (3.5pt);
\draw [fill=black] (14.285959575664137,1.4000818709720222) circle (3.5pt);
\draw [fill=black] (17.306187201877886,3.867828833853963) circle (2.5pt);
\draw [fill=black] (17.06677891443411,3.0943559051894742) circle (2.5pt);
\draw [fill=black] (16.219641897325367,2.947027728301) circle (3.5pt);
\draw [fill=black] (16.293305985769603,1.7131542468600296) circle (2.5pt);
\draw[color=black] (6.533096592772894,3.6) node {$v$};
\draw[color=black] (15.2,3.6) node {$v$};
\draw (6.33, 0.85) node{$G$};
\draw  (15.37, 0.85) node {$G*v$};
\end{scriptsize}
\end{tikzpicture}
    \caption{The local complementation $G*v$ of a graph $G$ at a vertex $v$.}
    \label{f:complementation}
\end{figure}

It is shown in Oum \cite{Oum2005} that two locally equivalent graphs have the same rank-width, and so it is easy to see that 
\begin{equation}\label{e:vtxminor}
\text{if $H$ is a vertex-minor of $G$ then $\textrm{rw}(H) \leq \textrm{rw}(G)$,}
\end{equation}
since rank-width is non-increasing when taking induced subgraphs. A simple consequence of this fact and the following lemma is that rank-width is also non-increasing when taking induced topological minors.

\begin{lemma}\label{l:indtopminorvtxminor}
If $H$ is an induced topological minor of $G$ then $H$ is a vertex-minor of $G$.
\end{lemma}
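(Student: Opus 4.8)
The plan is to recover $H$ from a subdivided induced copy of $H$ inside $G$ by a sequence of \emph{smoothing} operations, each of which is a single local complementation followed by a single vertex deletion. Recall that $H$ being an induced topological minor of $G$ means there is a set $U\subseteq V(G)$ with $V(H)\subseteq U$ such that $G[U]$ is a subdivision of $H$ having branch set $V(H)$; write $W=U\setminus V(H)$ for the set of subdivision vertices, each of which has degree exactly two in $G[U]$. The basic observation is: if $K$ is a graph and $w\in V(K)$ has exactly two neighbours $a,b$ with $ab\notin E(K)$, then $K*w$ differs from $K$ only in that the pair $\{a,b\}$ becomes an edge, and $w$ still has exactly the neighbours $a,b$; hence $(K*w)\setminus w$ is obtained from $K$ by deleting $w$, i.e.\ by replacing the two edges $aw,wb$ with the single edge $ab$ and changing nothing else.

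I would apply this observation iteratively. Every subdivision vertex has two non-adjacent neighbours — indeed no subdivision vertex of a subdivision of a simple graph lies in a triangle — so, starting from $H'=G[U]$ and smoothing any subdivision vertex $w\in W$, the graph $(H'*w)\setminus w$ is again a subdivision of $H$ with branch set $V(H)$ but with one fewer subdivision vertex; iterating $|W|$ times produces $H$ itself. To express these moves in terms of local complementations performed in $G$ rather than in $G[U]$, I would use the elementary identity $(G*v)[S]=(G[S])*v$, valid whenever $v\in S$, since both sides toggle adjacency precisely on those pairs of vertices lying in both $S$ and $N_G(v)$. Listing $W=\{w_1,\dots,w_m\}$ in the order the smoothings are carried out and setting $G^{*}=G*w_1*\cdots*w_m$, a short induction on $i$ using this identity shows that $G^{*}[U\setminus\{w_1,\dots,w_i\}]$ is exactly the subdivision of $H$ obtained after the first $i$ smoothings; for $i=m$ this yields $G^{*}[V(H)]=H$. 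Since $G^{*}$ is locally equivalent to $G$, this exhibits $H$ as an induced subgraph of a graph locally equivalent to $G$, that is, as a vertex-minor of $G$.

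The steps needing genuine care are verifying that each smoothing really returns a subdivision of $H$ — i.e.\ that the two neighbours of the chosen subdivision vertex are non-adjacent, so the local complementation adds exactly one edge and disturbs nothing else — and checking the commutation identity $(G*v)[S]=(G[S])*v$, which is what lets the whole argument be run with local complementations taken in $G$ itself rather than in the induced subgraph $G[U]$. Everything else is routine bookkeeping.
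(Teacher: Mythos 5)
Your proof is correct and uses the same key idea as the paper: locally complementing at a degree-two subdivision vertex whose two neighbours are non-adjacent adds exactly the smoothed edge, so deleting that vertex performs the smoothing inside a graph locally equivalent to $G$. The paper simply reduces to the case of a single subdivided edge (treating the reduction as clear), whereas you carry out the iteration explicitly via the identity $(G*v)[S]=(G[S])*v$; this is just a more detailed write-up of the same argument.
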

\begin{proof}
It is clearly sufficient to prove the lemma in the case where $G$ is obtained from $H$ by subdividing a single edge $e = uv$ by a new vertex $x$. However, since $N_G(x) = \{u,v\}$ and $uv \not\in E(G)$, it follows that $E(G*x) = E(G) \cup  \{uv\}$ and so $H = G*x[V(H)]$. In particular, $H$ is a vertex-minor of $G$.
\end{proof}

We will use the following lemma of Krivelevich \cite{Krivelevich} on high degree vertices in $G(n,p)$.

\begin{lemma}[{\cite[Proposition 2]{Krivelevich}}]\label{l:highdegree}
Let $d \geq 0$, let $p = \frac{d}{n}$ and let $\delta >0$ be sufficiently small. Then whp every set of $\frac{\delta}{\log \frac{1}{\delta}} n$ vertices in $G(n,p)$ touches at most $\delta n$ edges.
\end{lemma}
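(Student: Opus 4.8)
The plan is a first-moment / union bound over all vertex subsets of the prescribed size, controlling each term by the elementary tail estimate for a binomial distribution. We may assume $d>0$ (for $d=0$ the graph is empty and there is nothing to prove), and we write $k=\frac{\delta}{\log(1/\delta)}n$, suppressing rounding. Note first that it suffices to bound the number of edges touching sets of size \emph{exactly} $k$, since any smaller set touches no more edges than some $k$-set containing it. Fix $S$ with $|S|=k$. The number of vertex pairs meeting $S$ is $\binom{n}{2}-\binom{n-k}{2}\le kn$, so the number $X_S$ of edges of $G(n,p)$ incident to $S$ is distributed as $\mathrm{Bin}(m,p)$ for some $m\le kn$; in particular $\mathbb{E}[X_S]\le knp=kd$.

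For a single such $S$, I would apply the bound $\Pr[\mathrm{Bin}(m,p)\ge t]\le\binom{m}{t}p^t\le(emp/t)^t$ with $t=\delta n$, which, since $m\le kn$ and $k\log(1/\delta)=\delta n$, gives
\[
\Pr\left[X_S\ge \delta n\right]\;\le\;\left(\frac{e\,knp}{\delta n}\right)^{\delta n}\;=\;\left(\frac{ekd}{\delta n}\right)^{\delta n}\;=\;\left(\frac{ed}{\log\frac{1}{\delta}}\right)^{\delta n}.
\]
Taking a union bound over the $\binom{n}{k}\le(en/k)^k=\bigl(e\delta^{-1}\log(1/\delta)\bigr)^{\delta n/\log(1/\delta)}$ choices of $S$, the probability that some $k$-set touches at least $\delta n$ edges is at most
\[
\left(\frac{e\log\frac1\delta}{\delta}\right)^{\frac{\delta n}{\log(1/\delta)}}\left(\frac{ed}{\log\frac1\delta}\right)^{\delta n}\;=\;\exp\!\left(\delta n\left[\frac{\log\!\bigl(e\delta^{-1}\log\tfrac1\delta\bigr)}{\log\frac1\delta}+\log\frac{ed}{\log\frac1\delta}\right]\right).
\]

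It then remains to check that the bracketed quantity is negative once $\delta$ is sufficiently small in terms of $d$. The first summand equals $\frac{1+\log\log(1/\delta)+\log(1/\delta)}{\log(1/\delta)}\to 1$ as $\delta\to 0$, while the second equals $\log(ed)-\log\log(1/\delta)\to-\infty$; hence the bracket tends to $-\infty$ and is at most $-1$, say, for $\delta$ small, making the displayed probability at most $e^{-\delta n}=o(1)$. This establishes the lemma. The computation is routine and there is no real obstacle; the only point requiring care is the bookkeeping of exponents, and the feature worth emphasising is that the normalisation $k=\delta n/\log(1/\delta)$ is chosen precisely so that $\log\binom{n}{k}\approx\delta n$ matches the scale of the forbidden edge count, leaving the genuinely small factor $(ed/\log\frac1\delta)^{\delta n}$ from the tail estimate to drive the probability to zero. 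If desired one can make the dependence explicit: the argument goes through whenever $\log(1/\delta)\ge Cd$ for a suitable absolute constant $C$.
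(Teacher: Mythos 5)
Your proof is correct. The paper does not prove this lemma at all — it is quoted directly from Krivelevich \cite{Krivelevich} — and your argument (union bound over the $\binom{n}{k}$ sets of size $k=\frac{\delta}{\log(1/\delta)}n$, with the binomial tail estimate $\Pr[\mathrm{Bin}(m,p)\ge \delta n]\le (emp/\delta n)^{\delta n}$ for $m\le kn$) is essentially the standard first-moment computation behind that cited proposition; the bookkeeping of exponents checks out, and the only point worth flagging, which you do, is that ``sufficiently small'' must be read as $\delta$ small in terms of $d$ (e.g.\ $\log\frac{1}{\delta}\ge Cd$), which is exactly how the lemma is applied in the paper.
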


We will need the following simple bound on the expectation of a truncated binomial distribution.

\begin{lemma}[{\cite[Lemma 2.5]{Erde}}]\label{l:restricted}
		Let $X \sim \text{Bin}(n,p)$ be a binomial random variable with $2enp < K$ for some constant $K>0$. If $Y = \min\{ X , K\}$, then
		\[
		\mathbb{E}(Y) \geq np - K2^{-K}.
		\]
	\end{lemma}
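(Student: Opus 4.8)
The plan is to express $\mathbb{E}(Y)$ as $np$ minus a small error term and then bound that error term by a routine tail estimate, using the hypothesis $2enp < K$ to force geometric decay of the relevant tail probabilities.

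First I would note that $X - Y = X - \min\{X,K\} = \max\{X-K,0\}$ is a non-negative integer-valued random variable, so by linearity of expectation $\mathbb{E}(Y) = \mathbb{E}(X) - \mathbb{E}(X-Y) = np - \mathbb{E}\big(\max\{X-K,0\}\big)$, and it suffices to bound the error term $\mathbb{E}\big(\max\{X-K,0\}\big)$ by $K2^{-K}$. Applying the standard identity $\mathbb{E}(Z) = \sum_{j\ge1}\mathbb{P}(Z\ge j)$ (valid for non-negative integer-valued $Z$) to $Z = \max\{X-K,0\}$, this error term equals $\sum_{j\ge1}\mathbb{P}(X \ge K+j)$. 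Alternatively, one could use the classical identity $j\binom{n}{j}p^j(1-p)^{n-j} = np\binom{n-1}{j-1}p^{j-1}(1-p)^{n-j}$ to rewrite $\mathbb{E}\big(X\mathbf{1}_{\{X\ge K\}}\big)$, which also upper-bounds the error term, in terms of a single tail probability of a $\mathrm{Bin}(n-1,p)$ variable; either route works.

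Next I would invoke the standard Chernoff bound $\mathbb{P}(X \ge a) \le \left(\tfrac{enp}{a}\right)^{a}$, valid for $a \ge np$. Since $2enp < K$, for each $j \ge 1$ we have $K+j > K > 2enp \ge np$, so the bound applies, and moreover $\tfrac{enp}{K+j} \le \tfrac{enp}{K} < \tfrac12$; hence $\mathbb{P}(X \ge K+j) \le 2^{-(K+j)}$. Summing the resulting geometric series gives $\mathbb{E}\big(\max\{X-K,0\}\big) \le \sum_{j\ge1} 2^{-(K+j)} = 2^{-K}$, and hence $\mathbb{E}(Y) \ge np - 2^{-K} \ge np - K2^{-K}$ whenever $K \ge 1$ (and for $K < 1$ the hypothesis $2enp < K$ already forces $np < K2^{-K}$, so the claimed inequality holds trivially since $\mathbb{E}(Y) \ge 0$).

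There is no genuine obstacle here — the statement is a standard fact about truncated binomials. The only points meriting a little care are choosing a clean form of the Chernoff bound that is valid at the (possibly non-integer) threshold $a$, and making sure the hypothesis $2enp < K$ is used exactly where it is needed, namely to guarantee $enp/a < 1/2$ so that the tail probabilities decay geometrically with ratio at most $\tfrac12$ rather than merely being $o(1)$.
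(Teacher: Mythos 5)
There is no in-paper proof to compare against: the paper imports this statement verbatim from \cite{Erde} (Lemma~2.5 there) and never proves it, so your argument has to stand on its own — and in essence it does. The decomposition $\mathbb{E}(Y)=np-\mathbb{E}\bigl(\max\{X-K,0\}\bigr)$, the tail bound $\mathbb{P}(X\ge a)\le\left(\frac{enp}{a}\right)^{a}$ for $a\ge np$, the use of $2enp<K$ precisely to make the ratio at most $\tfrac12$, and the geometric summation are exactly the standard route for such truncated-expectation bounds, and your separate treatment of small $K$ is the right way to close that case. The one genuine imprecision is the layer-cake identity $\mathbb{E}(Z)=\sum_{j\ge1}\mathbb{P}(Z\ge j)$: this requires $Z$ to be integer-valued, and $Z=\max\{X-K,0\}$ is not when $K\notin\mathbb{Z}$ — which is the relevant situation, since in the paper's application (Appendix~A) $K=4\log\frac{1}{\delta}$. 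As written, your sum omits the contribution of the event $K<X<K+1$: for instance $\mathbb{P}(Z\ge1)=\mathbb{P}(X\ge K+1)$ does not see the outcome $X=\lceil K\rceil$, which contributes $\lceil K\rceil-K>0$ to $\mathbb{E}(Z)$. The repair is immediate and stays within your argument: bound $\mathbb{E}(Z)\le\sum_{j\ge0}\mathbb{P}(Z>j)\le\sum_{j\ge0}\mathbb{P}(X\ge K+j)\le 2^{1-K}$, which is at most $K2^{-K}$ once $K\ge2$, while for $K\le\log_2(2e)$ the hypothesis $2enp<K$ already forces $np\le\frac{K}{2e}\le K2^{-K}$, so the claim is vacuous — the same trivial-case reasoning you invoked for $K<1$. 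Alternatively, your sketched second route via $\mathbb{E}\bigl(X\mathbf{1}_{\{X\ge K\}}\bigr)=np\,\mathbb{P}\bigl(\mathrm{Bin}(n-1,p)\ge K-1\bigr)$ sidesteps the integrality issue entirely and, with the same Chernoff bound and $\left(1+\frac{1}{K-1}\right)^{K-1}\le e$, lands exactly on $K2^{-K}$.
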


We will use the following Chernoff-type bound on the tail probabilities of the binomial distribution, see e.g., \cite[Appendix A]{Alon}.
\begin{lemma}\label{l:chernoff}
Let $n \in \mathbb{N}$, let $p \in [0,1]$, and let $X \sim \text{Bin}(n,p)$. Then for every positive $a$ with $a \leq \frac{np}{2}$,
\[
\mathbb{P}\left(\left|X -np \right| > a\right) < 2 \exp\left(-\frac{a^2}{4np} \right).
\]
\end{lemma}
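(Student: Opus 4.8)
The plan is to use the standard exponential moment method (the Chernoff--Bernstein trick), bounding the upper and lower tails of $X \sim \text{Bin}(n,p)$ separately and then combining them by a union bound. The starting point is the identity $\mathbb{E}\left[e^{tX}\right] = (1-p+pe^{t})^{n}$, valid for every real $t$, which together with the inequality $1+x \leq e^{x}$ gives $\mathbb{E}\left[e^{tX}\right] \leq \exp\left(np(e^{t}-1)\right)$ for $t \geq 0$ and, applied with $-t$ in place of $t$, also $\mathbb{E}\left[e^{-tX}\right] \leq \exp\left(np(e^{-t}-1)\right)$.

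For the upper tail I would apply Markov's inequality to $e^{tX}$ for a well-chosen $t>0$, obtaining
\[
\mathbb{P}(X \geq np + a) \leq e^{-t(np+a)}\,\mathbb{E}\left[e^{tX}\right] \leq \exp\left(np(e^{t}-1) - t(np+a)\right).
\]
Rather than optimising $t$ exactly it is cleaner to take $t = \frac{a}{2np}$, which by the hypothesis $a \leq \frac{np}{2}$ satisfies $0 < t \leq \frac14$; for such $t$ one has the elementary inequality $e^{t}-1 \leq t + t^{2}$, so the exponent above is at most $npt^{2} - ta = -\frac{a^{2}}{4np}$. For the lower tail I would similarly apply Markov's inequality to $e^{-tX}$, now with $t = \frac{a}{np}$, and use the inequality $e^{-t}-1 \leq -t + \frac{t^{2}}{2}$ (which holds for all $t \geq 0$) to get $\mathbb{P}(X \leq np - a) \leq \exp\left(-\frac{a^{2}}{2np}\right) \leq \exp\left(-\frac{a^{2}}{4np}\right)$.

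Combining the two estimates via $\mathbb{P}(|X-np|>a) \leq \mathbb{P}(X \geq np+a) + \mathbb{P}(X \leq np - a)$ then yields the claimed bound $2\exp\left(-\frac{a^{2}}{4np}\right)$; strictness of the inequality is routine, since the Markov step is already strict whenever $0 < p < 1$, while the cases $p \in \{0,1\}$ are either vacuous (if $p=0$ then $a \leq np/2 = 0$ contradicts $a>0$) or trivial (if $p=1$ then $X$ is constant and the left-hand side is $0$). I do not expect any genuine obstacle here: the only points needing care are the two scalar inequalities $e^{t}-1 \leq t+t^{2}$ for $0 \leq t \leq \tfrac12$ and $e^{-t}-1 \leq -t+\tfrac{t^{2}}{2}$ for $t \geq 0$, each of which follows from a one-line Taylor-remainder or convexity argument, together with the check that the chosen values of $t$ lie in the admissible range, which is precisely what the hypothesis $a \leq np/2$ guarantees. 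Since this is an entirely standard estimate one could of course simply cite \cite[Appendix A]{Alon}; the short self-contained argument above is included for completeness.
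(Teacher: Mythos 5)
Your proof is correct. The paper does not actually prove Lemma \ref{l:chernoff}; it simply cites it as a standard estimate from \cite[Appendix A]{Alon}, so your self-contained exponential-moment argument supplies exactly what the paper leaves to that reference, and it is the same standard Chernoff calculation carried out there. The key steps check out: with $t=\frac{a}{2np}$ the upper-tail exponent is at most $npt^{2}-ta=-\frac{a^{2}}{4np}$, and the lower tail in fact gives the stronger bound $\exp\left(-\frac{a^{2}}{2np}\right)$; note this also yields strictness more directly than your Markov-strictness remark, since for $a>0$ and $np>0$ the sum of the two tail bounds is already strictly below $2\exp\left(-\frac{a^{2}}{4np}\right)$, with the cases $p\in\{0,1\}$ handled exactly as you say.
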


We will also need the following generalised Chernoff-type bound, due to Hoeffding.
	
	\begin{lemma}[\cite{H63}]\label{l:hoeffding}
		Let $K>0$ be a constant and let $X_1,\ldots, X_n$ be independent random variables such that $0\leq X_i \leq K$ for each $1\leq i \leq n$. If $X = \sum_{i=1}^n X_i$ and $t\geq 0$ then
		\[
		\mathbb{P}\left(|X-\mathbb{E}(X)| \geq t\right) \leq 2 \exp\left(-\frac{t^2}{nK^2}\right).
		\]
	\end{lemma}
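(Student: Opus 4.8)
The plan is to prove this via the exponential moment method (Chernoff bounding), which is the classical route to Hoeffding's inequality. Set $Y_i := X_i - \mathbb{E}(X_i)$, so each $Y_i$ is a mean-zero random variable taking values in some interval of length $K$, and let $S := X - \mathbb{E}(X) = \sum_{i=1}^n Y_i$. For any $s > 0$, Markov's inequality applied to $e^{sS}$, together with independence of the $Y_i$, gives
\[
\mathbb{P}(S \geq t) \leq e^{-st}\,\mathbb{E}\!\left(e^{sS}\right) = e^{-st} \prod_{i=1}^n \mathbb{E}\!\left(e^{sY_i}\right),
\]
so everything reduces to bounding the moment generating function of a single bounded, centered random variable.

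The crux of the argument — and the step I expect to be the main obstacle — is Hoeffding's lemma: if $Y$ is mean-zero and supported in $[a,b]$ with $b - a = K$, then $\mathbb{E}(e^{sY}) \leq e^{s^2 K^2/8}$. I would prove this by convexity: since $x \mapsto e^{sx}$ is convex, $e^{sy} \leq \frac{b-y}{b-a}e^{sa} + \frac{y-a}{b-a}e^{sb}$ for all $y \in [a,b]$, and taking expectations (using $\mathbb{E}(Y) = 0$) yields $\mathbb{E}(e^{sY}) \leq \frac{b}{b-a}e^{sa} - \frac{a}{b-a}e^{sb}$. Setting $\theta := -a/(b-a) \in [0,1]$ and $u := sK$, the right-hand side equals $e^{\psi(u)}$ with $\psi(u) := -\theta u + \log\!\left(1 - \theta + \theta e^u\right)$. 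A direct computation shows $\psi(0) = \psi'(0) = 0$ and $\psi''(u) = q(1-q) \leq \tfrac14$, where $q := \theta e^u/(1 - \theta + \theta e^u)$; so Taylor's theorem gives $\psi(u) \leq u^2/8 = s^2 K^2/8$.

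Plugging this back in, $\mathbb{P}(S \geq t) \leq e^{-st + ns^2K^2/8}$ for every $s > 0$, and choosing $s = 4t/(nK^2)$ optimises the exponent to $-2t^2/(nK^2)$, so that $\mathbb{P}(S \geq t) \leq e^{-2t^2/(nK^2)} \leq e^{-t^2/(nK^2)}$. Finally, applying the identical argument to the variables $-Y_i = \mathbb{E}(X_i) - X_i$, which also lie in an interval of length $K$, bounds $\mathbb{P}(S \leq -t)$ by the same quantity, and a union bound over the events $\{S \geq t\}$ and $\{S \leq -t\}$ gives $\mathbb{P}(|S| \geq t) \leq 2e^{-t^2/(nK^2)}$, as claimed. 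Note that there is considerable slack here: the stated constant is weaker than the $-2t^2/(nK^2)$ one actually obtains, so even a cruder bound than $s^2K^2/8$ in Hoeffding's lemma would suffice, and one could alternatively deduce the result directly from Lemma~\ref{l:chernoff} in the special case where the $X_i$ are $\{0,1\}$-valued.
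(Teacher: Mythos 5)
Your proof is correct, and in fact yields the stronger exponent $-2t^2/(nK^2)$, of which the stated bound is a weakening. The paper offers no proof of this lemma at all --- it is quoted as a known result of Hoeffding \cite{H63} --- and your argument (the exponential-moment/Chernoff bound combined with Hoeffding's lemma for a centred variable supported in an interval of length $K$, proved via convexity and the estimate $\psi''(u)=q(1-q)\leq \tfrac14$) is precisely the classical argument from that source, so there is nothing to compare beyond noting that your side remark about deducing it from Lemma \ref{l:chernoff} would only cover the Bernoulli case and only for $t\leq np/2$, which is why the general statement is needed here.
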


In Section \ref{s:weakly} we will use the \emph{configuration model}. Given a degree sequence ${\bf d} \in \mathbb{N}^s$, the configuration model constructs a multigraph $G^*({\bf d})$ as follows: We start with a set of \emph{cells} $\mathcal{W}({\bf d})=\{W_1,\ldots, W_s\}$ where $(|W_1|,|W_2|,\ldots, |W_s|) = {\bf d}$. We call the points in the $W_i$ \emph{half-edges} and we say the \emph{degree} of a cell $W_i$ is $|W_i|$. A \emph{configuration} is a partition $M$ of $W := \bigcup_{i \in [s]} W_i$ into pairs, which we think of as a perfect matching on the set of half-edges. The graph $G^*({\bf d})$ is formed by choosing a configuration $M$ uniformly at random and taking the graph $G(\mathcal{W},M)$ whose vertex set is $[s]$, the number of edges between $i \neq j$ is the number of half-edges in $W_i$ which are matched to a half-edge in $W_j$, and the number of loops at $i$ is half the number of half-edges in $W_i$ which are matched to other half-edges in $W_i$. If ${\bf{d}}=(d,d,\ldots,d) \in \mathbb{N}^m$, then $G^*(\bf{d})$ is a random $d$-regular multigraph on $m$ vertices, which we will denote by $G^*(m,d)$. For more details on the configuration model, see for example \cite{Frieze}.

\section{General bounds: Proofs of Theorems \ref{t:rankwidthdirect} and \ref{t:treewidthdirect}}\label{s:direct}
We start with the proof of Theorem \ref{t:rankwidthdirect}, which is slightly simpler. We note that the proof essentially has two ingredients: The first is using the technique of Luczak and McDiarmid \cite{LuczakMcdiarmid} to bound from below the bisection width of a linear sized subgraph of $G(n,p)$, which already appears in the paper of Spencer and T\'{o}th \cite{SpencerToth}. The second is to use Lemmas \ref{l:smallcut} and \ref{l:rank} to turn this into a bound on the rank-width of $G(n,p)$. However, in order to do so we first need to delete a small number of edges to reduce the maximum degree. This part is similar to the argument of Lee, Lee and Oum \cite{LeeLeeOum}, who instead use the result of Benjamini, Kozma and Wormald \cite{Bejamini2014} to bound from below the bisection width of an appropriate subgraph.

\begin{proof}[Proof of Theorem \ref{t:rankwidthdirect}]
We argue via a sprinkling argument, generating two random graphs $G(n,p_1)$ and $G(n,p_2)$ independently such that $(1-p_1)(1-p_2) = 1-p$ so that their union $G(n,p_1) \cup G(n,p_2)$ has the same distribution as $G(n,p)$. Explicitly we take $p_1 = \frac{1+\frac{\epsilon}{2}}{n}$ and $p_2 = \frac{p-p_1}{1-p_1} \geq \frac{\epsilon}{2n}$. Let us fix $c = \frac{\alpha}{\log \frac{1}{\epsilon}}$for some sufficiently small $\alpha>0$ that we will choose later.

By standard results, see for example \cite[Lemma 5.4]{Janson}, whp the largest component in $G(n,p_1)$ has size at least $\frac{\epsilon}{2} n$ and so whp $G(n,p_1)$ contains a tree $T$ of size $m = \frac{\epsilon}{2} n$. We claim there are at most $\binom{m}{i}2^i$ many partitions $(A,B)$ of $V(T)$ with exactly $i$ \emph{crossing edges}, i.e., edges in $T$ between $A$ and $B$. Indeed, any partition $(A,B)$ of $V(T)$ induces an orientation of the crossing edges in $T$, by orienting them from the vertex in $A$ to the vertex in $B$, and this orientation determines the partition of $V(T)$. Since each of the $\binom{m-1}{i} \leq \binom{m}{i}$ subsets of $E(T)$ of size $i$ admits at most $2^i$ orientations, the bound follows.

We will also use the following bound on the sum of binomial coefficients, which says for all $m$ and $k \leq \frac{m}{2}$,
\begin{equation}\label{e:entropy}
\sum_{i=1}^k \binom{m}{i} \leq 2^{h\left(\frac{k}{m}\right)m}
\end{equation}
where $h(x) = -x \log_2 x - (1-x) \log_2 (1-x)$ is the binary entropy function. We note that 
\begin{equation}\label{e:approx}
h(x) \approx x \log_2 \frac{1}{x}
\end{equation}
for sufficiently small $x$.

It follows that the total number of partitions with at most $c \epsilon^3 n$ crossing edges in $T$ is at most
\begin{align*}
\sum_{i=1}^{c\epsilon^3 n} \binom{m}{i}2^i &\leq 2^{ c\epsilon^3 n}\sum_{i=1}^{c\epsilon^3 n} \binom{m}{i}\\
&\leq 2^{c\epsilon^3 n +h\left(\frac{c\epsilon^3 n}{m}\right) m } \tag{by \eqref{e:entropy}}\\
&\leq 2^{c\epsilon^3 n +h\left(2c\epsilon^2\right) \frac{\epsilon n}{2}} \tag{since $m=\frac{\epsilon n}{2}$}\\
&\leq 2^{c\epsilon^3 n + 3c \epsilon ^3 n \log \left(\frac{1}{\epsilon} \right)} \tag{by \eqref{e:approx}}\\
&\leq 2^{4\alpha \epsilon ^3 n } \tag{since $c = \frac{\alpha}{\log \frac{1}{\epsilon}}$}.
\end{align*}

Now, if we fix a partition $V(T) = V_1 \cup V_2$ with $ |V_1|,|V_2| \geq \frac{m}{3}$, then there are $|V_1|\cdot |V_2| \geq \frac{2m^2}{9} = \frac{\epsilon^2 n^2}{18}$ potential edges between $V_1$ and $V_2$, and we expect at least $\frac{\epsilon^3}{36} n$ of them to appear in $G(n,p_2)$, see Figure \ref{f:crossingedges}. Hence, by Lemma \ref{l:chernoff} the probability that fewer than $c\epsilon^3 n$ of these edges are in $G(n,p_2)$ is at most $ \exp \left( -3 \alpha \epsilon^3n\right)$, for $\alpha$ sufficiently small (independent of $\epsilon$).

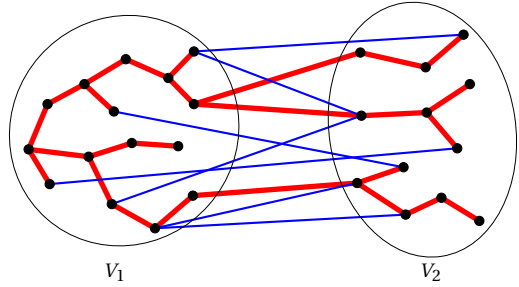
\begin{figure}[!ht]
    \centering
\definecolor{qqqqff}{rgb}{0.,0.,1.}
\definecolor{ffqqqq}{rgb}{1.,0.,0.}
\begin{tikzpicture}[line cap=round,line join=round,>=triangle 45,x=1.0cm,y=1.0cm, scale=0.7]
\clip(1.,-0.5) rectangle (12.,5.);
\draw [line width=2.pt,color=ffqqqq] (2.22,3.)-- (1.86,2.14);
\draw [line width=2.pt,color=ffqqqq] (2.22,3.)-- (2.92,3.38);
\draw [line width=2.pt,color=ffqqqq] (2.92,3.38)-- (3.705533584248081,3.853497516512988);
\draw [line width=2.pt,color=ffqqqq] (2.92,3.38)-- (3.48,2.86);
\draw [line width=2.pt,color=ffqqqq] (3.705533584248081,3.853497516512988)-- (4.512600720534274,3.4968864562935082);
\draw [line width=2.pt,color=ffqqqq] (4.512600720534274,3.4968864562935082)-- (5.,3.);
\draw [line width=2.pt,color=ffqqqq] (4.512600720534274,3.4968864562935082)-- (5.,4.);
\draw [line width=2.pt,color=ffqqqq] (5.,3.)-- (8.16,3.98);
\draw [line width=2.pt,color=ffqqqq] (5.,3.)-- (8.18,2.78);
\draw [line width=2.pt,color=ffqqqq] (8.16,3.98)-- (9.4,3.7);
\draw [line width=2.pt,color=ffqqqq] (9.4,3.7)-- (10.12,4.32);
\draw [line width=2.pt,color=ffqqqq] (8.18,2.78)-- (9.42,2.84);
\draw [line width=2.pt,color=ffqqqq] (9.42,2.84)-- (10.24,3.38);
\draw [line width=2.pt,color=ffqqqq] (9.42,2.84)-- (10.,2.16);
\draw [line width=2.pt,color=ffqqqq] (1.86,2.14)-- (2.26,1.48);
\draw [line width=2.pt,color=ffqqqq] (1.86,2.14)-- (3.,2.);
\draw [line width=2.pt,color=ffqqqq] (3.,2.)-- (3.82,2.26);
\draw [line width=2.pt,color=ffqqqq] (3.,2.)-- (3.44,1.1);
\draw [line width=2.pt,color=ffqqqq] (3.82,2.26)-- (4.7,2.2);
\draw [line width=2.pt,color=ffqqqq] (3.44,1.1)-- (4.26,0.64);
\draw [line width=2.pt,color=ffqqqq] (4.26,0.64)-- (4.98,1.26);
\draw [line width=2.pt,color=ffqqqq] (4.98,1.26)-- (8.1,1.5);
\draw [line width=2.pt,color=ffqqqq] (8.1,1.5)-- (9.02,0.9);
\draw [line width=2.pt,color=ffqqqq] (8.1,1.5)-- (8.98,1.8);
\draw [line width=2.pt,color=ffqqqq] (9.02,0.9)-- (9.7,1.22);
\draw [line width=2.pt,color=ffqqqq] (9.7,1.22)-- (10.42,0.78);
\draw [line width=0.8pt,color=qqqqff] (5.,4.)-- (8.18,2.78);
\draw [line width=0.8pt,color=qqqqff] (3.44,1.1)-- (8.18,2.78);
\draw [line width=0.8pt,color=qqqqff] (3.48,2.86)-- (8.98,1.8);
\draw [line width=0.8pt,color=qqqqff] (4.26,0.64)-- (8.1,1.5);
\draw [line width=0.8pt,color=qqqqff] (4.26,0.64)-- (9.02,0.9);
\draw [line width=0.8pt,color=qqqqff] (5.,4.)-- (10.12,4.32);
\draw [line width=0.8pt,color=qqqqff] (2.26,1.48)-- (10.,2.16);
\draw [rotate around={48.65222278030447:(3.674758663484239,2.4956702534489486)},line width=0.2pt] (3.674758663484239,2.4956702534489486) ellipse (2.228923287521323cm and 2.139491882284884cm);
\draw [rotate around={-81.27830220201824:(9.338551900380512,2.514135205907255)},line width=0.2pt] (9.338551900380512,2.514135205907255) ellipse (2.434464778630063cm and 1.7704223324509234cm);
\begin{scriptsize}
\draw [fill=black] (2.22,3.) circle (2.5pt);
\draw [fill=black] (1.86,2.14) circle (2.5pt);
\draw [fill=black] (2.92,3.38) circle (2.5pt);
\draw [fill=black] (3.705533584248081,3.853497516512988) circle (2.5pt);
\draw [fill=black] (3.48,2.86) circle (2.5pt);
\draw [fill=black] (4.512600720534274,3.4968864562935082) circle (2.5pt);
\draw [fill=black] (5.,3.) circle (2.5pt);
\draw [fill=black] (5.,4.) circle (2.5pt);
\draw [fill=black] (8.16,3.98) circle (2.5pt);
\draw [fill=black] (8.18,2.78) circle (2.5pt);
\draw [fill=black] (9.4,3.7) circle (2.5pt);
\draw [fill=black] (10.12,4.32) circle (2.5pt);
\draw [fill=black] (9.42,2.84) circle (2.5pt);
\draw [fill=black] (10.24,3.38) circle (2.5pt);
\draw [fill=black] (10.,2.16) circle (2.5pt);
\draw [fill=black] (2.26,1.48) circle (2.5pt);
\draw [fill=black] (3.,2.) circle (2.5pt);
\draw [fill=black] (3.82,2.26) circle (2.5pt);
\draw [fill=black] (3.44,1.1) circle (2.5pt);
\draw [fill=black] (4.7,2.2) circle (2.5pt);
\draw [fill=black] (4.26,0.64) circle (2.5pt);
\draw [fill=black] (4.98,1.26) circle (2.5pt);
\draw [fill=black] (8.1,1.5) circle (2.5pt);
\draw [fill=black] (9.02,0.9) circle (2.5pt);
\draw [fill=black] (8.98,1.8) circle (2.5pt);
\draw [fill=black] (9.7,1.22) circle (2.5pt);
\draw [fill=black] (10.42,0.78) circle (2.5pt);
\draw (3.5,-0.2) node {$V_1$};
\draw (9.5,-0.2) node {$V_2$};
\end{scriptsize}
\end{tikzpicture}
    \caption{Whp for every partition $(V_1,V_2)$ of the vertex set of the tree $T \subseteq G(n,p_1)$, which we draw in red, with few crossing edges in $T$, there are many crossing edges in $G(n,p_2)$, drawn in blue.}
    \label{f:crossingedges}
\end{figure}

Hence, by the union bound, the probability that there exists some partition of $V(T)$ with fewer than $c\epsilon^3 n$ crossing edges in $T \cup G(n,p_2) \subseteq G(n,p)$ is at most $\exp \left( - \alpha \epsilon^3n\right) = o(1)$. In particular, it follows that 
\begin{equation}\label{e:bisectionwidth}
\text{whp the bisection width of $H = G(n,p)[V(T)]$ satisfies $b(H) \geq c\epsilon^3 n$.}
\end{equation}

We would like to have a bound on the maximum degree of $G(n,p)$ in order to apply Lemma \ref{l:rank}, but the naive bound of $O(\log n)$ will not be sufficient. However, we note that by Lemma \ref{l:highdegree} there are very few edges in $G(n,p)$ incident with vertices of high degree. Indeed, the number of edges incident with the set $X'$ of the $\frac{\delta}{\log \frac{1}{\delta}}n$ vertices of highest degree in $G(n,p)$, where $\delta$ is sufficiently small, is whp at most $\delta n$ by Lemma \ref{l:highdegree}. In particular, at least one vertex in $X$ must have degree at most $2 \log \frac{1}{\delta}$, since otherwise there would be too many edges incident with $X'$. So, taking $\delta = \frac{c \epsilon^3}{2}$, we see that if we let $X \subseteq X'$ be the set of vertices of degree at least
\[
M = 2 \log \frac{2}{c\epsilon^3} \leq 10 \log \frac{1}{\epsilon},
\]
then whp the number of edges incident with $X$ in $G(n,p)$ is at most $\frac{c\epsilon^3}{2} n$.
%We claim that for $\delta$ sufficiently small whp every set of $\frac{\delta}{\log \frac{1}{\delta}}n$ vertices in $G(n,p)$ touches fewer than $\delta n$ edges. This follows from a first moment calculation, the probability of this not happening is at most
%\[
%\binom{n}{\frac{\delta}{\log \frac{1}{\delta}} n} \binom{\frac{\delta}{\log \frac{1}{\delta}} n}{\delta n} \left(\frac{1+\epsilon}{n}\right)^{\delta n} \leq \left( \left( \frac{e \log \frac{1}{\delta}}{\delta} \right)^{\frac{1}{\log \frac{1}{\delta}}} \frac{2e}{n\log \frac{1}{\delta}} \right)^{\delta n}
%]
%which will be small for $\delta = o(1)$ and $\delta n \to \infty$.
%In which case we can delete the $ \frac{\delta}{\log \frac{1}{\delta}}n$ vertices of highest degree in the graph. It follows that we delete at most $\delta n$ edges, but also that the maximum degree in what remains is at most $2 \log \frac{1}{\delta}$ (otherwise we would have deleted too many edges).
%It follows that the number of edges incident to vertices of degree at least $M=2 \log \frac{1}{\delta}$ is at most $\delta n$.
%Since we want to apply this with $\delta = \frac{ c \epsilon^3}{2}$, note that in this case $\delta n \to \infty$ by assumption, we will want $M$ to be at least $2 \log \frac{2}{c \epsilon^3}$, so it would suffice to take $M = 10 \log \frac{1}{\epsilon}$ or something similar.
Then, given any partition  $(W_1,W_2)$ of $W = V(H)$ such that $|W_1|,|W_2| \geq \frac{m}{3}$ we have 
\[
e(W_1,W_2) \geq b(H) \geq c\epsilon^3 n.
\]
Furthermore, the number of edges incident with $X$ is at most $\frac{c\epsilon^3}{2} n$, and so if we let $W'_i = W_i \setminus X$ for $i=1,2$, then we have $e(W_1',W_2')\geq \frac{c\epsilon^3}{2} n$.

Then, the adjacency matrix $N_{W_1',W_2'}$ between $W_1'$ and $W_2'$ has at least $\frac{c\epsilon^3}{2} n$ non-zero entries and at most $M$ non-zero entries per row, and hence by Lemma \ref{l:rank} it follows that
\[
\rho_H(W_1,W_2) \geq \rho_H(W_1',W_2') \geq \frac{c\epsilon^3}{2M^2} n.
\]

It follows from Lemma \ref{l:smallcut} that $\textrm{rw}(H)\geq \frac{c\epsilon^3}{2M^2} n$ and hence, since $H$ is an induced subgraph of $G(n,p)$, by \eqref{e:vtxminor},
\[
\textrm{rw}(G(n,p))\geq  \textrm{rw}(H)\geq  \frac{c\epsilon^3}{2M^2} n = \Omega\left(\frac{\epsilon^3}{ \left(\log \frac{1}{\epsilon}\right)^3} n \right).
\]
\end{proof}

In order to argue about the tree-width of $G(n,p)$, it will not be sufficient to just find a large tree in $G(n,p)$ in the first sprinkling step. In order to bound the number of balanced separators efficiently, we will need to control the maximum degree of this tree.

For this reason, we will need the following result, asserting the existence of a large bounded degree tree in a supercritical random graph. The proof is relatively standard, and so we relegate it to Appendix \ref{a:largetree}

\begin{theorem}\label{t:largetree}
Let $\delta >0$ be sufficiently small, let $p = \frac{1 + \delta}{n}$. Then whp there exists a tree $T$ in $G(n,p)$ such that $|T| = \Omega(\delta n)$ and $\Delta(T) = O\left(\log \frac{1}{\delta}\right)$.
\end{theorem}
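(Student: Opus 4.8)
The plan is to build the tree $T$ by a \emph{truncated} breadth-first search exploration of $G(n,p)$. Set $K := \lceil 2\log_2 \tfrac1\delta \rceil = O(\log\tfrac1\delta)$ and $\tau := \tfrac{\delta}{100}n$. We run the standard BFS exploration from a vertex, partitioning the vertices at each stage into \emph{explored}, \emph{active}, and \emph{neutral}; however, when we process an active vertex $w$ and reveal its set $N_w$ of neutral neighbours, we move only an arbitrary $\min\{|N_w|,K\}$ of them into the active set, recording these as the children of $w$ in the exploration tree and leaving the other neutral neighbours of $w$ neutral. The exploration halts when no active vertices remain, or as soon as $\tau$ vertices have been processed. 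In the latter case we have found a tree on at least $\tau = \Omega(\delta n)$ vertices in which every vertex has at most $K$ children and one parent, hence of maximum degree $O(\log\tfrac1\delta)$; we call this outcome a \emph{success}, and it is exactly what the theorem asks for.

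Let $S_i$ denote the number of active vertices after $i$ vertices have been processed, so $S_0 = 1$ and $S_i = S_{i-1} - 1 + Z_i$ with $Z_i := \min\{|N_{w_i}|,K\}$. As long as $i \le \tau$, at least $(1-\tfrac{\delta}{100})n$ vertices are still neutral, so $|N_{w_i}|$ stochastically dominates $\mathrm{Bin}\big((1-\tfrac{\delta}{100})n,p\big)$; since $2enp \le 2e(1+\delta) < K$ when $\delta$ is small, monotonicity of $x\mapsto\min\{x,K\}$ together with Lemma~\ref{l:restricted} gives $\mathbb{E}(Z_i) \ge (1-\tfrac{\delta}{100})np - K2^{-K} \ge 1 + \tfrac{\delta}{2}$, using $K2^{-K} = 2\delta^2\log_2\tfrac1\delta \le \tfrac{\delta}{4}$. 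Thus, up to step $\tau$, the process $(S_i)$ stochastically dominates a random walk started at $1$ with i.i.d.\ increments of mean at least $\tfrac{\delta}{2}$ that lie in $[-1,K-1]$. Two standard consequences follow. First, such a walk stays positive forever with probability $\Omega(\delta)$, so a single exploration is a success with probability at least $c\delta$ for an absolute constant $c>0$. Second, applying the Hoeffding bound (Lemma~\ref{l:hoeffding}) to the event that the exploration dies exactly at step $i$, the probability that it dies at some step $i$ with $B\log n \le i \le \tau$ is at most $\sum_{i \ge B\log n} 2\exp\!\big(-\tfrac{i\delta^2}{4K^2}\big) \le n^{-2}$, provided the constant $B = B(\delta)$ is large enough. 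Hence whp a given exploration is either a success or a \emph{short failure}, meaning it dies having processed fewer than $B\log n$ vertices.

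We now iterate: run an exploration from an arbitrary vertex; if it is a short failure, delete its (at most $B\log n$) vertices and restart from a vertex of the remaining graph. The key point making this work is that a BFS from $v$ only examines edges incident to the tree it produces, so the graph induced on the not-yet-touched vertices is still distributed as a fresh binomial random graph; consequently the successive explorations are mutually independent given their sizes. Condition on the whp event that none of the first $\log^2 n$ explorations is a long failure. Then every exploration we reach is either a success or a short failure, the short failures together consume at most $\log^2 n \cdot B\log n = o(n)$ vertices, hence fewer than $\tfrac{\delta}{100}n$ in total, and so the drift estimate of the previous paragraph remains valid for all of them; thus each exploration we reach is a success with probability at least $c\delta$. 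The probability that all of the first $\log^2 n$ explorations are short failures is therefore at most $(1-c\delta)^{\log^2 n} = o(1)$, so whp some exploration succeeds and produces the required tree. Essentially the same argument works, after adjusting the number of restarts and the constants, throughout the weakly supercritical regime $\delta=o(1)$, $\delta^3 n \to \infty$.

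The step that needs the most care is the dichotomy in the second paragraph — ruling out ``medium-sized'' explorations that die only after $\Omega(\log n)$ but $O(\delta n)$ steps. This is the analogue for our truncated exploration of the classical fact that component sizes in a supercritical $G(n,p)$ avoid the range $\big(\Omega(\log n),\, o(n)\big)$, and it is precisely what lets us afford polynomially many restarts while wasting only $o(n)$ vertices on failures; happily, it falls straight out of the positive drift of $(S_i)$ together with Lemma~\ref{l:hoeffding}.
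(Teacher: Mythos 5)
Your overall strategy is sound and is close in spirit to the paper's: both cap the number of children per vertex at $K=\Theta\left(\log\frac1\delta\right)$, use Lemma \ref{l:restricted} to get expected offspring at least $1+\frac\delta2$, and use Lemma \ref{l:hoeffding} for concentration. The global structure differs: the paper first grows a seed tree of order $\Theta(\log\log\log n)$ by repeated greedy attempts and then expands it generation by generation, so that the Hoeffding failure probabilities summed over generations are $o(1)$ and no restarting is needed; you instead run a single truncated BFS, invoke an $\Omega(\delta)$ survival probability for the positively drifting walk, rule out ``medium'' deaths, and boost to whp by $\log^2 n$ independent restarts. That is a legitimate alternative (for constant $\delta$ you only need the survival probability to be a positive constant, so even the $\Omega(\delta)$ claim, which you assert without proof, is more than you need), and your restart/independence bookkeeping (only edges incident to processed vertices are ever queried; a dead exploration has no active vertices left, hence at most $B\log n$ vertices) is correct.

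There is, however, one genuine slip in the drift estimate: the claim that ``as long as $i\le\tau$, at least $\left(1-\frac{\delta}{100}\right)n$ vertices are still neutral.'' Under your stopping rule (halt after $\tau=\frac{\delta}{100}n$ vertices have been \emph{processed}), the non-neutral set also contains all active vertices, i.e.\ up to $K$ chosen children per processed vertex, so it can reach order $K\tau=\Theta\left(\delta\log\frac1\delta\right)n$. Since the drift computation needs the neutral fraction to be at least roughly $1-\frac\delta2$ (you must beat the loss $\delta/4$ plus the truncation error), the bound $\mathbb{E}(Z_i)\ge 1+\frac\delta2$ fails once $\log\frac1\delta$ is large compared with an absolute constant, i.e.\ exactly in the regime ``$\delta$ sufficiently small'' with $\Omega(\cdot)$, $O(\cdot)$ uniform in $\delta$ that the theorem requires (and that the paper uses, with $\delta=\epsilon/2$). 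The fix is immediate: stop the exploration as soon as the tree (processed plus active vertices) reaches $\tau$ vertices, which happens no later than when $\tau$ vertices have been processed and still yields a tree of size $\Omega(\delta n)$ and maximum degree at most $K+1$; then the number of non-neutral vertices never exceeds $\tau+K$, the neutral fraction stays above $1-\frac{\delta}{50}$, and all of your probability estimates go through unchanged. (This is precisely the bookkeeping the paper does by tracking the full touched set $X$ and working under the assumption $|X|\le\frac{\delta n}{4}$.) With that correction, and a one-line justification of the $\Omega(\delta)$ (or merely positive) survival probability of the dominated walk, your proof is complete for constant $\delta$.
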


\begin{proof}[Proof of Theorem \ref{t:treewidthdirect}]
We argue via a sprinkling argument with $p_1 = \frac{1+ \frac{\epsilon}{2}}{n}$ and $p_2 = \frac{p-p_1}{1-p_1} \geq \frac{\epsilon}{2n}$. By Theorem \ref{t:largetree} there exist constants $c_1,c_2 >0$ such that whp there exists a tree $T$ in $G(n,p_1)$ with vertex set $V(T)$ such that $|T| = c_1\epsilon n$ and $\Delta(T) \leq c_2 \log \frac{1}{\epsilon}$. Let us also set $c := \frac{c_3}{\log \frac{1}\epsilon}$, where we will choose $c_3>0$ sufficiently small later

We now sprinkle onto the edges of $V(T)$ with probability $p_2$, and claim that whp there are no $\left( c\epsilon^3 n, \frac{1}{2}\right)$-balanced partitions of $G(n,p)[V(T)]$. Given a set $S$ of $c\epsilon^3 n$ vertices in $T$, there are at most $\Delta(T)c\epsilon^3n$ components of $T \setminus S$ and so at most $2^{ \Delta(T)c\epsilon^3 n}$ partitions $(A,S,B)$ of $V(T)$ giving rise to a separation of $T$ with separator $S$, even without considering which are balanced.

For each $\left(c\epsilon^3 n, \frac{1}{2}\right)$-balanced partition $(A,S,B)$ of $T$, since 
\[
\frac{n}{3}\left( c_1 \epsilon- c\epsilon^3 \right) \leq |A|,|B| \leq \frac{2n}{3}\left(c_1 \epsilon - c\epsilon^3 \right) \qquad \text{ and } \qquad |A|+|B| = n(c_1\epsilon - c\epsilon^3),
\]
there are $|A|\cdot |B| \geq\frac{2n^2}{9} \left(c_1 \epsilon - c\epsilon^3\right)^2$ edges between $A$ and $B$, and so the probability that none of these edges are present after sprinkling is at most
\[
(1-p_2)^{\frac{2n^2}{9} \left(c_1 \epsilon - c\epsilon^3\right)^2} \leq \exp \left( -\frac{\epsilon n}{9}\left(c_1 \epsilon - c\epsilon^3\right)^2\right) \leq  \exp \left( -\frac{c_1^2 \epsilon^3 n}{36}\right) 
\]
since $c\epsilon^3 \leq \frac{c_1 \epsilon}{2}$.

In particular, the expected number of $\left(c\epsilon^3 n, \frac{1}{2}\right)$-balanced partitions of $G(n,p)[V(T)]$ will be at most
\begin{align*}
\binom{n}{c\epsilon^3 n} 2^{ \Delta(T)c\epsilon^3n}  \exp \left( -\frac{c_1^2 \epsilon^3 n}{36}\right)  &\leq \left( \frac{e}{c\epsilon^3}\right)^{c\epsilon^3n} \exp \left( \Delta(T) c\epsilon^3n - \frac{c_1^2 \epsilon^3 n}{36}\right)\\
&\leq 
\exp \left( n\left( c\epsilon^3\left(\log \frac{1}{c\epsilon^3} + 1\right) + c_2 \log \frac{1}{\epsilon} c\epsilon^3 - \frac{c_1^2 \epsilon^3}{36}\right) \right)\\
&= \exp \left( - \Omega\left(\epsilon^3 n\right)\right),
\end{align*}
as long as $c_3$ is sufficiently small.

Since whp there are no $\left(c\epsilon^3n, \frac{1}{2}\right)$-balanced partitions of $G(n,p)[V(T)]$, it follows from Lemma \ref{l:twsep} that whp \[
\textrm{tw}(G(n,p)) \geq \textrm{tw}(G(n,p)[V(T)]) \geq c\epsilon^3n = \Omega\left(\frac{ \epsilon^3 n}{\log \frac{1}{\epsilon}} \right).
\]
\end{proof}

\section{The weakly supercritical regime: Proof of Theorem \ref{t:weaklysup}}\label{s:weakly}

In the weakly subcritical regime, we can in fact show a better bound on the rank- and tree-width of $G(n,p)$ by considering more carefully the expansion properties of the giant component $L_1$. We note however, that a naive analogue of Theorem \ref{t:expansion} will not hold in this regime of $p$, since the giant component is likely too sparse to contain a large expanding \emph{subgraph}. However, it will still whp contain a large expanding substructure whose existence will bound the rank-width of $G(n,p)$ from below. We say that a graph $H$ is an \emph{induced topological minor} of $G$ if there is an induced subgraph $K$ of $G$ which is a subdivision of $H$.
\begin{theorem}\label{t:weakexpander}
Let $\epsilon = \epsilon(n)>0$ be such that $\epsilon^3n \rightarrow \infty$ and $\epsilon = o(1)$, and let $p=\frac{1+\epsilon}{n}$. Then there exist constants $\alpha,\delta >0$ such that whp $G(n,p)$ contains some graph $H$ as an induced topological minor such that
\begin{enumerate}
    \item $|V(H)| \geq \delta \epsilon^3 n$;
    \item $H$ is a $3$-regular $\alpha$-expander.
\end{enumerate}
\end{theorem}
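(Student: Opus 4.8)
The plan is to locate, inside the kernel $K$ of the giant component of $G(n,p)$, a large induced subgraph that is a subdivision of a random cubic multigraph, and then invoke the classical fact that random cubic multigraphs are whp good expanders.

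\emph{Structure of the kernel.} In this regime it is standard (see, e.g., \cite{Janson,Frieze,DLP14}) that whp $G(n,p)$ has a unique giant component $L_1$, with every other component a tree or unicyclic; that the $2$-core $C$ of $L_1$ is an induced subgraph $G(n,p)[V(C)]$ with excess $(1+o(1))\tfrac23\epsilon^3n$; and that the kernel $K$, obtained from $C$ by suppressing degree-$2$ vertices, is a multigraph of minimum degree at least $3$ on $m:=|V(K)|=(1+o(1))\tfrac43\epsilon^3n$ vertices which, conditioned on $m$ and on its degree sequence $\mathbf d$, is distributed as $G^*(\mathbf d)$. Since $\sum_i d_i=2e(K)=2\,\mathrm{excess}(C)+2m-2=(1+o(1))\,3m$ and each $d_i\ge 3$, whp $\sum_i(d_i-3)=o(m)$. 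Consequently the set $B$ of kernel vertices of degree at least $4$ satisfies $|B|\le\sum_i(d_i-3)=o(m)$, and the number of edges of $K$ incident with $B$ is at most $\sum_{v\in B}d_v\le\sum_i(d_i-3)+3|B|=o(m)$.

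\emph{Pruning to a cubic structure.} Let $K^-:=K-B$, so that $K^-$ has maximum degree at most $3$, and all but $o(m)$ of its vertices have degree exactly $3$ in $K^-$; the exceptions are the endpoints of edges incident with $B$. Let $C_0$ be the $2$-core of $K^-$. Forming a $2$-core only deletes vertices of current degree at most $1$, and since such a deletion lowers the degree of at most one surviving vertex, a routine accounting shows the cascade removes only $O(1)$ times the number of vertices of degree at most $2$ in $K^-$, hence $o(m)$ in total; thus $|V(C_0)|\ge m-o(m)$, and by the same count $C_0$ has only $o(m)$ vertices of degree $2$, so its degree-$3$ vertices number $m_0=m-o(m)=\Theta(\epsilon^3n)$. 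Put $H:=$ the kernel of $C_0$. Since $C_0$ has minimum degree at least $2$ and maximum degree at most $3$, the multigraph $H$ is exactly $3$-regular on $m_0$ vertices, which gives the first conclusion with $\delta=1$. That $H$ is an induced topological minor of $G(n,p)$ follows by realising $C_0$ back inside $C$: let $W'\subseteq V(C)$ consist of the kernel vertices lying in $V(C_0)$ together with the interiors of those $K$-paths that join two of them. A degree-$2$ vertex of $C$ on such a path has both of its $C$-edges on that path, and a kernel vertex of $C_0$ has (since it avoids $B$) degree exactly $3$ in $C$, with each incident $C$-edge either on a retained path or running out of $W'$. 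Hence $G(n,p)[W']=C[W']$ has no chords, so it is precisely a subdivision of $C_0$ and therefore of $H$; being induced in $C$, it is induced in $G(n,p)$.

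\emph{Distribution and expansion.} Conditioning successively on $\mathbf d$, on the matching of the half-edges at $B$, and then running the $2$-core deletion and the degree-$2$ suppression, the standard facts that the $2$-core of a configuration model is a configuration model on its surviving cells, and that suppressing degree-$2$ cells of a configuration model again yields a configuration model, show that, conditioned on $m_0$, the multigraph $H$ is distributed as $G^*(m_0,3)$. It is classical that $G^*(m_0,3)$ is whp an $\alpha$-edge-expander for a universal constant $\alpha>0$ — for instance via a first-moment estimate in the configuration model on the number of partitions $(S,S^c)$ with $|S|\le m_0/2$ and $e(S,S^c)<3\alpha|S|$ — which yields the second conclusion. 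The step I expect to require the most care is this last one: rigorously transporting the configuration-model description of $K$ through the deletion of $B$, the $2$-core reduction and the degree-$2$ suppression while controlling the conditioning. On the combinatorial side, the one point not to be glossed over is that it is essential to use the $2$-core of $K^-$ rather than its $3$-core: the $3$-core cascade in an almost-cubic graph is supercritical and can wipe out all but a vanishing fraction of the vertices, which is exactly why we must settle for an induced topological minor and not an induced subgraph.
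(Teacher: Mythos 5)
Your overall strategy is the same as the one used in Section \ref{s:weakly}: use {\L}uczak-type degree estimates for the $2$-core/kernel, prune the few high-degree vertices, peel and suppress degree-$2$ vertices to exhibit a random cubic multigraph as an induced topological minor, and finish with the classical expansion of $G^*(m,3)$ (Theorem \ref{t:regularexpander}). Your combinatorial part is essentially sound: the deficiency count showing the $2$-core cascade after deleting $B$ removes only $o(m)$ vertices is correct, and the chordlessness check for $C[W']$ works because both the $2$-core and the $2$-core of $K^-$ are induced in their host graphs (one small repair: $C_0$ may contain isolated cycle components, which are not part of any subdivision of $H$, so you must drop the corresponding components of $C[W']$ before claiming an induced subdivision of $H$; this is harmless since they contain only degree-$2$ vertices).

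The genuine gap is the distributional claim you label as standard: that the kernel $K$, conditioned on its degree sequence $\mathbf d$, is distributed as $G^*(\mathbf d)$. This is not an exact identity. What is exactly true is that the $2$-core, conditioned on \emph{its} degree sequence, is uniform over \emph{simple} graphs with that degree sequence; the induced law of the kernel is then the law of the kernel of the configuration model on the $2$-core degrees conditioned on the simplicity of the $2$-core, and this conditioning tilts the kernel's law away from $G^*(\mathbf d)$: kernels in which a loop is subdivided fewer than twice, or in which two parallel edges are both unsubdivided, are forbidden, and subdivided loops and parallel edges carry symmetry factors. (As a toy illustration, a $2$-core with degree sequence $(3,3,2,2)$ is forced to be $K_4$ minus an edge, whose kernel is deterministically a triple edge, whereas $G^*((3,3))$ is a triple edge only with probability $2/5$.) Making your claim precise ``up to contiguity'' at the kernel level is essentially the Ding--Kim--Lubetzky--Peres description, which this argument is meant to avoid. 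The fix is to route the argument through the $2$-core instead: condition on the $2$-core degree sequence, note that by Lemma \ref{l:luczak} the parameter $\lambda$ is $O(1)$ so the uniform simple graph is contiguous to $G^*(\mathbf d)$, and then perform your entire surgery (expose the matching at cells of degree at least four, peel leaf cells, contract degree-$2$ cells) inside the configuration model by deferred decisions, which is exactly Lemma \ref{l:indtopmin}; the whp expansion statement then transfers back to the $2$-core and hence to $G(n,p)$. As written, your proof never invokes the $\lambda=O(1)$ contiguity step, and the kernel-law assertion it rests on does not hold.
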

We note that it is likely that this could also be derived from work of Ding, Kim, Lubetzky and Peres \cite{Ding}, who give a remarkable description of a simple model contiguous to the giant component in the weakly supercritical regime. However, our exposition is relatively straightforward and short, and so we opt for a direct argument.

To prove Theorem \ref{t:weakexpander} we will use some structural properties of the \emph{$2$-core} of $L_1$.
The $2$-core of a graph is the unique maximal subgraph of minimum degree at least two. The \emph{kernel} of a graph is the graph obtained from the $2$-core by deleting all isolated cycles and contracting all \emph{bare paths}, paths in which all internal vertices have degree two, see Figure \ref{f:corekernel}. 

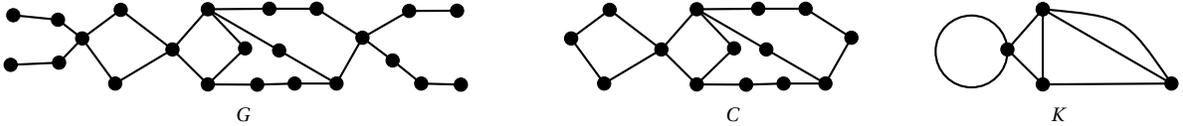
\begin{figure}[!ht]
    \centering
\definecolor{ududff}{rgb}{0.30196078431372547,0.30196078431372547,1.}
\begin{tikzpicture}[line cap=round,line join=round,>=triangle 45,x=1.0cm,y=1.0cm]
\clip(0.,1.3) rectangle (16.,3.2);
\draw [line width=0.8pt] (1.8420245093703773,2.991959873721575)-- (1.3298045767661724,2.611453638072737);
\draw [line width=0.8pt] (1.3298045767661724,2.611453638072737)-- (1.7688502332840623,2.0114245741649532);
\draw [line width=0.8pt] (1.7688502332840623,2.0114245741649532)-- (2.5298627045817383,2.4651050859001065);
\draw [line width=0.8pt] (1.8420245093703773,2.991959873721575)-- (2.5298627045817383,2.4651050859001065);
\draw [line width=0.8pt] (2.5298627045817383,2.4651050859001065)-- (3.,3.);
\draw [line width=0.8pt] (3.,3.)-- (3.495763148921096,2.4797399411173697);
\draw [line width=0.8pt] (3.,2.)-- (3.495763148921096,2.4797399411173697);
\draw [line width=0.8pt] (3.,2.)-- (2.5298627045817383,2.4651050859001065);
\draw [line width=0.8pt] (3.,2.)-- (3.6567465563109893,1.9967897189476902);
\draw [line width=0.8pt] (3.6567465563109893,1.9967897189476902)-- (4.1543316336979315,2.0114245741649532);
\draw [line width=0.8pt] (4.1543316336979315,2.0114245741649532)-- (4.710456131953925,2.0114245741649532);
\draw [line width=0.8pt] (3.,3.)-- (4.710456131953925,2.0114245741649532);
\draw [line width=0.8pt] (3.,3.)-- (3.817729963700882,3.006594728938837);
\draw [line width=0.8pt] (3.817729963700882,3.006594728938837)-- (4.447028738043191,3.006594728938837);
\draw [line width=0.8pt] (4.447028738043191,3.006594728938837)-- (5.055702576893261,2.620915242143432);
\draw [line width=0.8pt] (5.055702576893261,2.620915242143432)-- (4.710456131953925,2.0114245741649532);
\draw [line width=0.8pt] (5.055702576893261,2.620915242143432)-- (5.676356576293283,2.977325018504311);
\draw [line width=0.8pt] (5.676356576293283,2.977325018504311)-- (6.313619434637541,2.9803200586417984);
\draw [line width=0.8pt] (5.055702576893261,2.620915242143432)-- (5.456833748034338,2.3187565337274756);
\draw [line width=0.8pt] (5.456833748034338,2.3187565337274756)-- (5.837339983683176,2.0114245741649524);
\draw [line width=0.8pt] (5.837339983683176,2.0114245741649524)-- (6.364194771504644,1.9967897189476893);
\draw [line width=0.8pt] (1.3298045767661724,2.611453638072737)-- (1.0078377619863865,2.860246176766207);
\draw [line width=0.8pt] (1.3298045767661724,2.611453638072737)-- (1.0224726172036496,2.2894868232929495);
\draw [line width=0.8pt] (1.0078377619863865,2.860246176766207)-- (0.41339036378937305,2.9204192558920705);
\draw [line width=0.8pt] (1.0224726172036496,2.2894868232929495)-- (0.37853898764407756,2.2602171128584234);
\draw [line width=0.8pt] (8.342024509370365,2.991959873721575)-- (7.829804576766161,2.611453638072737);
\draw [line width=0.8pt] (7.829804576766161,2.611453638072737)-- (8.26885023328405,2.0114245741649532);
\draw [line width=0.8pt] (8.26885023328405,2.0114245741649532)-- (9.029862704581722,2.4651050859001065);
\draw [line width=0.8pt] (8.342024509370365,2.991959873721575)-- (9.029862704581722,2.4651050859001065);
\draw [line width=0.8pt] (9.029862704581722,2.4651050859001065)-- (9.5,3.);
\draw [line width=0.8pt] (9.5,3.)-- (9.995763148921077,2.4797399411173697);
\draw [line width=0.8pt] (9.5,2.)-- (9.995763148921077,2.4797399411173697);
\draw [line width=0.8pt] (9.5,2.)-- (9.029862704581722,2.4651050859001065);
\draw [line width=0.8pt] (9.5,2.)-- (10.156746556310969,1.9967897189476902);
\draw [line width=0.8pt] (10.156746556310969,1.9967897189476902)-- (10.65433163369791,2.0114245741649532);
\draw [line width=0.8pt] (10.65433163369791,2.0114245741649532)-- (11.210456131953903,2.0114245741649532);
\draw [line width=0.8pt] (9.5,3.)-- (11.210456131953903,2.0114245741649532);
\draw [line width=0.8pt] (9.5,3.)-- (10.31772996370086,3.006594728938837);
\draw [line width=0.8pt] (10.31772996370086,3.006594728938837)-- (10.94702873804317,3.006594728938837);
\draw [line width=0.8pt] (10.94702873804317,3.006594728938837)-- (11.555702576893239,2.620915242143432);
\draw [line width=0.8pt] (11.555702576893239,2.620915242143432)-- (11.210456131953903,2.0114245741649532);
\draw [line width=0.8pt] (13.629862704581706,2.4651050859001065)-- (14.1,3.);
\draw [line width=0.8pt] (14.1,2.)-- (13.629862704581706,2.4651050859001065);
\draw [line width=0.8pt] (14.1,3.)-- (15.810456131953886,2.0114245741649532);
\draw [line width=0.8pt] (13.152758762539579,2.4384267500454246) circle (0.47784924936229933cm);
\draw [line width=0.8pt] (14.1,2.)-- (15.810456131953886,2.0114245741649532);
\draw [line width=0.8pt] (14.1,3.)-- (14.1,2.);
\draw [line width=0.8pt]  (14.1,3.) .. controls (15.2,2.9) .. (15.810456131953886,2.0114245741649532);
\begin{scriptsize}
\draw [fill=black] (1.8420245093703773,2.991959873721575) circle (2.5pt);
\draw [fill=black] (1.3298045767661724,2.611453638072737) circle (2.5pt);
\draw [fill=black] (1.7688502332840623,2.0114245741649532) circle (2.5pt);
\draw [fill=black] (2.5298627045817383,2.4651050859001065) circle (2.5pt);
\draw [fill=black] (3.,3.) circle (2.5pt);
\draw [fill=black] (3.495763148921096,2.4797399411173697) circle (2.5pt);
\draw [fill=black] (3.,2.) circle (2.5pt);
\draw [fill=black] (3.6567465563109893,1.9967897189476902) circle (2.5pt);
\draw [fill=black] (4.1543316336979315,2.0114245741649532) circle (2.5pt);
\draw [fill=black] (4.710456131953925,2.0114245741649532) circle (2.5pt);
\draw [fill=black] (3.9480750760336605,2.4520504183039655) circle (2.5pt);
\draw [fill=black] (3.817729963700882,3.006594728938837) circle (2.5pt);
\draw [fill=black] (4.447028738043191,3.006594728938837) circle (2.5pt);
\draw [fill=black] (5.055702576893261,2.620915242143432) circle (2.5pt);
\draw [fill=black] (5.676356576293283,2.977325018504311) circle (2.5pt);
\draw [fill=black] (6.313619434637541,2.9803200586417984) circle (2.5pt);
\draw [fill=black] (5.456833748034338,2.3187565337274756) circle (2.5pt);
\draw [fill=black] (5.837339983683176,2.0114245741649524) circle (2.5pt);
\draw [fill=black] (6.364194771504644,1.9967897189476893) circle (2.5pt);
\draw [fill=black] (1.0078377619863865,2.860246176766207) circle (2.5pt);
\draw [fill=black] (1.0224726172036496,2.2894868232929495) circle (2.5pt);
\draw [fill=black] (0.41339036378937305,2.9204192558920705) circle (2.5pt);
\draw [fill=black] (0.37853898764407756,2.2602171128584234) circle (2.5pt);
\draw [fill=black] (8.342024509370365,2.991959873721575) circle (2.5pt);
\draw [fill=black] (7.829804576766161,2.611453638072737) circle (2.5pt);
\draw [fill=black] (8.26885023328405,2.0114245741649532) circle (2.5pt);
\draw [fill=black] (9.029862704581722,2.4651050859001065) circle (2.5pt);
\draw [fill=black] (9.5,3.) circle (2.5pt);
\draw [fill=black] (9.995763148921077,2.4797399411173697) circle (2.5pt);
\draw [fill=black] (9.5,2.) circle (2.5pt);
\draw [fill=black] (10.156746556310969,1.9967897189476902) circle (2.5pt);
\draw [fill=black] (10.65433163369791,2.0114245741649532) circle (2.5pt);
\draw [fill=black] (11.210456131953903,2.0114245741649532) circle (2.5pt);
\draw [fill=black] (10.424564281702578,2.46563871974202) circle (2.5pt);
\draw [fill=black] (10.31772996370086,3.006594728938837) circle (2.5pt);
\draw [fill=black] (10.94702873804317,3.006594728938837) circle (2.5pt);
\draw [fill=black] (11.555702576893239,2.620915242143432) circle (2.5pt);
\draw [fill=black] (13.629862704581706,2.4651050859001065) circle (2.5pt);
\draw [fill=ududff] (14.1,3.) circle (2.5pt);
\draw [fill=black] (14.1,2.) circle (2.5pt);
\draw [fill=black] (15.810456131953886,2.0114245741649532) circle (2.5pt);
\draw [fill=black] (14.1,3.) circle (2.5pt);
\draw (3.47,1.6) node {$G$};
\draw (9.98,1.6) node {$C$};
\draw (14.31,1.6) node {$K$};
\end{scriptsize}
\end{tikzpicture}
    \caption{A graph $G$, its 2-core $C$, and its kernel $K$.}
    \label{f:corekernel}
\end{figure}

It is easy to see that if we condition on the degree sequence of the $2$-core $C$ of $L_1$, then $C$ is uniformly distributed over all simple graphs with this degree sequence. However, the degree distribution of the $2$-core is well understood. The following bounds follow from work of {\L}uczak \cite{Luczakcycle}.

\begin{lemma}\label{l:luczak}
Let $p=\frac{1+\epsilon}{n}$ be such that $\epsilon=o(1)$ and $\epsilon^3 n \to \infty$, and for $i\geq 2$, let $D_i$ be the random variable which counts the number of vertices of degree $i$ in the $2$-core of the largest component $L_1$ of $G(n,p)$. Then whp
\begin{enumerate}[(i)]
\item $D_2= (1+o(1))2\epsilon^2n$;
    \item\label{i:degthree} $D_3= (1+o(1))\frac{4}{3}\epsilon^3n$;
    \item\label{i:degfour} $\sum_{i\geq 4} i D_i = o(\epsilon^3 n)$;
    \item\label{i:lambda} $\lambda = \frac{\sum_i D_i i(i-1)}{2 \sum_i D_i i} = O(1)$.
\end{enumerate}
\end{lemma}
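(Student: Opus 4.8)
\emph{The plan} is to derive all four estimates from two standard structural facts about the near-critical giant component together with elementary Euler-type counting, thereby isolating the single genuinely near-critical input. The two facts I would use, both extractable from {\L}uczak's analysis \cite{Luczakcycle} (see also \cite{Janson}), are that whp the largest component of $G(n,p)$ satisfies
\[
|L_1| = (1+o(1))2\epsilon n , \qquad e(L_1) = |L_1| + (1+o(1))\tfrac{2}{3}\epsilon^3 n ,
\]
so that its \emph{excess} $e(L_1)-|L_1|$ equals $(1+o(1))\tfrac{2}{3}\epsilon^3 n$, and that its $2$-core $C$ has $|C| = (1+o(1))2\epsilon^2 n$ vertices.

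\emph{The reduction.} Iteratively deleting vertices of degree at most one from $L_1$ produces $C$ and never changes the difference (number of edges)$-$(number of vertices), so whp $e(C)-|C| = (1+o(1))\tfrac{2}{3}\epsilon^3 n$. Combined with the identities $\sum_{i\geq2}D_i = |C|$ and $\sum_{i\geq2}iD_i = 2e(C)$, this gives
\[
\sum_{i\geq 3}(i-2)D_i = 2\bigl(e(C)-|C|\bigr) = (1+o(1))\tfrac{4}{3}\epsilon^3 n .
\]
The next step is to establish the two bounds
\[
\sum_{i\geq 4} i D_i = o(\epsilon^3 n) \qquad\text{ and }\qquad \sum_{i\geq 4} i(i-1)D_i = o(\epsilon^2 n) ,
\]
which say that the kernel of $L_1$ is essentially cubic; granting these, the rest is immediate. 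The first of them is exactly \ref{i:degfour}, and with the displayed identity it yields $D_3 = (1+o(1))\tfrac{4}{3}\epsilon^3 n$, which is \ref{i:degthree}. Since then $\sum_{i\geq3}D_i \leq \tfrac{1}{3}\sum_{i\geq3}iD_i = O(\epsilon^3 n) = o(\epsilon^2 n)$, one obtains $D_2 = |C| - \sum_{i\geq3}D_i = (1+o(1))2\epsilon^2 n$, which is (i). Finally, in the definition of $\lambda$ the numerator is $\sum_i i(i-1)D_i = 2D_2 + 6D_3 + \sum_{i\geq4}i(i-1)D_i = (1+o(1))4\epsilon^2 n$ and the denominator is $2\sum_i iD_i = 2\bigl(2D_2 + 3D_3 + \sum_{i\geq4}iD_i\bigr) = (1+o(1))8\epsilon^2 n$, so $\lambda = \tfrac{1}{2} + o(1) = O(1)$, which is \ref{i:lambda}.

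\emph{The main obstacle} will be the pair of ``essentially cubic'' bounds. The guiding intuition is a branching-process approximation: a vertex $v$ should land in the $2$-core with $\deg_C(v) \geq k$ essentially iff at least $k$ of the components of $G - v$ meeting $N_G(v)$ are ``heavy'' (reach a cycle, equivalently the rest of the $2$-core), each such component being heavy with probability $(1+o(1))\rho$, where $\rho = \rho(\epsilon) = (2+o(1))\epsilon$ is the survival probability of the Poisson$(1+\epsilon)$ Galton--Watson tree. As $\deg_G(v)$ is asymptotically $\mathrm{Po}(1+\epsilon)$, this predicts $D_i = (1+o(1))\,n\,\mathbb{P}(\mathrm{Po}(2\epsilon) = i)$ for each fixed $i\geq 2$, so that $D_2 = (1+o(1))2\epsilon^2 n$, $D_3 = (1+o(1))\tfrac{4}{3}\epsilon^3 n$, and both $\sum_{i\geq4}iD_i$ and $\sum_{i\geq4}i(i-1)D_i$ are $O(\epsilon^4 n)$, comfortably $o(\epsilon^3 n)$ and $o(\epsilon^2 n)$. (Equivalently: conditioned on its degree sequence, $C$ is a uniform simple graph with that sequence, and the excess degree $\deg_C(v)-2$ of a uniformly random $2$-core vertex is asymptotically $\mathrm{Po}\bigl(2(e(C)-|C|)/|C|\bigr) = \mathrm{Po}\bigl((\tfrac{2}{3}+o(1))\epsilon\bigr)$, which gives the same counts.) Turning this heuristic into a proof valid throughout the window $\epsilon = o(1)$, $\epsilon^3 n \to \infty$ — equivalently, showing that the $2$-core degree sequence concentrates around these values — is exactly the delicate near-critical structural estimate carried out by {\L}uczak \cite{Luczakcycle}, which I would simply invoke; every remaining step is elementary bookkeeping.
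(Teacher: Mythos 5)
The paper gives no proof of this lemma at all --- it is quoted directly from {\L}uczak's work --- and your argument ultimately does the same, deferring every non-elementary input (the asymptotics for $|L_1|$, the excess, the $2$-core size, and the ``essentially cubic'' bounds $\sum_{i\ge 4} iD_i = o(\epsilon^3 n)$ and $\sum_{i\ge 4} i(i-1)D_i = o(\epsilon^2 n)$) to {\L}uczak, so in substance this is the same approach. Your elementary bookkeeping deriving (i), (ii) and (iv) from those inputs is correct, including your (accurate) recognition that (iv) requires the second factorial-moment bound as a separate invocation, since it does not follow from (iii) alone anywhere in the range $\epsilon^3 n \to \infty$.
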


%\begin{lemma}[\cite{Janson}, Theorem 5.15]\label{l:degree in core}
%Let $p=\frac{1+\epsilon}{n}$ such that $n^{-\frac{1}{3}}\ll \epsilon\ll 1$, and for $i\geq 2$, let $D_i$ be the random variable which counts the number of vertices of degree $i$ in the $2$-core of $G(n,p)$. Then 
%\begin{itemize}
%\item $D_2= (1+o_p(1))2\epsilon^2n$.
%    \item $D_3= (1+o_p(1))\frac{4}{3}\epsilon^3n.$
%    \item For a given $i\geq 4$, $D_i=O_p\left(\epsilon^in\right)$.
%\end{itemize}
%\end{lemma}

%(Josh) : A potentially useful reference, if we instead work with the core, is the 1995 paper of Janson "Random regular graphs: Asymptotic distribution and contiguity" which shows that a uniformly distributed random graph is contiguous to the configuration model. May hold for other degree sequences.
Hence, it follows from Lemma \ref{l:luczak} \ref{i:lambda} and \cite[Theorem 10.3]{Frieze} that any property which holds whp in the configuration model for any degree sequence ${\bf d}$ satisfying the conclusions of Lemma \ref{l:luczak}, also holds whp in the $2$-core. 

The genesis of the following argument already appears in work of {\L}uczak \cite{Luczakcycle}, although he does not make the following claim explicit.

\begin{lemma}\label{l:indtopmin}
Let ${\bf d} \in \mathbb{N}^s$ be a degree sequence such that $\min_i d_i \geq 2$. Then $G^*({\bf d})$ can be coupled with some pair $(m,G^*(m,3))$ such that with probability one
\begin{enumerate}[(i)]
\item $m \geq D_3 - \sum_{i \geq 4} i D_i$;
\item $G^*(m,3)$ is an induced topological minor of $G^*({\bf d})$. 
\end{enumerate}
\end{lemma}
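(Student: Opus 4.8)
The plan is to extract from $G^*(\mathbf{d})$ a canonical $3$-regular multigraph $H^*$ by a deterministic cleaning procedure, to check that it sits inside $G^*(\mathbf{d})$ as an induced topological minor, and then to bound its order by an Euler-type count. Write $V_2$, $V_3$ and $V_{\ge 4}$ for the sets of cells of $\mathbf{d}$ of degree $2$, of degree $3$, and of degree at least $4$, so $|V_3| = D_3$. Let $\Lambda := G^*(\mathbf{d})[V_2 \cup V_3]$, the induced subgraph on the cells of degree at most $3$, so that $\Delta(\Lambda) \le 3$. Define $H^*$ to be the kernel of the $2$-core of $\Lambda$: first delete vertices of degree at most $1$ repeatedly until none remain, then suppress every vertex of degree $2$ and discard isolated cycles. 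Since the $2$-core has minimum degree $2$ and $\Delta \le 3$, after suppressing its degree-$2$ vertices every remaining vertex has degree exactly $3$, so $H^*$ is a $3$-regular multigraph; set $m := |V(H^*)|$.

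To see that $H^*$ is an induced topological minor of $G^*(\mathbf{d})$, I would let $K$ consist of the branch vertices of $H^*$ together with the interior vertices of the bare paths of the $2$-core of $\Lambda$ realising the edges of $H^*$. Then $K \subseteq V_2 \cup V_3$, so $G^*(\mathbf{d})[K] = \Lambda[K]$, and it remains to check that $\Lambda[K]$ has no chords: a branch vertex has degree exactly $3$ in $\Lambda$ and all three of its edges survive into the $2$-core, hence lie in $K$; and a vertex on a surviving bare path has exactly its two path-edges inside $K$, since its possible third $\Lambda$-edge leads to a vertex that was pruned and is therefore not in $K$. Hence $\Lambda[K]$ is a subdivision of $H^*$, so $H^*$ is an induced topological minor of $\Lambda$, and of $G^*(\mathbf{d})$.

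For the order of $H^*$, the key observation is that each step of the cleaning procedure removes exactly one vertex, and removes one edge when a degree-$1$ vertex is deleted or a degree-$2$ vertex is suppressed, and no edge when a degree-$0$ vertex is deleted. Writing $a$ for the number of degree-$0$ deletions, $b$ for the number of degree-$1$ deletions, and $q$ for the number of degree-$2$ vertices in the $2$-core of $\Lambda$ (those that get suppressed, together with those on isolated cycles), comparing the numbers of vertices and edges before and after the procedure and using $2\,e(H^*) = 3\,|V(H^*)|$ gives the identities
\[
3\,|V(\Lambda)| - 2\,e(\Lambda) = 3a + b + q \qquad\text{and}\qquad m = |V(\Lambda)| - (a + b + q).
\]
Since $a + b + q \le 3a + b + q$, these yield $m \ge |V(\Lambda)| - \big(3\,|V(\Lambda)| - 2\,e(\Lambda)\big)$. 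Now $|V(\Lambda)| = D_2 + D_3$, and, since each cell of $V_2$ has degree $2$ in $G^*(\mathbf{d})$ and $\Lambda$ is obtained by deleting the $V_{\ge 4}$ cells, $3\,|V(\Lambda)| - 2\,e(\Lambda) = D_2 + e_{G^*(\mathbf{d})}(V_2 \cup V_3,\, V_{\ge 4}) \le D_2 + \sum_{i \ge 4} i D_i$, because each cell of $V_2$ and each edge from $V_2 \cup V_3$ to $V_{\ge 4}$ contributes $1$ to the left-hand side. Combining gives $m \ge D_3 - \sum_{i \ge 4} i D_i$, which is part (i). For part (ii) and the coupling I would argue that, given $m$, $H^*$ is distributed as $G^*(m,3)$: conditioning on which half-edges of $V_2 \cup V_3$ are matched to half-edges of $V_{\ge 4}$, exchangeability of the configuration model makes $\Lambda$ a configuration model on $V_2 \cup V_3$ with the induced degree sequence, and then the standard fact that the $2$-core, and in turn its kernel, of a configuration model is again a configuration model on the relevant degree sequence (see for example \cite{Frieze}) shows that, conditioned on $|V(H^*)| = m$, the multigraph $H^*$ is exactly $G^*(m,3)$. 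The coupling is then: sample $G^*(\mathbf{d})$, read off $m$ and $H^*$, and declare $G^*(m,3) := H^*$; parts (i) and (ii) then hold with probability one.

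The main obstacle I anticipate is getting the bookkeeping in the third paragraph exactly right -- in particular seeing that the ``deficiency'' $3\,|V(\Lambda)| - 2\,e(\Lambda)$ accounts for all vertices lost in the cleaning, with degree-$0$ deletions weighted by $3$ -- since it is this exactness that produces the sharp bound $m \ge D_3 - \sum_{i \ge 4} i D_i$ rather than a lossier one. The distributional step is routine once the appropriate structure theorems for the configuration model are cited, and the induced-ness check is a short case analysis.
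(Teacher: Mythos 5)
Your proposal is correct, but it is organized quite differently from the paper's proof. The paper never leaves the configuration model: it partially exposes the matching by deferred decisions (first all pairs meeting cells of degree at least four, then pairs at ``leaf'' cells, then a contraction step for degree-two cells), so that uniformity of the unexposed configuration --- and hence the law $G^*(m,3)$ of the final object --- is maintained explicitly at every stage, and the lower bound on the number of degree-three cells is obtained by tracking an invariant through this exposure. You instead define $H^*$ deterministically on the realized multigraph as the kernel of the $2$-core of $\Lambda = G^*(\mathbf{d})[V_2\cup V_3]$, check the induced topological minor property directly, and prove $m \geq D_3 - \sum_{i\geq 4} i D_i$ by the $3|V|-2|E|$ count; this counting is correct (loops, multi-edges and isolated cycles are handled consistently by your weighting) and gives exactly the paper's bound in an arguably tidier way. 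The one place where your write-up leans on outside input is the distributional step: the claims that, conditioned on the relevant degree data, $\Lambda$, its $2$-core and then its kernel are again configuration models are true, but they are precisely what the paper's deferred-decisions and contraction steps establish, and the kernel statement in this exact form should be checked against the cited reference; in other words, your reduction is valid, but the conditional uniformity of the kernel is the real content of the lemma, so a self-contained version of your proof would still need essentially the paper's exposure argument. Two minor simplifications: since the $2$-core is an induced subgraph of $\Lambda$ and isolated cycles are entire components, $\Lambda[K]$ is automatically the $2$-core minus its isolated cycles, so your chord case-analysis can be shortened; and $m$ is automatically even because $H^*$ is cubic, so the pair $(m,G^*(m,3))$ is always well defined.
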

\begin{proof}
The idea behind the construction is simple. We expose our multigraph $G^*({\bf d})$ and first delete all the vertices of degree at least four. We then take the $2$-core of this graph, which we can form by recursively deleting leaves, leaving us with an induced subgraph in which all degrees are two or three, which will contain the desired cubic graph as an induced topological minor after contracting all maximal bare paths. It remains to show that the subgraph we obtain in this way is distributed as $G^*(m,3)$ for some appropriate $m$.

In order to show this, we will utilise the \emph{principle of deferred decisions} to partially expose the configuration giving rise to $G^*({\bf d})$, allowing us to assume that the remaining part of the configuration is still uniformly distributed on the unmatched half-edges.

Let $\mathcal{W}({\bf d}) = \{W_i \colon i \in [s]\}$ be the set of cells used to construct $G^*({\bf d})$. Given a partial matching $E$ of the half-edges in $W := \bigcup_{i \in [s]} W_i$ we say that a half-edge $e \in W_i$ is a \emph{leaf with respect to $E$} if it is the only half-edge in $W_i$ which is unmatched in $E$.

We perform the following steps:
\begin{enumerate}[S1)]
    \item\label{i:one} We expose the matching edges containing each half-edge which is contained in a cell of degree at least $4$. Let $M_1$ be the set of edges contained in this partial matching.
    \item\label{i:two} We initially set $M_2 = M_1$ and then recursively do the following, as long as there is some half-edge $e$ which is a leaf with respect to $M_2$: Let $e$ be a leaf with respect to $M_2$. We expose the matching edge containing $e$ and add this edge to $M_2$.
\end{enumerate}

Let $\mathcal{W}_1 =\{ W_i \setminus \bigcup M_2 \colon i \in [s] \}$ and let $\mathcal{W}_2 = \{W'_1,\ldots, W'_r\}$ be the set of non-empty cells in $\mathcal{W}_1$. We note the following properties of $\mathcal{W}_2$:
\begin{enumerate}[a)]
    \item\label{i:degree} Each cell in $\mathcal{W}_2$ has size two or three;
    \item\label{i:cubic} The number of cells of degree three in $\mathcal{W}_2$ is at least $D_3 - \sum_{i \geq 4} i D_i$;
    \item\label{i:uniform} Conditioned on the outcome $M_2$ of Steps \ref{i:one} and \ref{i:two} the remaining configuration $M' = M \setminus M_2$ is uniformly distributed over all configurations on $\mathcal{W}_2$;
    \item\label{i:subgraph} The graph $G(\mathcal{W}_2,M')$ is an induced subgraph of $G(\mathcal{W},M)$.
\end{enumerate}

Indeed, the first is clear by construction. To see that the second holds, we note that, since $M_1$ contains at most $\sum_{i \geq 4} i D_i$ edges and all cells initially have size at least two, the total number of cells of size three which contain a half-edge in $M_1$ plus the total number of leaves with respect to $M_1$ is at most $\sum_{i \geq 4} i D_i$. However in each recursion step in Step \ref{i:two} this property is maintained for $M_2$, and hence the total number of cells of degree three in $\mathcal{W}_2$ is at least $D_3 - \sum_{i \geq 4} i D_i$. The third holds as mentioned by the principle of deferred decisions, and the fourth holds by the construction of the graphs $G(\mathcal{W}_2,M')$ and $G(\mathcal{W},M)$.

\begin{figure}[!ht]
    \centering
\definecolor{ffqqqq}{rgb}{1.,0.,0.}
\definecolor{ududff}{rgb}{0.30196078431372547,0.30196078431372547,1.}
\definecolor{qqqqff}{rgb}{0.,0.,1.}
\begin{tikzpicture}[line cap=round,line join=round,>=triangle 45,x=1.0cm,y=1.0cm,scale=0.95]
\clip(1.,1.) rectangle (18.,6.8);
\draw [line width=0.8pt,color=qqqqff] (2.0143416878467284,3.7039139741947706) circle (0.2856590801756741cm);
\draw [line width=0.8pt,color=qqqqff] (2.,2.5) circle (0.2856590801756742cm);
\draw [line width=0.8pt,color=qqqqff] (4.,4.9) circle (0.2856590801756711cm);
\draw [line width=0.8pt,color=qqqqff] (4.,3.713399960778924) circle (0.2856590801756742cm);
\draw [line width=0.8pt,color=qqqqff] (4.018999308401302,2.5049612537457993) circle (0.2856590801756735cm);
\draw [line width=0.8pt,color=qqqqff] (6.0071302478802595,3.6719478074947682) circle (0.28565908017567565cm);
\draw [line width=0.8pt,color=qqqqff] (6.,2.5) circle (0.28565908017567554cm);
\draw [line width=0.8pt,color=qqqqff] (8.022219371940848,2.512842452406194) circle (0.28565908017567715cm);
\draw [line width=0.8pt,color=qqqqff] (12.012643789666217,3.650460788331213) circle (0.28565908017567615cm);
\draw [line width=0.8pt,color=qqqqff] (12.,2.5) circle (0.28565908017567654cm);
\draw [line width=0.8pt,color=qqqqff] (14.872306598199117,3.227046747301644) circle (0.28565908017567654cm);
\draw [line width=0.8pt,color=qqqqff] (14.018660003922108,3.7123773129820323) circle (0.28565908017567543cm);
\draw [line width=0.8pt,color=qqqqff] (14.03825994394607,2.5204424965005203) circle (0.28565908017567543cm);
\draw [line width=0.8pt,color=qqqqff] (17.031077998434345,2.488414103880816) circle (0.28565908017567476cm);
\draw [->,line width=0.8pt] (9.50296741660591,3.530694150898469) -- (10.746242181215761,3.547053029380177);
\draw [line width=0.8pt, color=black] (1.46,4.16)-- (1.46,2)-- (2.48,2)-- (2.48,4.16);
\draw [line width=0.8pt, color=black] (2.48,4.16)-- (1.46,4.16);
\draw [line width=0.8pt, color=black] (3.52,5.4)-- (3.52,2)-- (4.5,2)-- (4.5,5.4);
\draw [line width=0.8pt, color=black] (4.5,5.4)-- (3.52,5.4);
\draw [line width=0.8pt, color=black] (5.5,4.25)-- (5.5,2)-- (6.47,2)-- (6.47,4.25);
\draw [line width=0.8pt, color=black] (11.53,4.14)-- (11.53,2)-- (12.46,2)-- (12.46,4.14);
\draw [line width=0.8pt, color=black] (13.45,4.26)-- (13.45,2)-- (15.4,2)-- (15.4,4.26);
\draw [line width=0.8pt, color=black] (16.5,3)-- (16.5,2)-- (17.5,2)-- (17.5,3);
\draw [line width=0.8pt, color=black] (5.5,4.25)-- (6.47,4.25);
\draw [line width=0.8pt, color=black] (11.53,4.14)-- (12.46,4.14);
\draw [line width=0.8pt, color=black] (13.45,4.26)-- (15.4,4.26);
\draw [line width=0.8pt, color=black] (7.53,3)-- (7.53,2.)-- (8.52,2)-- (8.52,3);
\draw [line width=0.8pt, color=black] (7.53,3)-- (8.52,3);
\draw [line width=0.8pt, color=black] (16.5,3)-- (17.5,3);
\draw [line width=0.8pt] (2.299707270515671,3.7168597964871934)-- (3.714687311420669,3.699336539344371);
\draw [line width=0.8pt] (2.2856588772720468,2.499659526086026)-- (3.7343969509671244,2.4804131737858373);
\draw [line width=0.8pt,color=ffqqqq] (4.253353915366476,4.768042038302773)-- (5.726434307855465,3.724965722772624);
\draw [line width=0.8pt] (6.285073162913304,2.481713341655241)-- (7.73661517966188,2.518442045203212);
\draw [line width=0.8pt] (12.298133231703288,3.640617589045638)-- (13.744421431065637,3.6324128505974227);
\draw [line width=0.8pt] (12.28513213604817,2.5173428682582175)-- (13.753119579341123,2.5032354467891285);
\draw [line width=0.8pt] (15.152100552437101,3.169458091659352)-- (16.74671107448447,2.515553791611107);
\begin{scriptsize}
\draw[color=qqqqff] (2.0143416878467284,3.7039139741947706) node {$u$};
\draw[color=qqqqff] (2.,2.5) node {$v$};
\draw[color=qqqqff] (4.,4.9) node {$e$};
\draw[color=qqqqff] (4.,3.713399960778924) node {$w$};
\draw[color=qqqqff] (4.018999308401302,2.5049612537457993) node {$x$};
\draw[color=qqqqff] (6.0071302478802595,3.6719478074947682) node {$f$};
\draw[color=qqqqff] (6.,2.5) node {$y$};
\draw[color=qqqqff] (8.022219371940848,2.512842452406194) node {$z$};
\draw[color=qqqqff] (12.012643789666217,3.650460788331213) node {$u$};
\draw[color=qqqqff] (12.,2.5) node {$v$};
\draw[color=qqqqff] (14.872306598199117,3.227046747301644) node {$y$};
\draw[color=qqqqff] (14.018660003922108,3.7123773129820323) node {$w$};
\draw[color=qqqqff] (14.03825994394607,2.5204424965005203) node {$x$};
\draw[color=qqqqff] (17.031077998434345,2.488414103880816) node {$z$};
\draw[color=black] (2,1.6) node {$V_1$};
\draw[color=black] (4,1.6) node {$V_2$};
\draw[color=black] (6,1.6) node {$V_3$};
\draw[color=black] (8,1.6) node {$V_4$};
\draw[color=black] (12,1.6) node {$V_1$};
\draw[color=black] (14.5,1.6) node {$V_{2,3}$};
\draw[color=black] (17,1.6) node {$V_4$};
\draw[color=black] (3.9991271667776846,6.440791830653201) node {\large $\mathcal{V}$};
\draw[color=black] (15.049885732511992,6.3696114212120785) node {\large $\mathcal{V'}$};
\draw[color=ffqqqq] (5,4.5) node {$m$};
\end{scriptsize}
\end{tikzpicture}
    \caption{The effect of contracting an edge $m$ in a configuration on a set of cells $\mathcal{V}$. The cells $V_2$ and $V_3$ containing the two half-edges $e$ and $f$ matched by $m$ are replaced by a single cell $V_{2,3}$.}
    \label{f:contraction}
\end{figure}
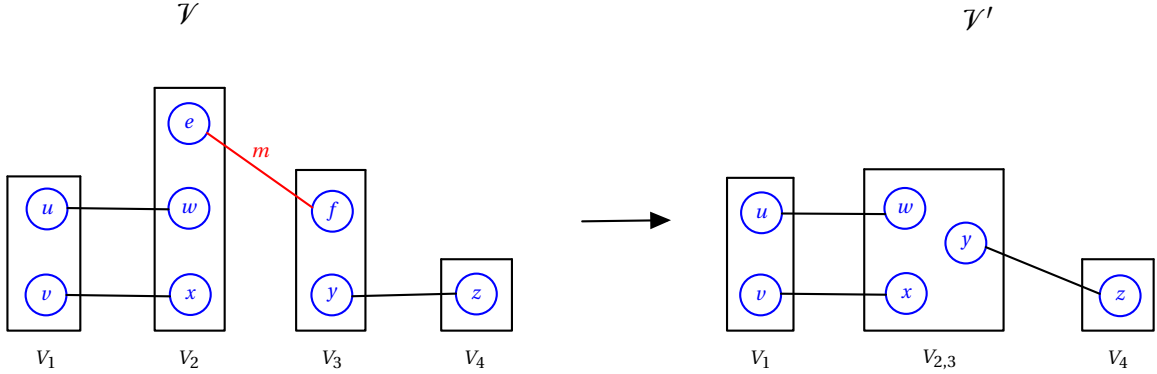

Modelling the contraction of bare paths in the configuration model is a little fiddly. Let $\mathcal{V}$ be an arbitrary collection of cells and let $V_i$ be a cell in $\mathcal{V}$. Let $e$ be an arbitrary half-edge in $V_i$ and let $m=\{e,f\}$ be a matching edge containing $e$, where $f \in V_j$. Let $\mathcal{V}'$ be the set of cells obtained from $\mathcal{V}$ by replacing $V_i$ and $V_j$ with a single cell $V_{ij} = (V_i \cup V_j) \setminus \{e,f\}$, see Figure \ref{f:contraction}. We say $\mathcal{V}'$ is obtained from $\mathcal{V}$ by \emph{contracting} $m$. Then, it is clear that 
\begin{align}
    &\text{for any configuration $M$ on $\mathcal{V}$ containing $m$, the graph $G(\mathcal{V}', M \setminus \{m\})$ is the minor of $G(\mathcal{V},M)$}\nonumber \\
    &\text{obtained by contracting an edge between $i$ and $j$. Furthermore, if the degree of $V_i$ is two,} \nonumber \\
    &\text{then $G(\mathcal{V},M)$ is obtained from $G(\mathcal{V}', M \setminus \{m\})$ by subdividing an edge (Or adding an isolated} \nonumber\\
   &\text{loop, in the case where $m$ is fully contained in $V_i$).} \tag{$\dagger$}\label{e:contract}
\end{align}
We then perform the following final step on $\mathcal{W}_2$:
\begin{enumerate}[S3)]
    \item\label{i:three} Recursively, as long as there is some cell of size two, let $W$ be a cell of size two and let $e$ be an arbitrary half-edge in $W$. We expose the matching edge $m$ containing $e$ and we contract $m$.  
\end{enumerate}

Let the set of contracted edges be $M_3$ and the resulting set of (non-empty) cells be $\mathcal{W}_3$. Since each cell in $\mathcal{W}_2$ has size two or three, each contraction decreases the number of cells of size two by one, but does not change the number of cells of size three. Hence, every cell in $\mathcal{W}_3$ has size three, and  $\mathcal{W}_3$ contains at least $D_3 - \sum_{i \geq 4} i D_i$ many cells. Since $\bigcup \mathcal{W}_3 \cup \bigcup M_3 = \bigcup \mathcal{W}_2$, conditioned on the outcome $M_3$ of Step \ref{i:three} the remaining configuration $M'' = M' \setminus M_3$ is uniformly distributed over all configurations on $\mathcal{W}_3$.

Furthermore, by \eqref{e:contract} it follows that $G(\mathcal{W}_2,M')$ is a subdivision of $G(\mathcal{W}_3,M'')$, and hence $G(\mathcal{W}_3,M'')$ is an induced topological minor of $G(\mathcal{W},M)$. However, since $M''$ is uniformly distributed over all configurations on $\mathcal{W}_3$, each cell of $\mathcal{W}_3$ has size three and there are at least $D_3 - \sum_{i \geq 4} i D_i$ many cells in $\mathcal{W}_3$, the theorem follows.
\end{proof}

It is known that $G^*(m,3)$ has good expansion properties. For example, the following theorem is a consequence of work of Bollob\'{a}s \cite{Bollobasisometric}, although its proof is just a, by now relatively standard, first moment calculation.

\begin{theorem}\label{t:regularexpander}
For any $d \geq 3$ there exists an $\alpha:=\alpha(d) >0$ such that $G^*(m,d)$ is an $\alpha$-expander with probability tending to $1$ as $m \to \infty$.
\end{theorem}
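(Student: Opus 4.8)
The plan is a first-moment argument in the configuration model, bounding the expected number of \emph{bad cuts}. Fix a constant $\alpha = \alpha(d) > 0$ to be chosen at the end, and call a set $S \subseteq [m]$ with $1 \le |S| \le m/2$ \emph{bad} if $e_{G^*(m,d)}(S,S^c) < \alpha d|S|$. Since $G^*(m,d)$ is $d$-regular we have $d(S) = d|S| \le d(S^c)$ for every such $S$, so if no $S$ with $|S|\le m/2$ is bad then $\Phi(G^*(m,d)) \ge \alpha$ (the case $|S| > m/2$ being handled by passing to $S^c$); hence it suffices to show $\mathbb{E}\bigl[\,\#\{S \text{ bad}\}\,\bigr] = \sum_S \mathbb{P}(S\text{ bad}) = o(1)$. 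To estimate $\mathbb{P}(S\text{ bad})$ for a fixed $S$ with $|S|=s$, I would work directly with the uniformly random perfect matching on the $N := dm$ half-edges (assuming, as one must, that $dm$ is even). If $e(S,S^c) < \alpha ds$ then at least $a$ of the $ds$ half-edges incident with $S$ are matched among themselves, for some even integer $a$ with $a \ge ds(1-\alpha) - 1$, and hence some $a$-subset $T$ of these half-edges is a union of matching edges. Since for any fixed $a$-set $T$ of half-edges
\[
\mathbb{P}\bigl(T \text{ is a union of matching edges}\bigr) = \frac{(a-1)!!\,(N-a-1)!!}{(N-1)!!} = \frac{\binom{N/2}{a/2}}{\binom{N}{a}},
\]
a union bound over the choice of $T$ gives $\mathbb{P}(S\text{ bad}) \le \binom{ds}{a}\binom{N/2}{a/2}\big/\binom{N}{a}$.

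Writing $x = s/m$ and letting $h$ be the binary entropy function, standard Stirling estimates then give $\binom{ds}{a} = \binom{ds}{ds-a} \le 2^{(1+o(1))\,ds\,h(\alpha)}$ (as $ds - a \le \alpha ds + 1 \le ds/2$) and $\binom{N/2}{a/2}\big/\binom{N}{a} \le \mathrm{poly}(m)\cdot 2^{-(N/2)\,h(a/N)}$, where $h(a/N) \ge h\bigl(x(1-\alpha)\bigr)$ because $x(1-\alpha) \le a/N \le x \le \tfrac12$ and $h$ is increasing on $[0,\tfrac12]$. Thus $\mathbb{P}(S\text{ bad}) \le \mathrm{poly}(m)\cdot 2^{(1+o(1))\,dm\,(\,x\,h(\alpha) - \frac12 h(x(1-\alpha))\,)}$, and summing over the $\binom{m}{xm}\le 2^{m\,h(x)}$ choices of $S$ of size $s = xm$ and then over $s$, the expected number of bad sets is at most $\mathrm{poly}(m)\sum_s 2^{(1+o(1))\,m\,g(x,\alpha)}$, where
\[
g(x,\alpha) = h(x) + dx\,h(\alpha) - \tfrac{d}{2}\,h\bigl(x(1-\alpha)\bigr).
\]
It then remains to check that, for $d \ge 3$, one may choose $\alpha = \alpha(d) > 0$ small enough that $g(x,\alpha) < 0$ for every $x \in (0,\tfrac12]$, and that this bound decays fast enough as $x \to 0$ (where $g(x,\alpha) = -\Omega\bigl(x\log\tfrac1x\bigr)$) to force $\sum_s 2^{(1+o(1))mg(x,\alpha)} \to 0$. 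For $\alpha = 0$ this is immediate, since $g(x,0) = (1 - \tfrac d2)h(x) \le -\tfrac12 h(x) < 0$; for small $\alpha > 0$ one controls the $\alpha$-dependent terms using that $h(x)/x$ is bounded below by $2$ on $(0,\tfrac12]$ and tends to $\infty$ as $x \to 0$, so that $g$ stays negative throughout once $h(\alpha)$ is small compared to $1 - \tfrac 2d > 0$. This pins down the admissible range of $\alpha$ (e.g.\ one needs $h(\alpha) < \tfrac13$ when $d=3$) and finishes the argument.

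The main obstacle is the regime $s = \Theta(m)$: there the union bound ranges over $2^{\Theta(m)}$ sets, so we genuinely need $\mathbb{P}(S\text{ bad})$ to be exponentially small in $m$ with a good enough rate, and the margin is tight. It comes precisely from $h(\alpha) \to 0$ as $\alpha \to 0$ — that is, forcing almost all of the $ds$ half-edges of $S$ back into $S$ is very costly — played off against the entropy $h(x)$ of choosing $S$; this balance closes only because $d \ge 3$ supplies the factor $\tfrac d2 > 1$ in $g$, and it is important here to use the sharp estimate $\binom{N/2}{a/2}/\binom{N}{a} \approx 2^{-(N/2)h(a/N)}$ rather than the cruder sequential bound. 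The remaining points are routine: the parity adjustments in the definition of $a$, the polynomial Stirling error factors, and the fact that $G^*(m,d)$ may be a multigraph (loops lie in no cut and do not affect $d(S) = d|S|$, so they are harmless).
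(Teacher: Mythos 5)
The paper does not actually prove this theorem: it quotes it from Bollob\'as \cite{Bollobasisometric}, remarking that the proof is a ``by now relatively standard, first moment calculation''. Your proposal is exactly that calculation, and its skeleton is sound: the reduction to sets of size at most $m/2$, the exact identity $\frac{(a-1)!!\,(N-a-1)!!}{(N-1)!!}=\binom{N/2}{a/2}/\binom{N}{a}$, the union bound over $a$-subsets of the half-edges at $S$, and the negativity of $g(x,\alpha)=h(x)+dxh(\alpha)-\frac d2 h(x(1-\alpha))$ on $(0,\frac12]$ for small $\alpha$ all check out (for the last point, concavity gives $h(x(1-\alpha))\geq(1-\alpha)h(x)$ and $h(x)\geq 2x$, so $g(x,\alpha)\leq x\left(dh(\alpha)+d\alpha-(d-2)\right)<0$ once $h(\alpha)+\alpha<1-\frac2d$; this is where $d\geq3$ enters, essentially as you say).

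There is, however, one concrete step that fails as written, precisely in the case $d=3$ that the paper needs: your final bookkeeping, ``expected number of bad sets $\leq \mathrm{poly}(m)\sum_s 2^{(1+o(1))mg(x,\alpha)}$, and it suffices that $\sum_s 2^{(1+o(1))mg}\to0$'', does not close the argument for sets of \emph{bounded} size. For fixed $s$ one has $mg(s/m,\alpha)=s\bigl(1-\frac d2(1-\alpha)\bigr)\log_2 m+O_s(1)$, so $2^{mg}$ is only polynomially small, about $m^{-(\frac d2-1)s+O(\alpha s)}$ (for $d=3$, $s=1$ this is $m^{-1/2+O(\alpha)}$); the natural polynomial prefactor coming from $\binom{N}{a}\geq 2^{Nh(a/N)}/(N+1)$ is $\Theta(m)$ and swamps it, and the $\pm1$ slack in your choice of $a$ shifts the exponent by $\Theta(\log m)$ at this scale, the same order as the main term, so it cannot be hidden in the $(1+o(1))$. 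The repair is routine but must be made explicit: take $a$ to be the least even integer at least $ds(1-\alpha)$ (the number of internally matched half-edges is even and exceeds $ds(1-\alpha)$), use the sharper $\binom{N}{a}\geq 2^{Nh(a/N)}/O(\sqrt{a})$, or -- simplest -- handle all $s\leq cm$ (a small constant $c$) with exactly the ``cruder sequential bound'' $\mathbb{P}(S\text{ bad})\leq\binom{ds}{a}(a/N)^{a/2}$ that you dismiss: it gives an expected count of order $\sum_s (Cs/m)^{s(\frac d2(1-\alpha)-1)}=o(1)$ in that range (and for $s=1$, $d=3$ parity even forces $e(S,S^c)\geq1$). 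Your remark that the entropy-sharp estimate is essential is thus only half the story: it is needed for $s=\Theta(m)$, while the sequential bound is what saves the small-$s$ regime; with that two-range split the proof is complete.
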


We now have the necessary ingredients to complete the proof of Theorem \ref{t:weakexpander}.
\begin{proof}[Proof of Theorem \ref{t:weakexpander}]
Let $C$ be the $2$-core of the largest component of $G(n,p)$, which we note is an induced subgraph of $G(n,p)$. Let us condition on the degree sequence ${\bf d}$ of $C$, which we may assume satisfies the conclusions of Lemma \ref{l:luczak}. In particular, by the comment after Lemma \ref{l:luczak}, any property which holds in $G^*({\bf d})$ whp also holds in $C$ whp. Let us write $D_i$ for the number of entries in ${\bf d}$ which are equal to $i$.

By Lemma \ref{l:indtopmin} there exists some random variable $m$ such that we can couple $G^*(\bf{d})$ with the pair ${(m, G^*(m,3))}$ such that with probability one,
\[
m \geq D_3 - \sum_{i \geq 4}i D_i =\left(\frac{4}{3} +o(1)\right)\epsilon^3 n := \delta \epsilon^3 n
\]
and $G^*(m,3)$ is an induced topological minor of $G^*(d)$. Since, by Theorem \ref{t:regularexpander}, whp $G^*(m,3)$ is an $\alpha$-expander, where $\alpha:=\alpha(3)$ is as in Theorem \ref{t:regularexpander}, it follows that whp $G^*(\bf{d})$ contains an induced topological minor of a $3$-regular graph on at least $\delta\epsilon^3 n$ vertices which is an $\alpha$-expander. It follows that whp the same is true for $C$ and so, since $C$ is an induced subgraph of $G(n,p)$, the conclusion holds.
\end{proof}

Note that our bounds on $\alpha$ and $\delta$ can be made relatively explicit: Lemma \ref{l:luczak} implies that we can take $\delta \approx \frac{4}{3}$, and $\alpha$ can be chosen to be the appropriate $\alpha(3)$ from Theorem \ref{t:regularexpander}, which is shown in \cite{Bollobasisometric} to be at least $\frac{2}{11}$. The current best known bound is $\alpha(3) \geq \frac{1}{4.95}$, which was shown by Kostochka and Melnikov \cite{Kostochka}.

\begin{theorem}\label{t:rw}
Let $\epsilon = \epsilon(n)>0$ be such that $\epsilon^3n \rightarrow \infty$ and $\epsilon = o(1)$, and let $p=\frac{1+\epsilon}{n}$. Then whp \[
\textrm{rw}(G(n,p)) = \Omega(\epsilon^3 n).
\]
\end{theorem}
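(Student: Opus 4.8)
The plan is to deduce Theorem \ref{t:rw} from Theorem \ref{t:weakexpander} together with Lemmas \ref{l:smallcut} and \ref{l:rank}, in exactly the same spirit as the proof of Theorem \ref{t:rankwidthdirect}, but now working with the $3$-regular expander $H$ rather than with an induced subgraph on the vertex set of a spanning tree. First I would invoke Theorem \ref{t:weakexpander} to obtain constants $\alpha, \delta > 0$ such that whp $G(n,p)$ contains a graph $H$ as an induced topological minor with $|V(H)| \geq \delta \epsilon^3 n$ and such that $H$ is a $3$-regular $\alpha$-expander. The key point is that expansion controls the bisection width: for any partition $(A,B)$ of $V(H)$ with $|A|, |B| \geq |V(H)|/3$, writing $d(A) = 3|A|$ and similarly for $B$, the edge-expansion inequality $e(A,B) \geq \alpha \min\{d(A), d(B)\} = 3\alpha \min\{|A|, |B|\} \geq \alpha |V(H)| \geq \alpha \delta \epsilon^3 n$ shows that $b(H) = \Omega(\epsilon^3 n)$.

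Next I would turn this bisection-width bound into a cutrank bound. By Lemma \ref{l:smallcut}, if $\textrm{rw}(H) \leq k$ then there is a partition $V(H) = A \cup B$ with $|A|, |B| \geq |V(H)|/3$ and $\rho_H(A,B) \leq k$. For such a partition the adjacency matrix $N_{A,B}$ has at least $e(A,B) \geq \alpha \delta \epsilon^3 n$ non-zero entries, and since $\Delta(H) = 3$ every row and column of $N_{A,B}$ has at most $3$ non-zero entries. Applying Lemma \ref{l:rank} with $M = 3$ gives $\rho_H(A,B) = \textrm{rank}(N_{A,B}) \geq \frac{\alpha \delta \epsilon^3 n}{9}$, so $k \geq \frac{\alpha \delta}{9} \epsilon^3 n$, i.e. $\textrm{rw}(H) = \Omega(\epsilon^3 n)$. (One should observe that $|V(H)| \to \infty$ since $\epsilon^3 n \to \infty$, so $H$ has at least two vertices and Lemma \ref{l:smallcut} applies.)

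Finally, since $H$ is an induced topological minor of $G(n,p)$, Lemma \ref{l:indtopminorvtxminor} tells us $H$ is a vertex-minor of $G(n,p)$, and so by \eqref{e:vtxminor} we get $\textrm{rw}(G(n,p)) \geq \textrm{rw}(H) = \Omega(\epsilon^3 n)$, which is the claimed bound. (Alternatively, one can cite the stated fact that rank-width is non-increasing under taking induced topological minors, which is the combination of \eqref{e:vtxminor} and Lemma \ref{l:indtopminorvtxminor} recorded just after that lemma.) I do not anticipate a genuine obstacle here: all the heavy lifting — the existence of the expanding induced topological minor — is done by Theorem \ref{t:weakexpander}, and everything else is the routine bisection-width-to-rank-width pipeline already used in Section \ref{s:direct}; the only points requiring a little care are bookkeeping the constants and noting that the hypothesis $\epsilon^3 n \to \infty$ guarantees $|V(H)| \to \infty$ so that the relevant lemmas are applicable.
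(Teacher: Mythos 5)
Your proposal is correct and follows essentially the same route as the paper's own proof: extract the $3$-regular $\alpha$-expander $H$ from Theorem \ref{t:weakexpander}, use the expansion together with Lemmas \ref{l:smallcut} and \ref{l:rank} (with $M=3$) to get $\textrm{rw}(H)\geq \frac{\alpha\delta}{9}\epsilon^3 n$, and transfer this to $G(n,p)$ via Lemma \ref{l:indtopminorvtxminor} and \eqref{e:vtxminor}. The constants and bookkeeping match the paper's argument, so there is nothing further to add.
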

\begin{proof}
By Theorem \ref{t:weakexpander} there exist constants $\alpha,\delta >0$ such that whp $G(n,p)$ contains an induced topological minor of a graph $H$ with $|V(H)| \geq \delta \epsilon^3 n:=m$ which is a $3$-regular $\alpha$-expander. By Lemma \ref{l:indtopminorvtxminor}, $H$ is a vertex-minor of $G$ and hence $\textrm{rw}(G) \geq \textrm{rw}(H)$.

Let $S$ be an arbitrary subset of $V(H)$ such that $ \frac{m}{3} \leq |S|\leq \frac{m}{2}$. Since $H$ is a 3-regular $\alpha$-expander it follows that
\[
e_H(S,S^c) \geq \alpha d_H(S) \geq \alpha m.
\]

Hence, by Lemma \ref{l:rank}, we can conclude that $\rho_H(S,S^c) \geq \frac{\alpha m }{9}$. Since $S$ was arbitrary, it is easy to conclude from Lemma \ref{l:smallcut} that 
\[
\textrm{rw}(G) \geq \textrm{rw}(H) \geq \frac{\alpha m }{9} = \frac{\alpha \delta}{9} \epsilon^3 n.
\]
\end{proof}

\begin{corollary}\label{c:tw}
Let $\epsilon = \epsilon(n)>0$ be such that $\epsilon^3n \rightarrow \infty$ and $\epsilon = o(1)$ and let $p=\frac{1+\epsilon}{n}$. Then whp \[
\textrm{tw}(G(n,p)) = \Omega(\epsilon^3 n).
\]
\end{corollary}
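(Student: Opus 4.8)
The plan is to derive the statement immediately from Theorem \ref{t:rw} together with Oum's inequality (Theorem \ref{t:rwtw}). By Theorem \ref{t:rwtw}, every graph $G$ satisfies $\textrm{rw}(G) \le \textrm{tw}(G) + 1$, so $\textrm{tw}(G(n,p)) \ge \textrm{rw}(G(n,p)) - 1$. By Theorem \ref{t:rw}, whp $\textrm{rw}(G(n,p)) \ge c\,\epsilon^3 n$ for some constant $c>0$, and since $\epsilon^3 n \to \infty$ the additive $-1$ is negligible, giving $\textrm{tw}(G(n,p)) \ge c\,\epsilon^3 n - 1 = \Omega(\epsilon^3 n)$ whp. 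There is essentially no obstacle here: all the work has already gone into Theorem \ref{t:weakexpander} and Theorem \ref{t:rw}.

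Alternatively, one can bypass rank-width entirely and argue directly from Theorem \ref{t:weakexpander}. That result produces, whp, a graph $H$ which is a $3$-regular $\alpha$-expander on $m \ge \delta\epsilon^3 n$ vertices and which occurs in $G(n,p)$ as an induced topological minor, hence in particular as a minor; since tree-width is minor-monotone, $\textrm{tw}(G(n,p)) \ge \textrm{tw}(H)$. It then remains to observe that a bounded-degree expander has linear tree-width. Concretely, let $(A,S,B)$ be a $\left(k,\tfrac12\right)$-balanced partition of $H$ with $|A| \le |B|$; then $|A| \ge \tfrac13(m - |S|)$, and as $H$ is $3$-regular we have $d_H(A) = 3|A| \le d_H(A^c)$, so expansion gives $e_H(A,A^c) \ge \alpha\, d_H(A) \ge \alpha(m - |S|)$. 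Since $e(A,B) = 0$, every edge leaving $A$ lands in $S$, whence $e_H(A,A^c) = e_H(A,S) \le d_H(S) = 3|S|$. Combining, $\alpha(m - |S|) \le 3|S|$, i.e. $|S| \ge \tfrac{\alpha}{3+\alpha} m$. Thus $\textrm{sep}_{1/2}(H) \ge \tfrac{\alpha}{3+\alpha} m$, and Lemma \ref{l:twsep} yields $\textrm{tw}(H) \ge \tfrac{\alpha}{3+\alpha} m - 1 = \Omega(\epsilon^3 n)$.

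Either route suffices, and I would present the first, since it is a single line; the second is worth mentioning only to stress that the lower bound does not in fact rely on the rank-width machinery. Combined with the matching upper bound $\textrm{tw}(G(n,p)) = O(\epsilon^3 n)$ sketched in the introduction (the non-largest components have tree-width at most one, and the tree-width of the largest component is bounded by its number of excess edges, which is $\Theta(\epsilon^3 n)$ whp), this also completes the proof of the tree-width part of Theorem \ref{t:weaklysup}.
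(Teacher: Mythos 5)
Your first route is exactly the paper's intended argument: the corollary is stated without proof precisely because it follows immediately from Theorem \ref{t:rw} together with Oum's inequality $\textrm{rw}(G) \le \textrm{tw}(G)+1$ (Theorem \ref{t:rwtw}), and your application of it is correct. Your alternative direct argument via expansion, minor-monotonicity of tree-width and Lemma \ref{l:twsep} is also sound, but it is not needed and is not what the paper does.
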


\begin{proof}[Proof of Theorem \ref{t:weaklysup}]
The lower bounds follow immediately from Theorem \ref{t:rw} and Corollary \ref{c:tw}.

For the upper bounds, we note that it is relatively easy to see that the tree-width of a connected graph can be bounded from above by one plus its \emph{excess}, the number of edges minus the number of vertices, see \cite[Proposition 4.3]{Serra}. Indeed, every graph with excess $-1$ is a tree, which has tree-width one, and adding a single edge to a graph can increase the tree-width by at most one.

However, it is known, see for example \cite[Theorem 5.13]{Janson}, that if $\epsilon = \epsilon(n)>0$ is such that $\epsilon^3n \rightarrow \infty$ and $\epsilon = o(1)$, and $p=\frac{1+\epsilon}{n}$, then whp the excess of the largest component $L_1$ of $G(n,p)$ is $\Theta(\epsilon^3 n)$, and every other component has excess at most $0$. In particular, it follows that whp
\begin{align*}
\textrm{tw}(G(n,p)) &= \max\{ \textrm{tw}(K) \colon K \text{ a component of } G(n,p) \}\\ 
&\leq \max\{ \textrm{excess}(K) \colon K \text{ a component of } G(n,p) \} \\
&\leq \textrm{excess}(L_1) = O(\epsilon^3 n).
\end{align*}

In particular, it follows that whp $\textrm{tw}(G(n,p)) = O\left(\epsilon^3 n\right)$, and so, by Theorem \ref{t:rwtw}, the corresponding upper bound also holds for the rank-width. 
\end{proof}

\section{Discussion}\label{s:discussion}
As mentioned in the introduction, it is relatively easy to see that the proof of Theorem \ref{t:rankwidthdirect} also works without changes in the weakly supercritical regime. However, for the proof of Theorem \ref{t:treewidthdirect} we used Theorem \ref{t:largetree}, which was only stated for $\epsilon >0$ constant. Nevertheless, we note that a similar result can be seen to hold in the weakly supercritical regime, although since the bounds in Theorem \ref{t:treewidthdirect} are superseded by the bounds in Theorem \ref{t:weaklysup} in this regime, and, whilst the idea is simple, the technical overhead of implementing it is high, we will just give a short sketch of the idea below. Using such a result it can again be seen that the proof of Theorem \ref{t:treewidthdirect} also works in the weakly supercritical regime.

Indeed, standard results on the distribution of the kernel and $2$-core of the largest component of $G(n,p)$ when $p = \frac{1+\delta}{n}$ with $\delta^3 n \to \infty$ and $\delta=o(1)$ imply that whp there is an almost spanning connected subgraph $\tilde{H}$ of the $2$-core, which has order $\Omega(\delta^2 n)$ and maximum degree three. 

Furthermore, if we let $L_1$ be the giant component of $G(n,p)$ and $C$ be the $2$-core of $L_1$, then it follows from work of Ding, Kim, Lubetzky and Peres \cite{Ding} that the graph $R:= L_1 - E(C)$ is contiguous to a graph built by choosing, for each vertex $v \in C$ independently, a tree $T_v$ which is distributed as a Poisson$(1-\delta')$ Galton-Watson tree, where $\delta'$ is the smallest solution to $(1-\delta')e^{-(1-\delta')} = (1+\delta)e^{1+\delta}$, and satisfies $\delta' = 1 - \delta + O(\delta^2)$. Hence, standard bounds on the distribution of Galton-Watson trees imply that with a positive probability the order of $T_v$ is $\Omega\left( \frac{1}{\delta}\right)$, and whp the maximum degree in $T_v$ is $O\left( \frac{1}{\log \frac{1}{\delta}} \right)$, and so whp a positive proportion of the trees $T_v$ with roots in $\tilde{H}$ will satisfy these two properties. Taking the union of these trees together with $\tilde{H}$ itself will lead to the claimed subgraph.

Whilst Theorem \ref{t:weaklysup} is optimal in terms of its dependence on $\epsilon$ and $n$, it seems unlikely that Theorems \ref{t:treewidthdirect} and \ref{t:rankwidthdirect} are. In fact, it seems reasonable to expect that the methods we use to prove Theorem \ref{t:weaklysup} should extend to prove a similar bound in the barely supercritical regime. Indeed, it is likely that the methods of {\L}uczak \cite{Luczak90} can give bounds of a similar nature to Lemma \ref{l:luczak} on the likely degree sequence of the $2$-core of the largest component of $G(n,p)$, in particular that the number of edges incident with vertices of degree four or larger is much smaller than the number of vertices of degree three, which is of order $\Theta(\epsilon^3n)$. It would then follow by the same arguments as in Section \ref{s:weakly} that whp $\textrm{rw}(G(n,p)) = \Omega(\epsilon^3 n)$. However, since the technical details in \cite{Luczak90} are quite involved, we leave this as an open question, and note that perhaps the main interest in Theorems \ref{t:treewidthdirect} and \ref{t:rankwidthdirect} lies in the simplicity and directness of the proofs, rather than their asymptotic dependence on $\epsilon$.

\begin{question}
Let $\epsilon >0$ be a sufficiently small constant and let $p=\frac{1+\epsilon}{n}$. Is there a constant $c>0$ (independent of $n$ and $\epsilon$) such that whp
\[
\textrm{tw}(G(n,p)) \geq c \epsilon^3 n?
\]
\end{question}
We note that, in the barely supercritical regime there is also a natural upper bound on tw$(G(n,p))$ of $O\left(\epsilon^3 n\right)$ given by the likely excess of the giant component, together with the fact that whp all other components are trees or unicyclic.

For large enough $\epsilon$, the fact that rank- and tree-width are vertex-Lipschitz functions imply that these parameters are tightly concentrated about their mean, for example using the Azuma-Hoeffding inequality, see e.g., \cite{Frieze}.
However, for smaller $\epsilon$ is it not clear if these parameters are concentrated in a range of values of size $o\left(\epsilon^3 n\right)$. It would also be interesting to know, if the parameters are tightly concentrated, what the correct leading constant should be.

We note that the upper and lower bounds we get for the rank- and tree-width of $G(n,p)$ in the weakly supercritical regime are not \emph{so} far apart, differing by less than a factor of $100$. Explicitly, using the bounds on $\alpha$ and $\delta$ from Theorem \ref{t:weakexpander}, and the fact that whp the excess of $G(n,p)$ is $(1+o(1)) \frac{2}{3} \epsilon^3 n$ we can deduce that whp
\[
0.02\leq \frac{\textrm{tw}(G)}{\epsilon^3 n} , \frac{\textrm{rw}(G)}{\epsilon^3 n} \leq 1.34.
\]

\begin{question}
Let $\epsilon=\epsilon(n)$ be such that $\epsilon=o(1)$ and $\epsilon^3 n \to \infty$. Is it the case that whp
\[
\textrm{tw}(G(n,p)) = (1+o(1)) c_t \epsilon^3 n \qquad \text{   and   }\qquad  \textrm{rw}(G(n,p)) = (1+o(1)) c_r \epsilon^3 n 
\]
for some constants $c_t, c_r >0$ and can we determine the value of the constants $c_t$ and $c_r$?
\end{question}

Spencer and T\'{o}th \cite{SpencerToth} used the lower bound on the bisection width of a subgraph of $G(n,p)$ from \eqref{e:bisectionwidth} to bound the \emph{crossing number} of $G(n,p)$ in the barely supercritical regime. We say a \emph{drawing} of a graph $G$ is a mapping which assigns to each vertex of $G$ a point in the plane and to each edge a continuous arc, such that no arc passes through a vertex and no three arcs meet at a point. A \emph{crossing point} in a drawing is a point where two arcs meet. Then, the crossing number $\CR (G)$ of a graph $G$ is the minimum number of crossing points in a drawing of $G$.

Leighton \cite{L84} used the well-known Lipton-Tarjan planar separator theorem \cite{LT79} to give a connection between the crossing number and the bisection width $b(G)$ of a graph. Explicitly, it was shown by Pach, Shahrokhi and Szegedy \cite[Theorem 2.1]{PSS96} that for every graph $G$
\begin{equation}\label{e:CRB}
b(G) \leq 2 \sqrt{16 \CR (G) + \sum_{v\in V(G)} d(v)^2}.
\end{equation}

Using this and \eqref{e:bisectionwidth}, Spencer and T\'{o}th \cite{SpencerToth} showed the following:
\begin{theorem}[\cite{SpencerToth}]
Let $d > 1$ and let $p=\frac{d}{n}$. Then
\[
\mathbb{E}\big(\CR (G(n,p))\big) = \Omega\left(n^2\right).
\]
\end{theorem}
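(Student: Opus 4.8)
The plan is to deduce the theorem almost immediately from the bisection-width bound \eqref{e:bisectionwidth} together with Leighton's inequality \eqref{e:CRB}. First I would fix $d>1$ and set $\epsilon = d-1 >0$. By \eqref{e:bisectionwidth}---or rather its easy adaptation to an arbitrary constant $d>1$, which is exactly the estimate Spencer and T\'{o}th use---whp $G(n,p)$ contains an induced subgraph $H = G(n,p)[V(T)]$ on $m = \Theta(\epsilon n) = \Theta(n)$ vertices with $b(H) = \Omega(n)$. Since any drawing of a graph restricts to a drawing of each of its subgraphs, the crossing number is monotone under taking subgraphs, so $\CR(G(n,p)) \geq \CR(H)$ and it suffices to bound $\CR(H)$ from below.

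Next I would control the quadratic degree sum appearing in \eqref{e:CRB}, for which a crude estimate is enough: whp $G(n,p)$ has maximum degree $O(\log n)$ and $O(n)$ edges, so whp
\[
\sum_{v\in V(G)} d(v)^2 \leq \Delta\big(G(n,p)\big) \cdot \sum_{v\in V(G)} d(v) = O(n\log n) = o(n^2).
\]
As $H$ is a subgraph of $G(n,p)$, on this event $\sum_{v\in V(H)} d_H(v)^2 \leq \sum_{v\in V(G)} d(v)^2 = o(n^2)$ as well.

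Finally, working on the event---which holds whp---that both of these bounds are valid, I would apply \eqref{e:CRB} to $H$ and square it to get
\[
b(H)^2 \leq 4\Big(16\,\CR(H) + \sum_{v\in V(H)} d_H(v)^2\Big) \leq 64\,\CR(H) + o(n^2);
\]
since the left-hand side is $\Omega(n^2)$, this forces $\CR(H) = \Omega(n^2)$, and hence $\CR(G(n,p)) \geq \CR(H) = \Omega(n^2)$ whp. As $\CR(\cdot)$ is always non-negative, a whp lower bound of the form $\CR(G(n,p)) \geq c'n^2$ immediately yields $\mathbb{E}\big(\CR(G(n,p))\big) \geq (1-o(1))\,c'n^2 = \Omega(n^2)$, as required.

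I do not expect a genuine obstacle here; the statement is essentially a corollary of \eqref{e:bisectionwidth} and \eqref{e:CRB}, and this is precisely the argument of Spencer and T\'{o}th. The only points needing any care are checking that $\sum_v d(v)^2 = o(n^2)$ whp, so that the $16\,\CR(H)$ term dominates the right-hand side of \eqref{e:CRB}, and combining the handful of whp events (the existence of $H$ with large bisection width, and the degree-sum bound) on a common probability space before passing from the whp estimate to the bound on the expectation.
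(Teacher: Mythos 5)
Your proposal is correct and follows essentially the same route the paper attributes to Spencer and T\'{o}th: combining the bisection-width lower bound \eqref{e:bisectionwidth} with the inequality \eqref{e:CRB}, exactly as in the paper's proof of its weakly supercritical analogue (where the $3$-regular expander makes the degree term trivial). The details you add --- the bound $\sum_v d(v)^2 = O(n\log n) = o(n^2)$ whp, monotonicity of $\CR$ under subgraphs, and passing from the whp lower bound to the expectation via non-negativity --- are the natural and correct way to fill in the argument the paper only sketches.
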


We note that, using some of our intermediary results we can extend this into the weakly supercritical regime.

\begin{theorem}
Let $\epsilon:=\epsilon(n) > 0$ be such that $\epsilon^3 n \to \infty$ and $\epsilon = o(1)$, and let $p=\frac{1+\epsilon}{n}$. Then whp
\[
\CR (G(n,p)) = \Omega\left(\epsilon^6 n^2\right).
\]
\end{theorem}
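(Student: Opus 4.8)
The plan is to combine the topological-minor description of the giant component from Theorem \ref{t:weakexpander} with Leighton's crossing-number--bisection-width inequality \eqref{e:CRB}. First I would recall two standard facts about the crossing number: it is unchanged by subdividing edges, and it is monotone under taking subgraphs; consequently, if a subdivision of a graph $H$ occurs as a subgraph of $G$ --- in particular, if $H$ is an induced topological minor of $G$ --- then $\CR(H) \le \CR(G)$. By Theorem \ref{t:weakexpander}, there are constants $\alpha,\delta>0$ such that whp $G(n,p)$ contains, as an induced topological minor, a graph $H$ with $|V(H)| \ge \delta \epsilon^3 n =: m$ which is a $3$-regular $\alpha$-expander. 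Hence whp $\CR(G(n,p)) \ge \CR(H)$, and it suffices to show $\CR(H) = \Omega(m^2)$.

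Next I would lower-bound the bisection width of $H$. Since $H$ is $3$-regular, any partition $(A,B)$ of $V(H)$ with $\tfrac{m}{3}\le |A|,|B|\le \tfrac{2m}{3}$ satisfies $\min\{d_H(A),d_H(B)\} = 3\min\{|A|,|B|\} \ge m$, so from $\Phi(H)\ge\alpha$ we get $e_H(A,B)\ge \alpha m$, and therefore $b(H)\ge \alpha m$. Also $\sum_{v\in V(H)} d_H(v)^2 = 9m$. Substituting these into \eqref{e:CRB} gives
\[
\alpha m \le b(H) \le 2\sqrt{16\,\CR(H) + 9m},
\]
and rearranging yields $\CR(H) \ge \frac{\alpha^2 m^2}{64} - \frac{9m}{16}$. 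Since $\epsilon^3 n\to\infty$ we have $m\ge \delta\epsilon^3 n\to\infty$, so for $n$ large the quadratic term dominates and $\CR(H) \ge \frac{\alpha^2}{128} m^2 \ge \frac{\alpha^2\delta^2}{128}\,\epsilon^6 n^2$. Combined with $\CR(G(n,p))\ge\CR(H)$, this gives $\CR(G(n,p)) = \Omega(\epsilon^6 n^2)$ whp.

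I do not expect a serious obstacle: the substantive inputs --- Theorem \ref{t:weakexpander} and \eqref{e:CRB} --- are already available, and the remaining work is the elementary behaviour of $\CR$ under subdivision and subgraphs together with the arithmetic above. The one point that genuinely matters is the choice of witness graph: one cannot simply feed the induced subgraph $H=G(n,p)[V(T)]$ from the proof of Theorem \ref{t:rankwidthdirect} into \eqref{e:CRB}, because its degree-squared sum $\sum_v d_H(v)^2$ can be of order $n$, which need not be negligible against $\epsilon^6 n^2$ when $\epsilon^3 n\to\infty$ slowly. It is therefore essential to use a bounded-degree expanding substructure, for which the $3$-regular $\alpha$-expander of Theorem \ref{t:weakexpander} is tailor-made, ensuring that $\sum_v d_H(v)^2 = 9m = O(\epsilon^3 n) = o(\epsilon^6 n^2)$, since $\epsilon^6 n^2/(\epsilon^3 n) = \epsilon^3 n \to \infty$.
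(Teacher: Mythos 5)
Your proposal is correct and follows essentially the same route as the paper: both invoke Theorem \ref{t:weakexpander} to obtain a $3$-regular $\alpha$-expander $H$ on $m \ge \delta\epsilon^3 n$ vertices as an induced topological minor, use invariance of $\CR$ under subdivision and monotonicity under subgraphs to get $\CR(G(n,p)) \ge \CR(H)$, bound $b(H) \ge \alpha m$ from the expander property, and conclude via \eqref{e:CRB}. Your additional remark on why a bounded-degree witness is needed (to keep $\sum_v d(v)^2$ negligible) is a sensible observation, but the argument itself matches the paper's.
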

\begin{proof}
We first note that the crossing number is an increasing graph property, and furthermore it is clear that if $H$ is a subdivision of $G$, then $\CR (H) = \CR (G)$. 
%\color{blue} Indeed, given a drawing of $G$ with $k$ crossings we can easily obtain a drawing of $H$ with $k$ crossings by subdiving the arcs in the drawing, and similarly given a drawing of $H$ with $k$ crossing we can obtain a drawing of $G$ with $k$ crossings via the reverse of this process. \color{black}

Hence, by Theorem \ref{t:weakexpander} there exist $\alpha,\delta >0$ such that whp $\CR (G(n,p)) \geq \CR (H)$ where $H$ is a $3$-regular $\alpha$-expander on $m \geq \delta \epsilon^3 n$ vertices. In particular, it follows from the definition of an $\alpha$-expander that $b(H) \geq \alpha m$.

Therefore, since $H$ is $3$-regular, it follows from \eqref{e:CRB} that
\[
\CR (G) \geq \CR(H) \geq \frac{ b(H)^2 - 4  \sum_{v\in V(H)} d(v)^2}{16} =  \Omega\left(m^2\right) = \Omega\left(\epsilon^6 n^2\right).
\]
\end{proof}

%Finally, we note that Perarnau and Serra \cite{Serra} also bound from below the tree-width of random $d$-regular graphs $G(n,d)$ for sufficiently large $d$. 
%\begin{theorem}[{\cite[Proposition 4.6]{Serra}}]
%There is a constant $d_0$ such that for every $d \geq d_0$ whp the random $d$-regular graph $G(n,d)$ has linear tree-width.
%\end{theorem}
%
%Their proof essentially shows that such graphs are whp $\alpha(d)$-expanders for some constant $\alpha(d)$ by using the Cheeger bound, which bounds the expansion of the graph in terms of its second largest eigenvalue. They then show that all $\alpha(d)$-expanders have tree-width linear in their size.
%
%This spectral argument is only effective for sufficiently large $d$, however we note that Bollob\'{a}s \cite{Bollobasisometric} showed (see Theorem \ref{t:regularexpander}), that for each $d \geq 3$ there exists an $\alpha =\alpha(d)$ such that whp $G(n,d)$ is an $\alpha$-expander. The exact same proof as in \cite[Proposition 4.6]{Serra} then shows the following.
%
%\begin{corollary}
%For every $d \geq 3$ whp the random $d$-regular graph $G(n,d)$ has linear tree-width.
%\end{corollary}
\section*{Acknowledgements}
The first author was supported in part by FWF I3747 and W1230, the second author by FWF P36131 and the third author by FWF W1230. For the purpose of open access, the authors have applied a CC BY public copyright licence to any Author Accepted Manuscript version arising from this submission.

 \bibliographystyle{plain}
\bibliography{References}

\begin{appendix} 
\section{Proof of Theorem \ref{t:largetree}}\label{a:largetree}
Let $G:= G(n,p)$ and let $K:= 4 \log \frac{1}{\delta}$. The basic idea is to analyse a restricted breadth first search process on the graph, where we limit each vertex to have at most $K$ neighbours. In this case, Lemma \ref{l:restricted} will imply that we expect each vertex to have more than one neighbour, and so we should expect this process to grow to a large size. In practice, we have to first build a large tree `by hand', so as to guarantee that the correct growth happens with high probability. This strategy to build a large bounded degree tree already appears in the work of Erde, Kang and Krivelevich \cite{Erde}, where they use similar arguments to build such a tree with large excess in an arbitrary random subgraph.
	
	Throughout the process we will maintain a set of vertices $X$ which will include the vertices of the tree we are building, together with a small set of vertices which we have discarded during an initial step. We will work under the assumption that $|X| \leq \frac{\delta n}{4}$, and later verify that this holds throughout our process. Let us write $U := [n] \setminus X$.
	
 In the initial step, we will build a \emph{partial binary tree} of order $N_1 = 4  \log \log \log n + 1$ via a greedy process. By a partial binary tree we mean a rooted tree, rooted at a vertex $v$ of degree two, in which all other vertices have degree three or one, such that there is some integer $\ell$ such that every leaf is at distance $\ell$ or $\ell-1$ from $v$. Note that, since there are at least $2^{\ell-1}$ leaves in such a tree, and at most $2^{\ell+1}- 1$ vertices, there will be at least $N_2 :=  \log \log \log n $ leaves in such a partial binary tree of order $N_1$.
 
 To build our partial binary tree of order $N_1$, we will make a series of \emph{attempts}. In the $i$th attempt we start by picking an arbitrary root vertex $v_i \in [n] \setminus X$ of a tree $\hat{T}_i$ and adding it to $X$, and we grow $\hat{T}_i$ by recursively exposing the neighbours in $U$ of some leaf $w$ in $\hat{T}_i$ at a minimal depth. If $w$ has at least two neighbours, we choose two arbitrarily and add them to $\hat{T}_i$ as children of $w$, and to the set $X$. If $w$ has less than two neighbours, the attempt is considered a \emph{failure} and we choose an arbitrary set of size $N_1 - |V(\hat{T}_i)|$ in $U$, add it to $X$ and begin the $(i+1)$th attempt. Otherwise, when $|V(\hat{T}_i)| = N_1$ then we consider the attempt a \emph{success} and begin the $(i+1)$th attempt.
 
Whenever we expose the neighbours of a vertex $w$ there are $|U| \geq \left(1-\frac{\delta}{4} \right)n$ many possible neighbours and so the probability that we do not find at least two neighbours is at most
\begin{align*}
		\mathbb{P}\left( \text{Bin}\left(\left(1-\frac{\delta}{4} \right)n , p \right) < 2 \right) &= (1-p)^{\left(1-\frac{\delta}{4} \right)n} + \left(1-\frac{\delta}{4} \right)np(1-p)^{\left(1-\frac{\delta}{4} \right)n-1}\\
		&\leq \left(1 - p + \left(1-\frac{\delta}{4} \right)\left(1+\delta\right)\right)\text{exp}\left(-(1+\delta)\left(1-\frac{\delta}{4} \right) + p\right)\\
		&\leq \left(2 + \frac{3 \delta}{4}\right) e^{-1-\frac{\delta}{2}}=: 1-\gamma < 1.
		\end{align*}
 It follows that the probability that an attempt is successful is at least $\gamma^{N_1}$. In particular, since these probabilities are independent for each attempt, if we make $k = \gamma^{-{N_1}}N_1$ attempts, then whp there will be at least one successful attempt, that is, some $\hat{T}_j$ such that $|V(\hat{T}_j)| = N_1$. Note that, at the end of this initial stage $|X| \leq \gamma^{-{N_1}}N_1^2 = o(n)$.
 
 Let us set $T_0 = \hat{T}_j$ and $S_0$ to be the set of leaves of $T_0$. Note that $|S_0| \geq N_2$. We will build a sequence of trees $T_i$ together with a specified set of leaves $S_i \subseteq V(T_i)$ such that for each $i \geq 0$
 \begin{itemize}
    \item $T_{i+1} \supseteq T_{i}$;
     \item $\Delta(T_i) \leq K$;
     \item $|S_{i+1}| \geq \left(1 + \frac{\delta}{2}\right)|S_{i}|$.
 \end{itemize}

To do so, let $|S_i| = s_i$ and let us enumerate $S_i = \{ v_1,v_2, \ldots v_{s_i} \}$. We sequentially expose the neighbours of $v_j$ in $U$, the vertices which are neither discarded, nor in the current tree, for each $1 \leq j \leq s_i$. We choose an arbitrary subset $N_j \subseteq N(v_j) \cap U$ of size $\eta_j:= \min \{ |N(v_j) \cap U|, K\}$, add these vertices as children of $v_j$ in the tree $S_{i+1}$, and add them to $X$.

Note that, as long as the tree we are building has not yet grown to size $\frac{\delta n}{8}$, $|X| \leq \frac{\delta n}{4}$ and so the random variables $(\eta_1, \eta_2, \ldots, \eta_{s_i})$ are stochastically dominated by an i.i.d sequence of random variables $(Y_1, Y_2 \ldots, Y_{s_i})$ where each $Y_j \sim Y$ with $Y= \min \left \{ \text{Bin}\left( \left(1-\frac{\delta}{4}\right)n, p\right), K \right\}$.

Then, by Lemma \ref{l:restricted}
\begin{align*}
    \mathbb{E}(Y)\geq \left(1-\frac{\delta}{4}\right)np-K2^{-K}=\left(1-\frac{\delta}{4}\right)(1+\delta)-4\delta^4\log \frac{1}{\delta}=1+\frac{3\delta}{4}-\frac{\delta^2}{4}-4\delta^4\log \frac{1}{\delta}\geq 1+\frac{\delta}{2},
\end{align*}
if $\delta$ is sufficiently small. Hence, $\mathbb{E}(|S_{i+1}|) \geq |S_i|\mathbb{E}(Y) \geq \left(1+\frac{\delta}{2}\right)|S_i|$ and so by Lemma \ref{l:hoeffding}
\[
\mathbb{P}\left( |S_{i+1}| \leq \left(1+\frac{\delta}{4}\right)|S_i| \right) \leq 2\exp\left(- \frac{ \delta^2|S_i|}{16K^2} \right).
\]

Therefore, the probability that, whilst $|X| \leq \frac{\delta}{4}n,$ there is some $i$ such that $|S_{i+1}| \leq \left(1+\frac{\delta}{4}\right)|S_i|$ is at most
\[
\sum_{i \colon |T_i| \leq \frac{\delta}{8}n} 2\exp\left(- \frac{ \delta^2|S_i|}{16K^2} \right) \leq \sum_{t=N_2}^{\infty}2\exp\left(- \frac{ \delta^2t}{16K^2} \right) = o(1).
\]

In particular, whp there is some $j$ such that $|T_j| \geq \frac{\delta}{8}$, and this tree satisfies the conclusion of the lemma.
\end{appendix}
\end{document}